\documentclass[10pt,english]{article}
\usepackage[OT1]{fontenc} 
\usepackage[latin1]{inputenc}
\usepackage{graphicx}
\usepackage{color}
\usepackage{dsfont}
\usepackage{amsmath,amsfonts,amsthm,amssymb,amscd,amssymb}
\usepackage{hyperref}
\usepackage{geometry}
\usepackage{amsmath,amssymb}
\usepackage{mathrsfs}
\usepackage{mathtools}

\usepackage{graphicx}

\usepackage{stmaryrd}

\usepackage{multibib}

\usepackage{multicol}
\usepackage{bbm}

\usepackage{calc}

\usepackage{float}
\usepackage{verbatim}

\usepackage{enumerate}

\usepackage{subfig}
\usepackage{color}
\usepackage{framed}
\usepackage{ulem}

\usepackage{relsize}


\newcommand{\R}{\mathbb{R}}

\newcommand{\p}{\mathbb{P}}
\newcommand{\E}{\mathbb{E}}

\newcommand{\X}{\mathbb{X}}
\newcommand{\Y}{\mathbb{Y}}

\newcommand{\dd}{{d}}

\newcommand{\norm}[1]{\left\Vert #1\right\Vert}

\newcommand{\argmax}{\operatornamewithlimits{argmax}}

\newtheorem{thm}{Theorem}
\newtheorem{lem}[thm]{Lemma}
\newtheorem{Def}[thm]{Definition}
\newtheorem{prop}[thm]{Proposition}

\newtheorem{cor}[thm]{Corollary}
\newtheorem{rem}[thm]{Remark}

\newcommand{\proofof}{\text{of }}

\bibliographystyle{plain}

\begin{document}
\title{Stability and Minimax Optimality of Tangential Delaunay Complexes for Manifold Reconstruction
}

\date{\vspace{-5ex}}
\author{
Eddie Aamari \and Cl\'ement Levrard 
}


%

\maketitle

\begin{abstract}
\noindent We consider the problem of optimality in manifold reconstruction. A random sample $\mathbb{X}_n = \left\{X_1,\ldots,X_n\right\}\subset \R^D$  composed of points close to a $d$-dimensional submanifold $M$, with or without outliers drawn in the ambient space, is observed. Based on the Tangential Delaunay Complex \cite{Boissonnat14}, we construct an estimator $\hat{M}$ that is ambient isotopic and Hausdorff-close to $M$ with high probability. The estimator $\hat{M}$ is built from existing algorithms. In a model with additive noise of small amplitude, we show that this estimator is asymptotically minimax optimal for the Hausdorff distance over a class of submanifolds satisfying a reach constraint. Therefore, even with no \textit{a priori} information on the tangent spaces of $M$, our estimator based on Tangential Delaunay Complexes is optimal. This shows that the optimal rate of convergence can be achieved through existing algorithms. A similar result is also derived in a model with outliers.
A geometric interpolation result is derived, showing that the Tangential Delaunay Complex is stable with respect to noise and perturbations of the tangent spaces. In the process, a decluttering procedure and a tangent space estimator both based on local principal component analysis (PCA) are studied.
\end{abstract}


\section{Introduction}
Throughout many fields of applied science, data in $\R^D$ can naturally be modeled as lying on a $d$-dimensional submanifold $M$.
As $M$ may carry a lot of information about the studied phenomenon, it is then natural to consider the problem of either approximating $M$ geometrically, recovering it topologically, or both from a point sample $\X_n = \left\{X_1,\ldots,X_n\right\}$.
It is of particular interest in high codimension ($d \ll D$) where it can be used as a preliminary processing of the data for reducing its dimension, and then avoiding the curse of dimensionality.
This problem is usually referred to as \textit{manifold reconstruction} in the computational geometry community, and rather called \textit{set/support estimation} or \textit{manifold learning} in the statistics literature.

The computational geometry community has now been active on manifold reconstruction for many years, mainly in deterministic frameworks.
In dimension $3$, \cite{DeyBook} provides a survey of the state of the art. In higher dimension, the employed methods rely on variants of the ambient Delaunay triangulation \cite{Dey05,Boissonnat14}. The geometric and topological guarantees are derived under the assumption that the point cloud --- fixed and nonrandom --- densely samples $M$ at scale $\varepsilon$, with $\varepsilon$ small enough or going to $0$.

In the statistics literature, most of the attention has been paid to approximation guarantees, rather than topological ones. The approximation bounds are given in terms of the sample size $n$, that is assumed to be large enough or going to infinity. 
To derive these bounds, a broad variety of assumptions on $M$ have been considered.
For instance, if $M$ is a bounded convex set and $\X_n$ does not contain outliers, a natural idea is to consider the convex hull $\hat{M} = Conv(\X_n)$ to be the estimator. $Conv(\X_n)$ provides optimal rates of approximation for several loss functions \cite{Tsybakov95,Dumbgen96}. These rates depend crudely on the regularity of the boundary of the convex set $M$. In addition, $Conv(\X_n)$ clearly is ambient isotopic to $M$ so that it has both good geometric and topological properties.
Generalisations of the notion of convexity based on rolling ball-type assumptions such as $r$-convexity and reach bounds \cite{Cuevas2004,Genovese12} yield rich classes of sets with good geometric properties. In particular, the reach, as introduced by Federer \cite{federer1959}, appears to be a key regularity and scale parameter {\color{black}{\cite{Chazal06,Genovese12,Maggioni16}}}.

This paper mainly follows up the two articles \cite{Boissonnat14,Genovese12}, both dealing with the case of a $d$-dimensional submanifold $M \subset \R^D$ with a reach regularity condition and where the dimension $d$ is known.

On one hand, \cite{Boissonnat14} focuses on a deterministic analysis and proposes a provably faithful reconstruction. The authors introduce a weighted Delaunay triangulation restricted to tangent spaces, the so-called Tangential Delaunay Complex. This paper gives a reconstruction up to ambient isotopy with approximation bounds for the Hausdorff distance along with computational complexity bounds. This work provides a simplicial complex based on the input point cloud and tangent spaces. However, it lacks stability up to now, in the sense that the assumptions used in the proofs of \cite{Boissonnat14} do not resist ambient perturbations. Indeed, it heavily relies on the knowledge of the tangent spaces at each point and on the absence of noise.

On the other hand, \cite{Genovese12} takes a statistical approach in a model possibly corrupted by additive noise, or containing outlier points. The authors derive an estimator that is proved to be minimax optimal for the Hausdorff distance $\dd_{H}$. Roughly speaking, minimax optimality of the proposed estimator means that it performs best in the worst possible case up to numerical constants, when the sample size $n$ is large enough.
Although theoretically optimal, the proposed estimator appears to be intractable in practice. At last, \cite{Maggioni16} proposes a manifold estimator based on local linear patches that is tractable but fails to achieve the optimal rates. 

\subsection*{Contribution}
Our main contributions (Theorems \ref{main_result_noob_nonoise}, \ref{main_result_noob_noise} and \ref{theoretical_rate}) make a two-way link between the approaches of \cite{Boissonnat14} and~\cite{Genovese12}.

From a geometric perspective, Theorem \ref{main_result_noob_nonoise} shows that the Tangential Delaunay Complex of \cite{Boissonnat14} can be combined with  local PCA to provide a manifold estimator that is optimal in the sense of \cite{Genovese12}.
This remains possible even if data is corrupted with additive noise of small amplitude.
Also, Theorems \ref{main_result_noob_noise} and \ref{theoretical_rate} show that, if outlier points are present (clutter noise), the Tangential Delaunay Complex of \cite{Boissonnat14} still yields the optimal rates of \cite{Genovese12}, at the price of an additional decluttering procedure.

 From a statistical point of view, our results show that the optimal rates described in \cite{Genovese12} can be achieved by a tractable estimator $\hat{M}$  that (1) is a simplicial complex of which vertices are the data points, and (2) such that $\hat{M}$ is ambient isotopic to $M$ with high probability.

In the process, a stability result for the Tangential Delaunay Complex (Theorem \ref{tdc_stability}) is proved.
Let us point out that this stability is derived using an interpolation result (Theorem \ref{tangential_perturbation_thm}) which is interesting in its own right. 
Theorem \ref{tangential_perturbation_thm} states that 
if a point cloud $\mathcal{P}$ lies close to a submanifold $M$, and that estimated tangent spaces at each sample point are given, then there is a submanifold $M'$ (ambient isotopic, and close to $M$ for the Hausdorff distance) that interpolates $\mathcal{P}$, with $T_p M'$ agreeing with the estimated tangent spaces at each point $p \in \mathcal{P}$.
Moreover, the construction can be done so that the reach of $M'$ is bounded in terms of the reach of $M$, provided that $\mathcal{P}$ is sparse, points of $\mathcal{P}$ lie close to $M$, and error on the estimated tangent spaces is small.
Hence, Theorem \ref{tangential_perturbation_thm} essentially allows to consider a noisy sample with estimated tangent spaces as an exact sample with exact tangent spaces on a proxy submanifold.
This approach can provide stability for any algorithm that takes point cloud and tangent spaces as input, such as the so-called \textit{cocone} complex \cite{Dey05}.

\section*{Outline}
This paper deals with the case where a sample $\X_n = \left\{X_1,\ldots,X_n\right\}\subset \R^D$ of size $n$ is randomly drawn on/around $M$. 
First, the statistical framework is described (Section \ref{statistical_model_section}) together with minimax optimality (Section \ref{subsec:minimax_risk}). Then, the main results are stated (Section \ref{subsec:main_results}).

Two models are studied, one where $\X_n$ is corrupted with additive noise, and one where $\X_n$ contains outliers.
We build a simplicial complex $\hat{M}_{\mathtt{TDC}}(\X_n)$ ambient isotopic to $M$ and we derive the rate of approximation for the Hausdorff distance $\dd_{H}(M,\hat{M}_{\mathtt{TDC}})$, with bounds holding uniformly over a class of submanifolds satisfying a reach regularity condition. The derived rate of convergence is minimax optimal if the amplitude $\sigma$ of the additive noise is small. 
With outliers, similar estimators $\hat{M}_{\mathtt{TDC \delta}}$ and $\hat{M}_{\mathtt{TDC +}}$ are built.
$\hat{M}_{\mathtt{TDC}}$, $\hat{M}_{\mathtt{TDC \delta}}$ and $\hat{M}_{\mathtt{TDC +}}$ are based on the Tangential Delaunay Complex (Section \ref{tangential_delaunay_complex_section}), 
that is first proved to be stable (Section \ref{stability_section}) via an interpolation result. 
A method to estimate tangent spaces and to remove outliers based on local Principal Component Analysis (PCA) is proposed (Section \ref{tangent_space_estimation_and_denoising_procedure}). We conclude with general remarks and possible extensions (Section \ref{conclusion}).
{\color{black}{For ease of exposition, all the proofs are placed in the appendix.}}

\section*{Notation}
In what follows, we consider a compact $d$-dimensional submanifold without boundary $M \subset \R^D$ to be reconstructed.
For all $p \in M$, $T_p M$ designates the tangent space of $M$ at $p$. Tangent spaces will either be considered vectorial or affine depending on the context.
The standard inner product in $\R^D$ is denoted by $\langle \cdot,\cdot \rangle$ and the Euclidean distance $\norm{\cdot}$. We let $\mathcal{B}(p,r)$ denote the closed Euclidean ball of radius $r>0$ centered at $p$. We let $\wedge$ and $\vee$ denote respectively the minimum and the maximum of real numbers.
As introduced in \cite{federer1959}, the reach of $M$, denoted by $\mathrm{reach}(M)$ is the maximal offset radius for which the projection $\pi_M$ onto $M$ is well defined.
Denoting by $\dd(\cdot,M)$ the distance to $M$, the \textit{medial axis} of $M$ $\mathrm{med}(M) = \{x \in \R^D | \exists a \neq b \in M, \norm{x-a} = \norm{x-b} = \dd(x,M) \}$ is the set of points which have at least two nearest neighbors on $M$. Then, $\mathrm{reach}(M) = \underset{p \in M}{\inf} \dd(p,\mathrm{med}(M))$. We simply write $\pi$ for $\pi_M$ when there is no possibility of confusion.
For any smooth function $\Phi: \R^D \rightarrow \R^D$, we let $\dd_a \Phi$ and $\dd_a^2 \Phi$ denote the first and second order differentials of $\Phi$ at $a\in \R^D$.
For a linear map $A$, $A^t$ designates its transpose. Let $\norm{A}_\mathrm{op}= {\sup_x} \frac{\norm{A x}}{\norm{x}}$ and $\norm{A}_\mathcal{F} = \sqrt{\mathrm{trace}\left(A^t A\right)}$ denote respectively the operator norm induced by the Euclidean norm and the Frobenius norm. 
The distance between two linear subspaces $U,V \subset \R^D$ of the same dimension is measured by the sine {\color{black}{$\angle(U,V) = \underset{u\in U}{\max} ~ \underset{v' \in V^{\perp}}{\max} \dfrac{\langle u,v' \rangle}{\norm{u} \norm{v'}} = \norm{\pi_U - \pi_V}_\mathrm{op}$}} of their largest principal angle. 
The Hausdorff distance between two compact subsets $K,K'$ of $\R^D$ is denoted by $\dd_{H}(K,K') = \sup_{x \in \R^D} \left|d(x,K) - d(x,K') \right|$. Finally, we let $\cong$ denote the ambient isotopy relation in $\R^D$.

\noindent Throughout this paper, $C_\alpha$ will denote a generic constant depending on the parameter $\alpha$. For clarity's sake, $c_\alpha$ and $K_\alpha$ may also be used when several constants are involved.

\section{Minimax Risk and Main Results}
\label{Minimax_Risk_and_Optimality_section}
\subsection{Statistical Model}
\label{statistical_model_section}
Let us describe the general statistical setting we will use to define optimality for manifold reconstruction. A \textit{statistical model} $\mathcal{D}$ is a set of probability distributions on $\R^D$. In any statistical experiment, $\mathcal{D}$ is fixed and known. 
We observe an independent and identically distributed sample of size $n$ (or i.i.d. $n$-sample) $\X_n = \left\{X_1,\ldots,X_n\right\}$ drawn according to some unknown distribution $P \in \mathcal{D}$. If no noise is allowed, the problem is to recover the \textit{support} of $P$, that is, the smallest closed set $C\subset \R^D$ such that $P(C)=1$.
Let us give two examples of such models $\mathcal{D}$ by describing those of interest in this paper.

Let $\mathcal{M}_{D,d,\rho}$ be the set of all compact $d$-dimensional connected submanifolds $M \subset \R^D$ without boundary satisfying $\mathrm{reach}(M)\geq \rho$. The reach assumption is crucial to avoid arbitrarily curved and pinched shapes~\cite{Cuevas2004}. From a reconstruction point of view, $\rho$ gives a minimal feature size on $M$, and then a minimal scale for geometric information.
Every $M\in \mathcal{M}_{D,d,\rho}$ inherits a measure induced by the $d$-dimensional Hausdorff measure on $\R^D\supset M$. We denote this induced measure by $v_M$.
Beyond the geometric restrictions induced by the lower bound $\rho$ on the reach, it also requires the natural measure $v_M$ to behave like a $d$-dimensional measure, up to uniform constants. 
Denote by $\mathcal{U}_M(f_{min},f_{max})$ the set of probability distributions $Q$ having a density $f$ with respect to $v_M$ such that $0< f_{min} \leq f(x) \leq f_{max}< \infty$ for all $x\in M$.
In particular, notice that such distributions $Q \in \mathcal{U}_M(f_{min},f_{max})$ all have support $M$.  
Roughly speaking, when $Q \in \mathcal{U}_M(f_{min},f_{max})$, points are drawn almost uniformly on $M$. 
This is to ensure that the sample visits all the areas of $M$ with high probability. 
The noise-free model $\mathcal{G}_{D,d,f_{min},f_{max},\rho}$ consists of the set of all these almost uniform measures on submanifolds of dimension $d$ having reach greater than a fixed value $\rho >0$.
\begin{Def}[Noise-free model]\label{nonoise_model_definition}
$
\mathcal{G}_{D,d,f_{min},f_{max},\rho} = \bigcup_{M \in \mathcal{M}_{D,d,\rho}} \mathcal{U}_M(f_{min},f_{max})
$.
\end{Def}
Notice that we do not explicitly impose a bound on the diameter of $M$. Actually, a bound is implicitly present in the model, as stated in the next lemma, the proof of which follows from a volume argument.
\begin{lem}\label{model_properties}
There exists $C_d>0$ such that for all $Q \in \mathcal{G}_{D,d,f_{min},f_{max},\rho}$ with associated $M$,
\[
\mathrm{diam}(M) \leq \frac{C_d}{\rho^{d-1} f_{min}} =: K_{d,f_{min},\rho}.
\]
\end{lem}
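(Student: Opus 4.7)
The plan is to combine two ingredients: the total mass constraint coming from the density lower bound $f\geq f_{min}$, and a lower bound on small-ball volumes on $M$ that follows from the reach condition.

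First, since $Q$ has density $f$ with respect to $v_M$ and $Q(M)=1$, we get the crude upper bound
\[
v_M(M) = \int_M \dd v_M \leq \frac{1}{f_{min}}\int_M f\, \dd v_M = \frac{1}{f_{min}}.
\]
So it suffices to show a lower bound of the form $v_M(M) \geq c_d\, \mathrm{diam}(M)\, \rho^{d-1}$.

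To do this, I would use the fact (a standard consequence of $\mathrm{reach}(M)\geq \rho$, as used in the $(a,d)$-standard property cited from \cite{Chazal2013}) that for every $p\in M$ and every $r\leq \rho$,
\[
v_M\bigl(\mathcal{B}(p,r)\cap M\bigr) \geq c_d\, r^d,
\]
for some dimensional constant $c_d>0$. The idea is then to pack many disjoint such balls along $M$. Let $p_0, p_1\in M$ realize the diameter $D=\mathrm{diam}(M)$. Because $M$ is connected (and path-connected, being a smooth submanifold), there is a continuous path $\gamma:[0,1]\to M$ with $\gamma(0)=p_0$, $\gamma(1)=p_1$. The map $t\mapsto \|\gamma(t)-p_0\|$ is continuous with values $0$ and $D$ at the endpoints, hence surjects onto $[0,D]$. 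Setting $r_k = k\rho/2$ for $k=0,1,\ldots, N$ with $N=\lfloor 2D/\rho\rfloor$, I pick $q_k\in M$ with $\|q_k-p_0\|=r_k$.

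By construction $\|q_k-q_\ell\|\geq \rho/2$ whenever $k\neq \ell$, so the balls $\mathcal{B}(q_k,\rho/4)$ are pairwise disjoint. Summing the small-ball lower bound then gives
\[
v_M(M) \geq \sum_{k=0}^{N} v_M\bigl(\mathcal{B}(q_k,\rho/4)\cap M\bigr)\geq (N+1)\, c_d\,(\rho/4)^d \geq c'_d\, D\, \rho^{d-1},
\]
and combined with $v_M(M)\leq 1/f_{min}$ this yields $D\leq C_d/(\rho^{d-1}f_{min})$, as claimed.

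The only mildly delicate point is the small-ball lower bound $v_M(\mathcal{B}(p,r)\cap M)\geq c_d r^d$ for $r\leq \rho$; this is the $(a,d)$-standard property and follows from reach $\geq \rho$ by comparing $M\cap \mathcal{B}(p,r)$ with its projection onto $T_pM$, which is a bi-Lipschitz map with controlled distortion at scales below $\rho$. Everything else is essentially an intermediate-value argument plus a disjoint-packing count, so no further obstacle is expected.
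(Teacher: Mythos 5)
Your proof is correct. Both your argument and the paper's hinge on the same key ingredient, the $(a,d)$-standard small-ball bound $v_M(\mathcal{B}(p,r)\cap M)\geq c_d r^d$ (equivalently $Q(\mathcal{B}(p,r))\geq a_d f_{min}r^d$) at scales below the reach, together with the total-mass bound coming from $f\geq f_{min}$; but the way you exploit it is genuinely different. The paper bounds the \emph{geodesic} covering number $\mathrm{cv}_M(\rho/4)\leq C_d/(f_{min}\rho^d)$ via a packing argument (using Proposition \ref{geodesics} to compare geodesic and Euclidean balls), then takes a minimal geodesic between two diameter-realizing points and chains the centers of the covering balls it meets, so that the triangle inequality gives $\mathrm{diam}(M)\leq 2\varepsilon\,\mathrm{cv}_M(\varepsilon)$. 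You instead pack directly in the ambient metric: by the intermediate value theorem applied to $t\mapsto\|\gamma(t)-p_0\|$ along any continuous path (using only connectedness), you produce $\lfloor 2D/\rho\rfloor+1$ points whose pairwise Euclidean distances are at least $\rho/2$ by the reverse triangle inequality, and sum the disjoint-ball volumes against $v_M(M)\leq 1/f_{min}$. Your route avoids geodesics, covering numbers, and the (slightly delicate) claim that a minimal geodesic meets each covering ball at most once, so it is more elementary and self-contained; the paper's route reuses machinery (Proposition \ref{geodesics} and the packing bound of \cite{Chazal2013}) already needed elsewhere. Two small points to note: the small-ball bound you quote is only needed (and only used) at radius $\rho/4$, where it does follow from the reach condition exactly as you say; and when two of your closed balls touch (centers exactly $\rho/2$ apart) the overlap is a single point of $v_M$-measure zero, so the summation of volumes is still valid.
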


Observed random variables with distribution belonging to the noise-free model $\mathcal{G}_{D,d,f_{min},f_{max},\rho}$ lie exactly on the submanifold of interest $M$. A more realistic model should allow some measurement error, as illustrated by Figure \ref{mixture1}. We formalize this idea with the following additive noise model.

\begin{Def}[Additive noise model]\label{tubular_model_definition}
For $\sigma < \rho$, we let $\mathcal{G}_{D,d,f_{min},f_{max},\rho,\sigma}$ denote the set of distributions of random variables $X = Y + Z$, where $Y$ has distribution $Q \in \mathcal{G}_{D,d,f_{min},f_{max},\rho}$, and $\norm{Z} \leq \sigma$ almost surely. 
\end{Def}

Let us emphasize that we do not require $Y$ and $Z$ to be independent, nor $Z$ to be orthogonal to $T_Y M$, as done for the ``perpendicular'' noise model of \cite{Niyogi08,Genovese12}.
This model is also slightly more general than the one considered in \cite{Maggioni16}.
Notice that the noise-free model can be thought of as a particular instance of the additive noise model, since $\mathcal{G}_{D,d,f_{min},f_{max},\rho} = \mathcal{G}_{D,d,f_{min},f_{max},\rho,\sigma=0}$.

Eventually, we may include distributions contaminated with outliers uniformly drawn in a ball $\mathcal{B}_0$ containing $M$, as illustrated in Figure \ref{mixture2}. Up to translation, we can always assume that $M \ni 0$.
To avoid boundary effects, $\mathcal{B}_0$ will be taken to contain $M$ amply, so that the outlier distribution surrounds $M$ everywhere. Since $M$ has at most diameter $K_{d,f_{min},\rho}$ from Lemma \ref{model_properties} we arbitrarily fix $\mathcal{B}_0 = \mathcal{B}(0,K_0)$, where $K_0 = K_{d,f_{min},\rho} + \rho$. Notice that the larger the radius of $\mathcal{B}_0$, the easier to label the outlier points since they should be very far away from each other.

\begin{figure}[!ht]
    \subfloat[Circle with noise:\:$d=1$, $D=2$, $\sigma>0$.\label{circle_noise}]{
      {\includegraphics[width=0.45\textwidth]{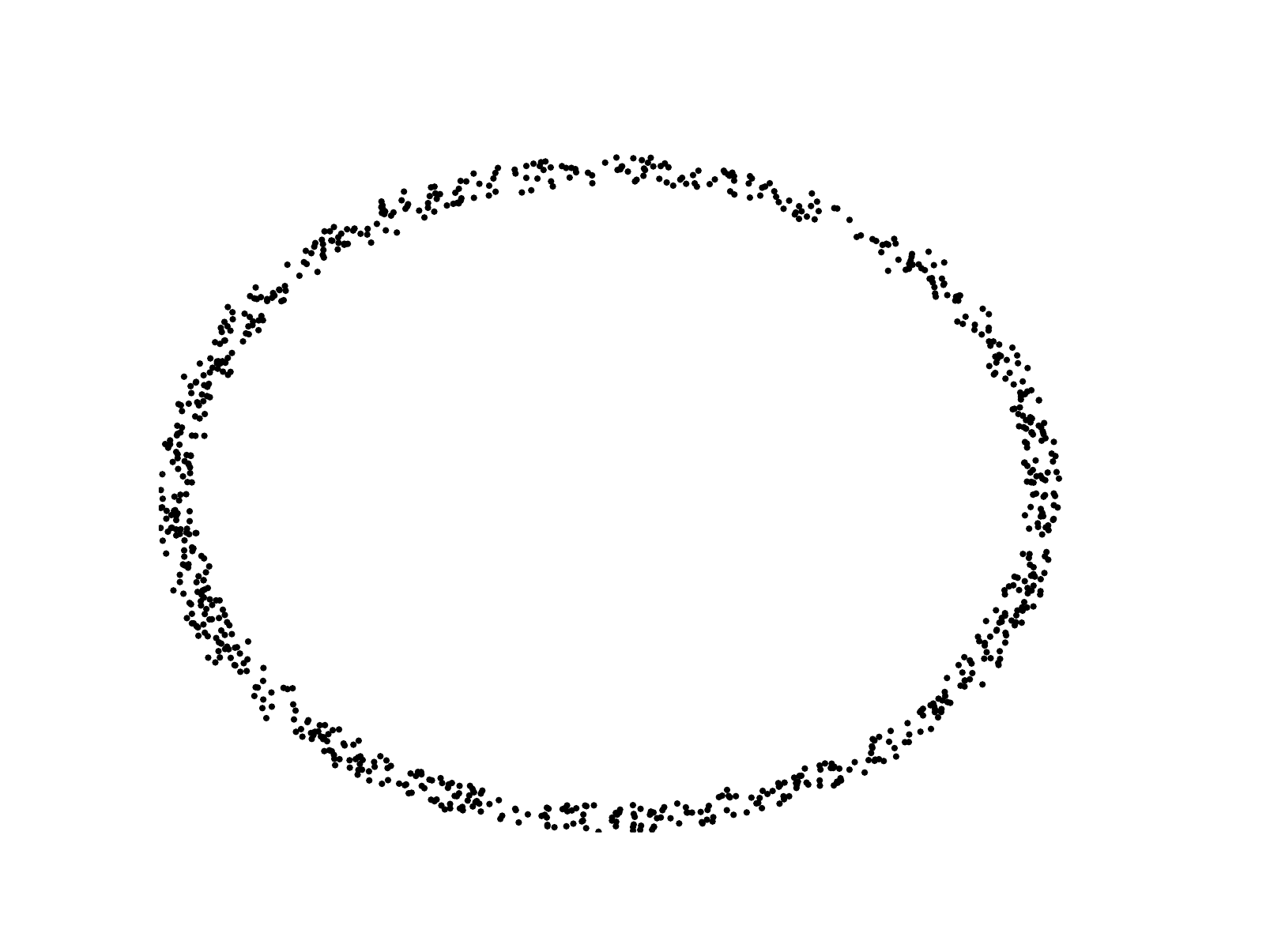}
    }
    \label{mixture1}
    }
    \hfill
    \subfloat[Torus with outliers:\:$d=2$, $D=3$, $\beta < 1$.\label{torus_noise}]{
      \includegraphics[width=0.45\textwidth]{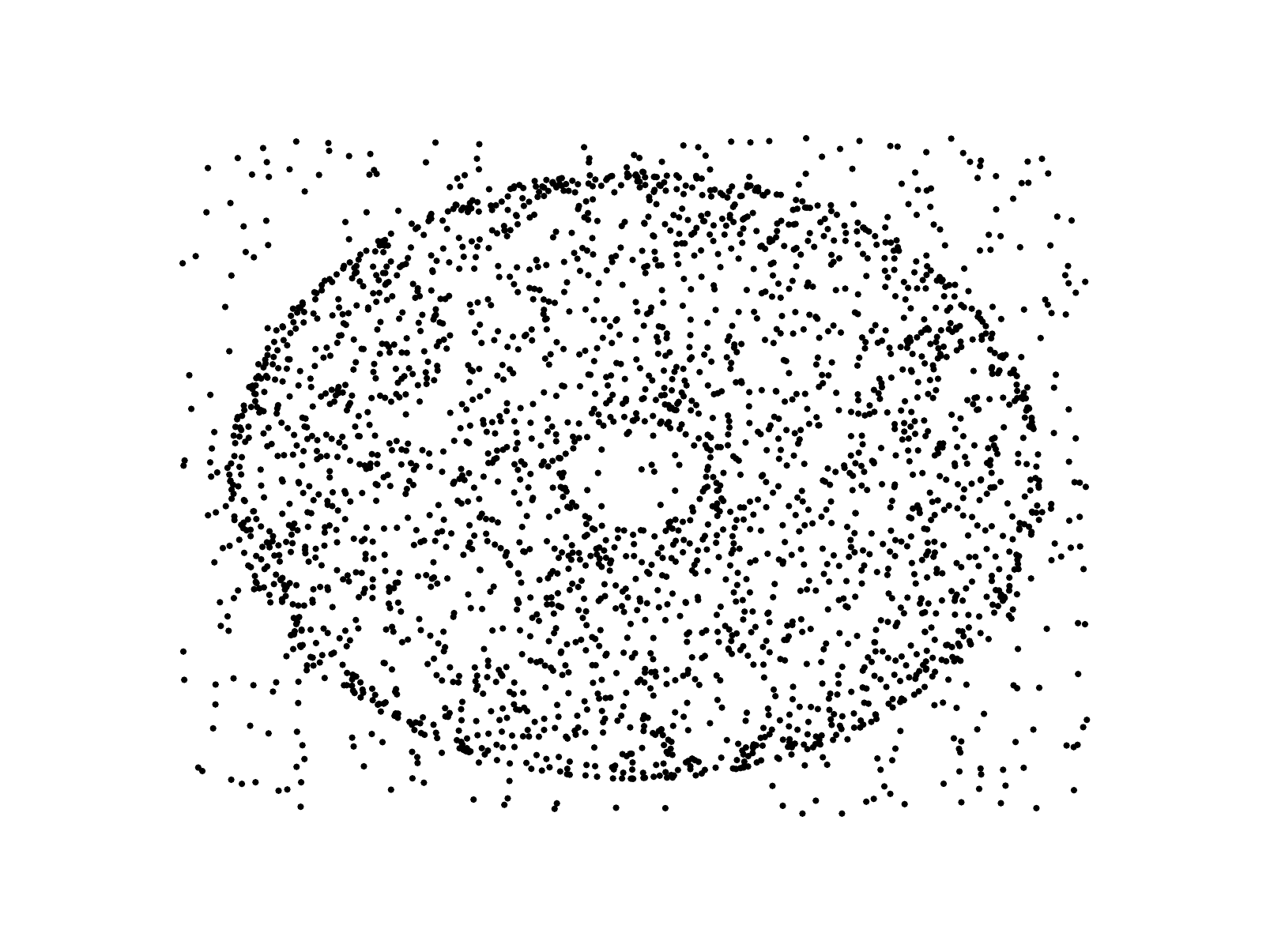}
	\label{mixture2}

    }
    \caption{Point clouds $\X_n$ drawn from distributions in $\mathcal{G}_{D,d,f_{min},f_{max},\rho,\sigma}$ (left) and $\mathcal{O}_{D,d,f_{min},f_{max},\rho,\beta}$ (right).}
	\label{mixture}
  \end{figure}

\begin{Def}[Model with outliers/Clutter noise model]\label{noise_model_definition}
For $0<f_{min} \leq f_{max} < \infty$, $0<\beta\leq 1$, and $\rho>0$, we define $\mathcal{O}_{D,d,f_{min},f_{max},\rho,\beta}$ to be the set of mixture distributions
\[
P = \beta Q + (1-\beta)U_{\mathcal{B}_0},
\]
where $Q \in \mathcal{G}_{D,d,f_{min},f_{max},\rho}$ has support $M$ such that $0 \in M$, and  $U_{\mathcal{B}_0}$ is the uniform distribution on $\mathcal{B}_0 = \mathcal{B}(0,K_0)$.
\end{Def}
\label{labels}Alternatively, a random variable $X$ with distribution $P \in \mathcal{O}_{D,d,f_{min},f_{max},\rho,\beta}$ can be represented as $X = V X' + (1-V)X''$ , where $V \in \left\{0,1\right\}$ is a Bernoulli random variable with parameter $\beta$, $X'$ has distribution in $\mathcal{G}_{D,d,f_{min},f_{max},\rho}$ and $X''$ has a uniform distribution over $\mathcal{B}_0$, and such that $V,X',X''$ are independent.
In particular for $\beta = 1$, $ \mathcal{O}_{D,d,f_{min},f_{max},\rho,\beta=1} = \mathcal{G}_{D,d,f_{min},f_{max},\rho}$.

\subsection{Minimax Risk}
\label{subsec:minimax_risk}
For a probability measure $P \in \mathcal{D}$, denote by $\E_P$  --- or simply $\E$ --- the expectation with respect to the product measure $P^{(n)}$. The quantity we will be interested in is the \textit{minimax risk} associated to the model $\mathcal{D}$. For $n\geq 0$,
\[ R_n(\mathcal{D}) = \inf_{\hat{M}} \sup_{P \in \mathcal{D}} \E_P \left[ \dd_{{H}}\left(M,\hat{M}\right)\right], \]
where the {infimum} is taken over all the estimators $\hat{M} = \hat{M}\left(X_1,\ldots,X_n\right)$ computed over an $n$-sample. $R_n(\mathcal{D})$ is the best risk that an estimator based on an $n$-sample can achieve uniformly over the class $\mathcal{D}$. It is clear from the definition that if $\mathcal{D}' \subset \mathcal{D}$ then $R_n(\mathcal{D'}) \leq R_n(\mathcal{D})$. It follows the intuition that the broader the class of considered manifolds, the more difficult it is to estimate them uniformly well. Studying $R_n(\mathcal{D})$ for a fixed $n$ is a difficult task that can rarely be carried out. We will focus on the semi-asymptotic behavior of this risk. 
As $R_n(\mathcal{D})$ cannot be surpassed, its rate of convergence to $0$ as $n\rightarrow \infty$ may be seen as the best rate of approximation that an estimator can achieve.
We will say that two sequences $(a_n)_n$ and $(b_n)_n$ are asymptotically comparable, denoted by $a_n \asymp b_n$, if there exist $c,C>0$ such that for $n$ large enough, $cb_n \leq a_n \leq Cb_n$.
\begin{Def}
\label{minimax_def}
An estimator $\hat{M}$ is said to be \textit{(asymptotically) minimax optimal} over $\mathcal{D}$ if
\[
\sup_{P \in \mathcal{D}} \E_P \left[ \dd_{{H}}\left(M,\hat{M}\right)\right]
\asymp
R_n(\mathcal{D}).
\]
\end{Def}
In other words, $\hat{M}$ is (asymptotically) minimax optimal if it achieves, up to constants, the best possible rate of convergence in the worst case.

Studying a minimax rate of convergence is twofold. On one hand, deriving an upper bound on $R_n$ boils down to provide an estimator and to study its quality uniformly on $\mathcal{D}$. On the other hand, bounding $R_n$ from below amounts to study the worst possible case in $\mathcal{D}$. This part is usually achieved with standard Bayesian techniques \cite{Lecam73}. For the models considered in the present paper, the rates were given in \cite{Genovese12,Kim2015}.

\begin{thm}[Theorem 3 of \cite{Kim2015}]\label{minimax_rates}
We have,
\begin{align}
R_n\left(\mathcal{G}_{D,d,f_{min},f_{max},\rho}\right)
\asymp
\left(\frac{\log n}{ n}\right)^{2/d},
\tag{Noise-free}
\end{align}
and for $0 < \beta\leq 1$ fixed,
\begin{align*}
R_n\left(\mathcal{O}_{D,d,f_{min},f_{max},\rho,\beta}\right) 
	\asymp \left(\frac{\log n}{\beta n}\right)^{2/d}.
	\tag{Clutter noise}
\end{align*}
\end{thm}
Since the additive noise model $\mathcal{G}_{D,d,f_{min},f_{max},\rho,\sigma}$ has not yet been considered in the literature, the behavior of the associated minimax risk is not known.
Beyond this theoretical result, an interesting question is to know whether these minimax rates can be achieved by a tractable algorithm. Indeed, that proposed in~\cite{Genovese12} especially rely on a minimization problem over the class of submanifolds $\mathcal{M}_{D,d,\rho}$, which is computationally costly.
In addition, the proposed estimators are themselves submanifolds, which raises storage problems.
Moreover, no guarantee is given on the topology of the estimators. 
Throughout the present paper, we will build estimators that address these issues.

\subsection{Main Results}
\label{subsec:main_results}
Let us start with the additive noise model $\mathcal{G}_{D,d,f_{min},f_{max},\rho,\sigma}$, that includes in particular the noise-free case $\sigma = 0$. 
The estimator $\hat{M}_{\mathtt{TDC}}$  is based on the Tangential Delaunay Complex (Section \ref{tangential_delaunay_complex_section}), with a tangent space estimation using a local PCA (Section \ref{tangent_space_estimation_and_denoising_procedure}).

\begin{thm}
\label{main_result_noob_nonoise}
$\hat{M}_{\mathtt{TDC}} = \hat{M}_{\mathtt{TDC}}(\X_n)$ is a simplicial complex with vertices included in $\X_n$ such that the following holds.
There exists $\lambda_{d,f_{min},f_{max}}> 0$ such that if $\sigma \leq \lambda \left(\frac{\log n}{n} \right)^{1/d}$ with $\lambda \leq \lambda_{d,f_{min},f_{max}}$, then
\[
\underset{n \rightarrow \infty}{\lim} 
\p \left(
	\dd_{H}({M},\hat{M}_{\mathtt{TDC}}) \leq {C_{d,f_{min},f_{max},{\rho}}} \left\lbrace \left( \frac{\log n}{n} \right)^{2/d} \vee \lambda^2 \right\rbrace
	\text{ and }
	{M} \cong \hat{M}_{\mathtt{TDC}}
 \right)
 = 1.
\]
Moreover, for $n$ large enough,
\[
\sup_{Q \in \mathcal{G}_{D,d,f_{min},f_{max},\rho,\sigma}} \E_Q  \dd_{H}({M},\hat{M}_{\mathtt{TDC}}) \leq {C'_{d,f_{min},f_{max},{\rho}}} \left\lbrace \left( \frac{\log n}{n} \right)^{2/d} \vee \lambda^2 \right\rbrace.
\]
\end{thm}
It is interesting to note that the constants appearing in Theorem \ref{main_result_noob_nonoise} do not depend on the ambient dimension $D$.
Since $R_n\left(\mathcal{G}_{D,d,f_{min},f_{max},\rho,\sigma}\right) \geq R_n\left(\mathcal{G}_{D,d,f_{min},f_{max},\rho}\right)$, we obtain immediately from  Theorem \ref{main_result_noob_nonoise} that $\hat{M}_{\mathtt{TDC}}$ achieves the minimax optimal rate $\left({\log n}/{n} \right)^{2/d}$ over $\mathcal{G}_{D,d,f_{min},f_{max},\rho,\sigma}$ when $\sigma \leq c_{d,f_{min},f_{max}} \left({\log n}/{n} \right)^{2/d}$.
Note that the estimator of \cite{Maggioni16} achieves the rate $\left({\log n}/{n} \right)^{2/(d+2)}$ when $\sigma \leq c_{d,f_{min},f_{max}} \left({\log n}/{n} \right)^{2/(d+2)}$, so does the estimator of \cite{Genovese11} for $\sigma < \rho$ if the noise is centered and perpendicular to the submanifold. 
As a consequence, $\hat{M}_{\mathtt{TDC}}$ outperforms these two existing procedures whenever  $\sigma \ll \left( \log n / n \right)^{2/(d+2)}$, with the additional feature of exact topology recovery.
Still, for $\sigma \gg  \left( \log n / n \right)^{1/d}$, $\hat{M}_{\mathtt{TDC}}$ may perform poorly compared to \cite{Genovese11}. This might be due to the fact that the vertices of $\hat{M}_{\mathtt{TDC}}$ are sample points themselves, while for higher noise levels, a pre-process of the data based on local averaging could be more relevant.

In the model with outliers $\mathcal{O}_{D,d,f_{min},f_{max},\rho,\beta}$, with the same procedure used to derive Theorem \ref{main_result_noob_nonoise} and an additional iterative preprocessing of the data based on local PCA to remove outliers (Section \ref{tangent_space_estimation_and_denoising_procedure}), we design an estimator of $M$ that achieves a rate as close as wanted to the noise-free rate. Namely, for any positive $\delta < 1/(d(d+1))$, we build $\hat{M}_{\mathtt{TDC \delta}}$ that satisfies the following similar statement.

\begin{thm}
\label{main_result_noob_noise}
$\hat{M}_{\mathtt{TDC}\delta} = \hat{M}_{\mathtt{TDC}\delta}(\X_n)$ is a simplicial complex with vertices included in $\X_n$ such that
\[
\underset{n \rightarrow \infty}{\lim} 
\p \left(
	\dd_{H}({M},\hat{M}_{\mathtt{TDC \delta}}) \leq C_{d,f_{min},f_{max},\rho}  \left( \dfrac{\log n}{\beta n} \right)^{2/d - 2 \delta}
	\text{ and }
	{M} \cong \hat{M}_{\mathtt{TDC \delta}}
 \right)
 = 1.
\]
Moreover, for $n$ large enough,
\[
\sup_{P \in \mathcal{O}_{D,d,f_{min},f_{max},\rho,\beta}} \E_P  \dd_{H}({M},\hat{M}_{\mathtt{TDC \delta}}) \leq C'_{d,f_{min},f_{max},\rho}  \left( \dfrac{\log n}{\beta n} \right)^{2/d - 2 \delta}.
\]
\end{thm}

$\hat{M}_{\mathtt{TDC \delta}}$ converges at the rate at least $\left( {\log n}/{n} \right)^{2/d - 2\delta}$, which is not the minimax optimal rate according to Theorem \ref{minimax_rates}, but that can be set as close as desired to it.
To our knowledge, $\hat{M}_{\mathtt{TDC\delta}}$ is the first explicit estimator to provably achieve such a rate in the presence of outliers. Again, it is worth noting that the constants involved in Theorem \ref{main_result_noob_noise} do not depend on the ambient dimension $D$.
The construction and computation of $\hat{M}_{\mathtt{TDC \delta}}$ is the same as $\hat{M}_{\mathtt{TDC}}$, with an extra pre-processing of the point cloud allowing to remove outliers. 
This decluttering procedure leads to compute, at each sample point, at most $\log(1/\delta)$ local PCA's, instead of a single one for $\hat{M}_{\mathtt{TDC}}$.

From a theoretical point of view, there exists a (random) number of iterations of this decluttering process, from which an estimator $\hat{M}_{\mathtt{TDC +}}$ can be built to satisfy the following.

\begin{thm}
\label{theoretical_rate}
$\hat{M}_{\mathtt{TDC +}} = \hat{M}_{\mathtt{TDC +}}(\X_n)$ is a simplicial complex of vertices contained in $\X_n$ such that

\[
\underset{n \rightarrow \infty}{\lim} 
\p \left(
	\dd_{H}({M},\hat{M}_{\mathtt{TDC+}}) \leq C_{d,f_{min},f_{max},\rho}  \left( \dfrac{\log n}{\beta n} \right)^{2/d}
	\text{ and }
	{M} \cong \hat{M}_{\mathtt{TDC+}}
 \right)
 = 1.
\]
Moreover, for $n$ large enough,
\[
\sup_{P \in \mathcal{O}_{D,d,f_{min},f_{max},\rho,\beta}} \E_P  \dd_{H}({M},\hat{M}_{\mathtt{TDC+}}) \leq C'_{d,f_{min},f_{max},\rho}  \left( \dfrac{\log n}{\beta n} \right)^{2/d}.
\]
\end{thm}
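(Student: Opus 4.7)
The strategy is to derive Theorem \ref{theoretical_rate} from Theorem \ref{main_result_noob_noise} by letting the precision parameter $\delta$ shrink with $n$, while replacing its a priori choice by a data-driven stopping rule of the iterative denoising. A careful look at the construction of $\hat{M}_{\mathtt{TDC \delta}}$ reveals that the loss factor $(\log(n-1)/(\beta(n-1)))^{-2\delta}$ stems from inflating the bandwidth used both in the iterative local PCA denoising and in the TDC parameters; this inflation ensures that, with high probability, every local neighborhood examined during the prescribed $N(\delta)$ cleaning iterations contains enough inliers. As the sample grows this slack should become unnecessary, so that $\delta$ may be let to vanish.

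Concretely, I would first choose a null sequence $\delta_n \downarrow 0$ slowly enough that $(\log(n-1)/(\beta(n-1)))^{-2\delta_n}$ stays bounded as $n \to \infty$ --- for instance $\delta_n = 1/\log n$, for which $(\log n/(\beta n))^{-2\delta_n} = O(1)$. Applying Theorem \ref{main_result_noob_noise} along this sequence produces an event $\mathcal{A}_n$ with $\p(\mathcal{A}_n) \to 1$ on which
\[
\dd_\mathrm{H}(M,\hat{M}_{\mathtt{TDC \delta_n}}) \leq C_{d,f_{min},f_{max},\rho} \left(\frac{\log(n-1)}{\beta(n-1)}\right)^{2/d}
\quad\text{and}\quad M \cong \hat{M}_{\mathtt{TDC \delta_n}}.
\]
The number of denoising iterations $N(\delta_n) \to \infty$ depends on the unknown $\delta_n$, so to eliminate this dependence I would iterate the local PCA denoising until the processed point cloud stabilizes --- e.g.\ until two consecutive iterations remove no point, or the maximal PCA residual across neighborhoods falls below a sample-based threshold of order $(\log n/(\beta n))^{2/d}$ --- and then apply the TDC construction to the terminal cloud. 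Call the resulting estimator $\hat{M}_{\mathtt{TDC+}}$. Since the sequence of denoised clouds is monotone decreasing, the procedure terminates almost surely, and on $\mathcal{A}_n$ the fixed point is reached within $N(\delta_n)$ steps, so $\hat{M}_{\mathtt{TDC+}} = \hat{M}_{\mathtt{TDC \delta_n}}$ on $\mathcal{A}_n$, which yields the high-probability statement.

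For the expectation bound I would use the standard decomposition on $\mathcal{A}_n$ and $\mathcal{A}_n^c$. By construction both $M$ and $\hat{M}_{\mathtt{TDC+}}$ are contained in $\mathcal{B}_0 = \mathcal{B}(0,K_0)$ with $K_0 = K_{d,f_{min},\rho}+\rho$, so $\dd_\mathrm{H}(M,\hat{M}_{\mathtt{TDC+}}) \leq 2K_0$ holds deterministically. The high-probability rate above controls the contribution of $\mathcal{A}_n$, and the polynomial decay of $\p(\mathcal{A}_n^c)$ multiplied by the constant diameter bound is negligible compared to $(\log(n-1)/(\beta(n-1)))^{2/d}$.

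The main obstacle is tracking the failure probability uniformly in the vanishing $\delta_n$. The proof of Theorem \ref{main_result_noob_noise} rests on concentration of the empirical distribution at scales indexed by $\delta$, followed by a union bound over the $N(\delta)$ denoising iterations. For the argument above to go through one must verify that the polynomial decay rate of the single-step failure probability does not degenerate as $\delta \to 0$, and that $N(\delta_n)$ grows at most polynomially in $n$, so that $\p(\mathcal{A}_n^c) = o((\log n/n)^{2/d})$ holds. Once this quantitative control on the concentration is in hand, the monotonicity of the denoised subclouds makes the proof that the data-driven stopping rule triggers on $\mathcal{A}_n$ within $N(\delta_n)$ iterations routine, and the conclusions of Theorem \ref{theoretical_rate} follow.
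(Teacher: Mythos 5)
Your route differs fundamentally from the paper's. The paper does not let $\delta$ vanish with $n$, nor does it use a data-driven stopping rule: $\hat{M}_{\mathtt{TDC+}}$ is defined as a purely theoretical estimator through the random index $\hat{k}$, the smallest integer such that every sample point at distance greater than $h_{\infty}^2/\rho$ from $M$ is in fact at distance greater than $h_{\hat{k}}^2/\rho$ (a quantity depending on the unobserved distances $\dd(X_j,M)$, which is why the paper calls the threshold ``random, hence unknown''). After step $\hat{k}$, one extra denoising/PCA step at the fixed bandwidth $h_{\infty} \asymp (\log (n-1)/(\beta(n-1)))^{1/d}$ yields tangent precision $C h_{\infty}/\rho$ (Proposition \ref{infinite_denoising_estimator}, proved by rerunning the iteration-step computation with $h_{k}^2/\rho$ replaced by $h_{\infty}^2/\rho$), and the conclusion then follows exactly as for Theorem \ref{main_result_noob_noise}: a good event as in Corollary \ref{recap_noise} with $\varepsilon \asymp h_\infty$, Theorem \ref{tdc_stability} on that event, and the expectation handled by the deterministic bound $2K_0$ off the event.

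Your proposal has two genuine gaps. First, you invoke Theorem \ref{main_result_noob_noise} along a sequence $\delta_n \to 0$, but that theorem is an asymptotic statement for each fixed $\delta$; to use it along $\delta_n$ you need the constants and, more importantly, the ``$n$ large enough'' thresholds and failure probabilities to be uniform as $\delta \to 0$ while the iteration count $k_{\delta_n} \asymp \log(1/\delta_n)$ diverges. This uniformity is precisely the nontrivial content (in the paper it comes from the concentration event of Lemma \ref{Concentration} being uniform over all bandwidths $h \in [h_\infty, h_0]$ via peeling, with induction constants independent of $k$); you flag it as ``must verify'' but do not verify it, and it cannot be extracted from the statement of Theorem \ref{main_result_noob_noise} alone. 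Second, the identification $\hat{M}_{\mathtt{TDC+}} = \hat{M}_{\mathtt{TDC\delta_n}}$ on the good event is asserted, not proved, and for your concrete stopping rules it is doubtful: each denoising step uses a strictly smaller bandwidth, and the guarantees of Proposition \ref{noisefiltering} are one-sided (points at distance between $h_{k+1}^2/\rho$ and $h_k^2/\rho$ may survive step $k+1$ yet be removed later), so ``two consecutive iterations remove no point'' does not imply the cloud has reached the state it has after $k_{\delta_n}$ prescribed iterations; moreover $\hat{M}_{\mathtt{TDC\delta_n}}$ is built with the specific bandwidth $h_{k_{\delta_n}}$ in both the tangent space estimation and the sparsification scale, so even an identical point cloud stopped at a different index would generally produce a different simplicial complex. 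Without these two steps the high-probability claim, and hence the expectation bound, is not established.
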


$\hat{M}_{\mathtt{TDC +}}$ may be thought of as a limit of $\hat{M}_{\mathtt{TDC \delta}}$ when $\delta$ goes to $0$. As it will be proved in Section \ref{tangent_space_estimation_and_denoising_procedure}, this limit will be reached for $\delta$ close enough to $0$. Unfortunately this convergence threshold is also random, hence unknown.

The statistical analysis of the reconstruction problem is postponed to Section \ref{tangent_space_estimation_and_denoising_procedure}.
Beforehand, let us describe the Tangential Delaunay Complex in a deterministic and idealized framework where the tangent spaces are known and no outliers are present.

\section{Tangential Delaunay Complex}
\label{tangential_delaunay_complex_section}
Let $\mathcal{P}$ be a finite subset of $\R^D$. In this section, we denote the point cloud $\mathcal{P}$ to emphasize the fact that it is considered nonrandom. For $\varepsilon,\delta >0$, $\mathcal{P}$ is said to be $\varepsilon$\textit{-dense} in $M$ if $\sup_{x \in M}d(x,\mathcal{P}) \leq \varepsilon$, and $\delta$\textit{-sparse} if $\dd(p,\mathcal{P} \setminus\{p\}) \geq \delta$ for all $p \in \mathcal{P}$.
A $(\delta,\varepsilon)$\textit{-net} (of $M$) is a $\delta$-sparse and $\varepsilon$-dense point cloud.

\subsection{Restricted Weighted Delaunay Triangulations}
We now assume that $\mathcal{P} \subset M$.
A weight assignment to $\mathcal{P}$ is a function $\omega : \mathcal{P} \longrightarrow [0,\infty)$. The \textit{weighted Voronoi diagram} is defined to be the Voronoi diagram associated to the weighted distance $\dd(x,p^\omega)^2 = \norm{x-p}^2 - \omega(p)^2$. Every $p \in \mathcal{P}$ is associated to its weighted Voronoi cell $ \mathrm{Vor}^\omega(p)$.
For $\tau \subset \mathcal{P}$, let
\[ \mathrm{Vor}^\omega(\tau) = \bigcap_{p\in\tau} \mathrm{Vor}^\omega(p)
\]
be the common face of the weighted Voronoi cells of the points of $\tau$. The \textit{weighted Delaunay triangulation} $\mathrm{Del}^\omega(\mathcal{P})$ is the dual triangulation to the decomposition given by the weighted Voronoi diagram. In other words, for $\tau \subset \mathcal{P}$, the simplex with vertices $\tau$, also denoted by $\tau$, satisfies
\[
\tau \in \mathrm{Del}^\omega(\mathcal{P})
\Leftrightarrow
\mathrm{Vor}^\omega(\tau) \neq \emptyset.
\]
Note that for a constant weight assignment $\omega(p) \equiv \omega_0$, $\mathrm{Del}^\omega(\mathcal{P})$ is the usual Delaunay triangulation of $\mathcal{P}$.
Under genericity assumptions on $\mathcal{P}$ and bounds on $\omega$,  $\mathrm{Del}^\omega(\mathcal{P})$ is an embedded triangulation with vertex set $\mathcal{P}$ \cite{Boissonnat14}. 
The reconstruction method proposed in this paper is based on $\mathrm{Del}^\omega(\mathcal{P})$ for some weights $\omega$ to be chosen later.
As it is a triangulation of the whole convex hull of $\mathcal{P}$ and fails to recover the geometric structure of $M$, we take restrictions of it in the following manner. 

Given a family $R = \left\{R_p\right\}_{p\in \mathcal{P}}$ of subsets $R_p \subset \R^D$ indexed by $\mathcal{P}$, the weighted Delaunay complex restricted to $R$ is the sub-complex of $\mathrm{Del}^\omega(\mathcal{P})$ defined by
\[
\tau \in \mathrm{Del}^\omega(\mathcal{P},R)
\Leftrightarrow
\mathrm{Vor}^\omega(\tau) \cap \left( \bigcup_{p \in \tau} R_p \right) \neq \emptyset.
\]
In particular, we define the \textit{Tangential Delaunay Complex} $\mathrm{Del}^\omega(\mathcal{P},T)$ by taking $R = T = \left\{T_p M \right\}_{p\in \mathcal{P}}$, the family of tangent spaces taken at the points of $\mathcal{P} \subset M$ \cite{Boissonnat14}. $\mathrm{Del}^\omega(\mathcal{P},T)$ is a pruned version of $\mathrm{Del}^\omega(\mathcal{P})$ where only the simplices with directions close to the tangent spaces are kept. 
{\color{black}{Indeed, $T_p M$ being the best linear approximation of $M$ at $p$, it is very unlikely for a reconstruction of $M$ to have components in directions normal to $T_p M$ (see Figure \ref{tangential_illustration}).}}
As pointed out in \cite{Boissonnat14}, computing $\mathrm{Del}^\omega(\mathcal{P},T)$ only requires to compute Delaunay triangulations in the tangent spaces that have dimension $d$. This reduces the computational complexity dependency on the ambient dimension $D > d$.
\begin{figure}[ht]
\centering
\includegraphics[width=0.8\textwidth]{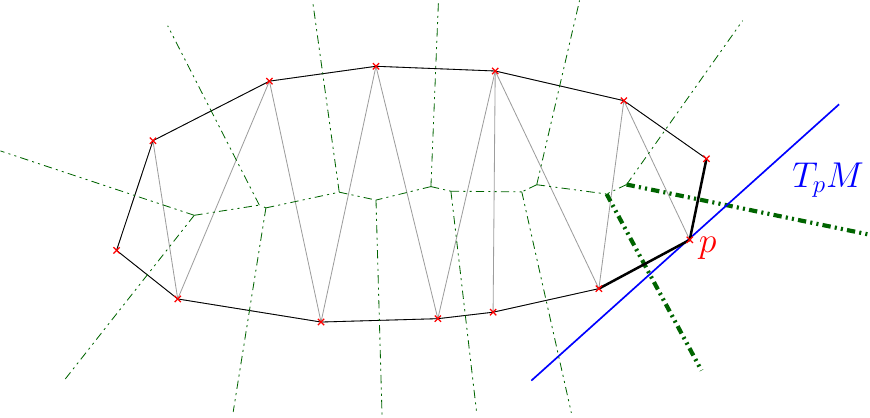}

\caption{Construction of $\mathrm{Del}^\omega(\mathcal{P},T)$ at $p$ for $\omega \equiv 0$: $p$ has three incident edges in the ambient Delaunay triangulation, but only two (bold) have dual Voronoi face intersecting $T_pM$.}
\label{tangential_illustration}
\end{figure}
The weight assignment $\omega$ gives degrees of freedom for the reconstruction. The extra degree of freedom $\omega$ permits to stabilize the triangulation and to remove the so-called \textit{inconsistencies}, the points remaining fixed. For further details, see \cite{Boissonnat09,Boissonnat14}.
\subsection{Guarantees}

The following result sums up the reconstruction properties of the Tangential Delaunay Complex that we will use. For more details about it, the reader is referred to \cite{Boissonnat14}.
\begin{thm}[Theorem 5.3 in \cite{Boissonnat14}]
\label{Resultats_JDB}
There exists $\varepsilon_d>0$ such that for all $\varepsilon \leq \varepsilon_d \rho$ and all $M \in \mathcal{M}_{D,d,\rho}$, if $\mathcal{P} \subset M$ is an $( \varepsilon,2\varepsilon)$-net, there exists a weight assignment $\omega_\ast = \omega_{\ast \mathcal{P},T}$ depending on $\mathcal{P}$ and $T = \left\{T_p M\right\}_{p\in \mathcal{P}}$ such that
\begin{itemize}
\item $\dd_{H} \left( M, \mathrm{Del}^{\omega_\ast}(\mathcal{P},T) \right) \leq {C_{d}} {\varepsilon^2}/{\rho}$,
\item $M$ and $\mathrm{Del}^{\omega_\ast}(\mathcal{P},T)$ are ambient isotopic.
\end{itemize}
\end{thm}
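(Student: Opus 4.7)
The plan is to follow the Boissonnat--Ghosh construction, whose central idea is to reduce a high-dimensional reconstruction problem to $d$-dimensional weighted Delaunay triangulations performed inside each tangent space, then glue them by means of a carefully chosen weight assignment. First, I would set up the local picture: fix $p \in \mathcal{P}$ and let $\pi_p$ denote the orthogonal projection onto $T_p M$. Since $\mathrm{reach}(M) \geq \rho$ and $\mathcal{P}$ is $(\varepsilon, 2\varepsilon)$-dense, Federer's reach inequality yields $\norm{q - \pi_p(q)} \leq \norm{q-p}^2/(2\rho)$ for every $q \in \mathcal{P}$ within distance $O(\varepsilon)$ of $p$. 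Thus the nearby point cloud lies in an $O(\varepsilon^2/\rho)$-tube around $T_p M$, and the local star of $p$ in the tangential complex is, up to $O(\varepsilon^2/\rho)$ perturbation, a $d$-dimensional weighted Delaunay triangulation of the projected cloud $\pi_p(\mathcal{P})$ restricted to a neighborhood of $p$. The $(\varepsilon,2\varepsilon)$-net property transfers to the projections up to constants.

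Second, I would handle the central combinatorial difficulty: a simplex $\tau$ may belong to the local star at some $p \in \tau$ but not at another $p' \in \tau$. These \emph{inconsistencies} prevent $\mathrm{Del}^{\omega}(\mathcal{P}, T)$ from being a combinatorial manifold and are caused by \emph{slivers}, namely $d$-simplices whose volume is abnormally small given their circumradius, so that arbitrarily small perturbations tip their dual Voronoi faces across tangent spaces. I would invoke the weighted sliver-elimination scheme (Cheng--Dey--Edelsbrunner--Facello--Teng, adapted in Boissonnat--Ghosh): iterating over $p \in \mathcal{P}$, choose $\omega_\ast(p) \in [0, c_0 \varepsilon]$ outside the bad set of weights that produce slivers incident to $p$. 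A volume/pigeonhole argument bounds this bad set by Lebesgue measure $c_0 \varepsilon/2$, so a valid choice always exists; termination after at most $|\mathcal{P}|$ iterations yields $\omega_\ast$ such that $\mathrm{Del}^{\omega_\ast}(\mathcal{P}, T)$ is consistent and forms an embedded closed $d$-dimensional simplicial submanifold $\hat{M}$ of $\R^D$.

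Third, I would derive both conclusions simultaneously. Each simplex $\tau$ of $\hat{M}$ now has fatness bounded below and circumradius $O(\varepsilon)$, so its affine hull makes angle $O(\varepsilon/\rho)$ with $T_p M$ at every vertex $p \in \tau$. Combined with the $O(\varepsilon^2/\rho)$ tubular closeness of vertices to their tangent spaces, every point of $\tau$ lies within $O(\varepsilon^2/\rho)$ of $M$, which gives $\dd_\mathrm{H}(\hat{M}, M) \leq C_{d,\rho}\, \varepsilon^2$ on one side; the reverse direction follows because the local stars collectively triangulate a neighborhood of $M$ at scale $O(\varepsilon^2/\rho)$. For ambient isotopy, I would show that $\pi_M$ restricted to $\hat{M}$ is a homeomorphism onto $M$ (since $\hat{M}$ is a $d$-manifold lying inside the $\rho/2$-tube of $M$ with small normal angle everywhere), and then use the straight-line homotopy $H(x,t) = (1-t)x + t\,\pi_M(x)$, which extends to an ambient isotopy of $\R^D$ by standard tubular-neighborhood techniques (e.g. a bump function supported in a $\rho/2$-tube).

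The main obstacle is the sliver-elimination step: one must show both that perturbing the weight at a single vertex does not recreate inconsistencies elsewhere, and that the bad weights at $p$ form a set of small Lebesgue measure. The latter requires a delicate quantitative estimate, for fixed $p$ and varying $\omega(p)$, on the range of weights for which the weighted circumcenter of each candidate sliver crosses $T_p M$; the bookkeeping to ensure termination and to preserve the star structure across all vertices is the most technical part of the argument.
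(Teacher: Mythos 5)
This statement is not proved in the paper at all: it is quoted verbatim as Theorem 5.3 of \cite{Boissonnat14} and used as a black box, so the only meaningful comparison is with the proof in that reference. Your outline does follow the same route as Boissonnat and Ghosh: reduction to $d$-dimensional weighted Delaunay triangulations in the tangent spaces, removal of inconsistencies by a sliver-avoiding weight assignment in the spirit of Cheng--Dey--Edelsbrunner--Facello--Teng, and then the Hausdorff bound and the isotopy via the restriction of $\pi_M$ to the resulting complex. In that sense the approach is the right one and matches the source.

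As a proof, however, what you have written is a roadmap whose decisive steps are asserted rather than established, and they are precisely the content of the cited work. The claim that every inconsistency is witnessed by a sliver in a suitable candidate set of simplices, the Lebesgue-measure bound on the set of bad weights at a fixed vertex, and the fact that perturbing the weight at one vertex does not recreate inconsistencies or destroy the star structure elsewhere (so that the greedy loop terminates with a globally consistent complex) together occupy the bulk of the argument in \cite{Boissonnat14}; flagging them as ``the most technical part'' does not discharge them. Likewise, the final step needs more than you state: before the straight-line isotopy can be used, one must know that $\mathrm{Del}^{\omega_\ast}(\mathcal{P},T)$ is an embedded closed $d$-manifold and that $\pi_M$ restricted to it is a homeomorphism onto $M$, and both facts rest on the quantitative fatness and angle bounds that only become available after sliver removal, with constants that must be tracked so that $C_{d,\rho}$ and $\varepsilon_0$ do not depend on $D$ or on $\mathcal{P}$. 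So the proposal is a faithful summary of the known proof strategy rather than an independent or complete proof of the theorem.
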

Computing $\mathrm{Del}^{\omega_\ast}(\mathcal{P},T)$ requires to determine the weight function $\omega_\ast = \omega_{\ast \mathcal{P},T}$. In \cite{Boissonnat14}, a greedy algorithm is designed for this purpose and has a time complexity ${O}\left( Dn^2 + D 2^{O(d^2)}n \right)$.

Given an $(\varepsilon,2\varepsilon)$-net $\mathcal{P}$ for $\varepsilon$ small enough, $\mathrm{Del}^{\omega_\ast}(\mathcal{P},T)$ recovers $M$ up to ambient isotopy and approximates it at the scale $\varepsilon^2$. The order of magnitude $\varepsilon^2$ with an input $\mathcal{P}$ of scale $\varepsilon$ is remarkable. Another instance of this phenomenon is present in \cite{Clarkson2006} in codimension $1$. 
We will show that this $\varepsilon^2$ provides the minimax rate of approximation when dealing with random samples. Therefore, it can be thought of as optimal.

Theorem \ref{Resultats_JDB} suffers two major imperfections. First, it requires the knowledge of the tangent spaces at each sample point  --- since ${\omega_\ast} = {\omega_{\ast \mathcal{P}, T}}$ --- and it is no longer usable if tangent spaces are only known up to some error. Second, the points are assumed to lie exactly on the submanifold $M$, and no noise is allowed.
The analysis of $\mathrm{Del}^{\omega_\ast}(\mathcal{P},T)$ is sophisticated \cite{Boissonnat14}.
Rather than redo the whole study with milder assumptions, we tackle this question with an approximation theory approach (Theorem \ref{tangential_perturbation_thm}). Instead of studying if $\mathrm{Del}^{\omega_\ast}(\mathcal{P}',T')$ is stable
when $\mathcal{P}'$ lies close to $M$ and $T'$ close to $T$,
we examine what $\mathrm{Del}^{\omega_\ast}(\mathcal{P}',T')$ actually reconstructs, as detailed in Section \ref{stability_section}.

\subsection{On the Sparsity Assumption}
\label{sparsity_section}

In Theorem \ref{Resultats_JDB}, $\mathcal{P}$ is assumed to be dense enough so that it covers all the areas of $M$. It is also supposed to be sparse at the same scale as the density parameter $\varepsilon$.  Indeed, arbitrarily accumulated points would generate non-uniformity and instability for $\mathrm{Del}^{\omega_\ast}(\mathcal{P},T)$ \cite{Boissonnat09,Boissonnat14}. At this stage, we emphasize that the construction of a $(\varepsilon,2\varepsilon)$-net can be carried out given an $\varepsilon$-dense sample. Given an $\varepsilon$-dense sample $\mathcal{P}$, the \textit{farthest point sampling} algorithm prunes $\mathcal{P}$ and outputs an $(\varepsilon,2 \varepsilon)$-net $\mathcal{Q} \subset \mathcal{P}$ of $M$ as follows. Initialize at $\mathcal{Q} = \{p_1\} \subset \mathcal{P}$, and while $\underset{p \in \mathcal{P}}{\max} ~ \dd(p,\mathcal{Q}) > \varepsilon$, add to $\mathcal{Q}$ the farthest point to $\mathcal{Q}$ in $\mathcal{P}$, that is, $\mathcal{Q} \leftarrow \mathcal{Q} \cup  \{ \underset{p \in \mathcal{P}}{\argmax} ~ \dd(p,\mathcal{Q}) \}$.
The output $\mathcal{Q}$ is $\varepsilon$-sparse and satisfies $\dd_{H}(\mathcal{P},\mathcal{Q})\leq \varepsilon$, so it is a $(\varepsilon,2\varepsilon)$-net of $M$. Therefore, up to the multiplicative constant $2$, sparsifying $\mathcal{P}$ at scale $\varepsilon$ will not deteriorate its density property. Then, we can run the farthest point sampling algorithm to preprocess the data, so that the obtained point cloud is a net.

\label{farthest_point_sampling}

\section{Stability Result}
\label{stability_section}
\subsection{Interpolation Theorem}
\label{interpolation_theorem_section}

As mentioned above, if the data do not lie exactly on $M$ and if we do not have the exact knowledge of the tangent spaces, Theorem \ref{Resultats_JDB} does not apply. To bypass this issue, we interpolate the data with another submanifold $M'$ satisfying good properties, as stated in the following result.
\begin{thm}[Interpolation]
\label{tangential_perturbation_thm}
Let $M \in \mathcal{M}_{D,d,\rho}$. Let $\mathcal{P} = \{p_1,\ldots,p_q\} \subset \R^D$  be a finite point cloud and $\tilde{T} = \left\{ \tilde{T}_1,\ldots,\tilde{T}_q \right\}$ be a family of $d$-dimensional linear subspaces of $\R^D$. For $\theta \leq   \pi/64$ and $18 \eta <\delta \leq \rho $, assume that
	\begin{itemize}
	\item $\mathcal{P}$ is $\delta$-sparse: $\underset{i\neq j}{\min} \norm{p_j - p_i} \geq  \delta$,
	\item the $p_j$'s are $\eta$-close to $M$: $\underset{1 \leq j \leq q}{\max} \dd(p_j,M) \leq \eta$,
	\item $\underset{1 \leq j \leq q}{\max} \: \angle(T_{\pi_M(p_j)} M, \tilde{T}_j) \leq \sin \theta$.
	\end{itemize}
	 Then, there exist a universal constant $c_0 \leq 285$ 
	 and a compact $d$-dimensional connected submanifold $M' \subset \mathbb{R}^D$ without boundary such that
	\begin{multicols}{2}
	\begin{enumerate}
	\item $\mathcal{P} \subset M'$,
	\item $\mathrm{reach}\bigl(M'\bigr) \geq \left( 1- c_0 \left(\frac{\eta}{\delta} +{\theta}\right) \frac{\rho}{\delta} \right) \rho,$
	\item $ T_{p_j} M' = \tilde{T}_j$ for all $1 \leq j \leq q$,
	\item $\dd_{H}(M,M') \leq \delta \theta + \eta$,
	\item $M$ and $M'$ are ambient isotopic.
	\end{enumerate}
	\end{multicols}	
\end{thm}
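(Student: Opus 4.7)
\noindent\textbf{Proof plan for Theorem \ref{tangential_perturbation_thm}.}

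The strategy is to build $M'$ as the image $\Phi(M)$ of a smooth ambient diffeomorphism $\Phi:\R^D\to\R^D$ that is close to the identity, agrees with prescribed affine motions at each $\pi(p_j)$, and is the identity away from small neighborhoods of the projected sample. All five conclusions will then be translated into estimates on $\Phi$ in $C^0$, $C^1$ and $C^2$.

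\medskip

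\noindent\emph{Step 1 (local affine correctors).}
For each $j$, set $u_j = p_j - \pi(p_j)$, so $\|u_j\|\le \eta$, and pick an orthogonal map $R_j$ of $\R^D$ of minimal rotation angle with $R_j(T_{\pi(p_j)}M) = \tilde{T}_j$; such $R_j$ exists with $\|R_j - I\|_{\mathrm{op}} \le c\,\theta$ because $\angle(T_{\pi(p_j)}M,\tilde{T}_j)\le \theta \le \pi/64$. Define the local affine map
\[
\Phi_j(x) = \pi(p_j) + u_j + R_j(x-\pi(p_j)),
\]
so that $\Phi_j(\pi(p_j)) = p_j$ and $d_{\pi(p_j)}\Phi_j(T_{\pi(p_j)}M) = \tilde T_j$.

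\medskip

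\noindent\emph{Step 2 (globalization via bump functions).}
Since $\mathcal{P}$ is $\delta$-sparse and $\eta$-close to $M$, the projections satisfy $\|\pi(p_i)-\pi(p_j)\|\ge \delta - 2\eta \ge \tfrac{8}{9}\delta$ by the hypothesis $18\eta<\delta$, so the balls $\mathcal{B}(\pi(p_j),r)$ are disjoint for $r = \delta/6$, say. Pick a smooth bump $\chi_j$ equal to $1$ on $\mathcal{B}(\pi(p_j),r/2)$, supported in $\mathcal{B}(\pi(p_j),r)$, with $\|\nabla\chi_j\|_\infty\lesssim 1/r$ and $\|\nabla^2\chi_j\|_\infty\lesssim 1/r^2$. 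Set
\[
\Phi(x) = x + \sum_{j=1}^q \chi_j(x)\bigl(\Phi_j(x)-x\bigr).
\]
Because the $\chi_j$ have disjoint supports, on $\mathcal{B}(\pi(p_j),r/2)$ we have $\Phi = \Phi_j$, so $\Phi(\pi(p_j)) = p_j$ and $d_{\pi(p_j)}\Phi = R_j$. This immediately yields conclusion (1) (points $p_j$ lie in $M'=\Phi(M)$) and conclusion (3) (the prescribed tangent spaces), once we know $\Phi$ is a diffeomorphism.

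\medskip

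\noindent\emph{Step 3 ($C^0$, $C^1$, $C^2$ estimates on $\Phi-\mathrm{id}$).}
On each bump support one computes
\[
\|\Phi_j(x)-x\| \le \|R_j-I\|_{\mathrm{op}}\cdot r + \eta \lesssim \theta\delta + \eta,
\]
and then multiplication/differentiation through $\chi_j$ yields
\[
\|\Phi-\mathrm{id}\|_\infty \lesssim \theta\delta + \eta,
\qquad
\|d\Phi - I\|_\infty \lesssim \theta + \eta/\delta,
\qquad
\|d^2\Phi\|_\infty \lesssim \theta/\delta + \eta/\delta^2,
\]
the orders matching those appearing in the reach bound. For $\eta/\delta$ and $\theta$ small enough (which is in force since $18\eta<\delta\le \rho$ and $\theta\le \pi/64$), $d\Phi$ is invertible and $\Phi$ is a global $C^\infty$ diffeomorphism of $\R^D$. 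The linear interpolation $\Phi_t = (1-t)\,\mathrm{id} + t\Phi$ then provides an ambient isotopy from $\mathrm{id}$ to $\Phi$ through diffeomorphisms, giving conclusion (5). The $C^0$ bound $\|\Phi-\mathrm{id}\|_\infty \le \theta\delta + \eta$ directly produces $\dd_\mathrm{H}(M,\Phi(M)) \le \theta\delta + \eta$, yielding conclusion (4).

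\medskip

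\noindent\emph{Step 4 (reach control --- the hard step).}
This is the main obstacle. Write the reach of $M'=\Phi(M)$ as a combination of a local and a semi-global contribution. For the \emph{curvature} (local) part, the second fundamental form transforms as $II_{M'} = (d\Phi|_{TM})^{-t}(II_M\circ d\Phi^{-1}) + \text{corrections involving }d^2\Phi$, so $\|II_{M'}\|_{\mathrm{op}}$ is bounded by $\|d\Phi\|^2\|d\Phi^{-1}\|\rho^{-1} + \|d\Phi^{-1}\| \|d^2\Phi\|_\infty$. Inserting the estimates of Step 3 gives a bound of the form $\rho^{-1}(1+c(\theta+\eta/\delta)) + c'(\theta/\delta+\eta/\delta^2)$. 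For the \emph{bottleneck} part, one uses that $\dd(a,b)$ and $\dd(\Phi(a),\Phi(b))$ are distorted by a factor between $1-c_1(\theta+\eta/\delta)$ and $1+c_1(\theta+\eta/\delta)$, which forbids new medial-axis points closer than $\rho\bigl(1-c_1(\theta+\eta/\delta)\bigr)^2$ to $M'$. Combining via Federer's characterization $\mathrm{reach}(M') = \min(\mathrm{reach}_\mathrm{local},\mathrm{reach}_\mathrm{bottleneck})$ produces exactly the quotient in (2); I expect the bookkeeping of the constants $c_1\le 11$ and $c_2\le 252$ to be the most delicate part and to require explicit tracking through Steps 1--3 rather than generic estimates.
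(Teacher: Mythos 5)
Your Steps 1--3 and the isotopy argument are essentially the paper's own construction: a rotation $R_j$ with $\norm{R_j-I_D}_\mathrm{op}\leq\theta$ mapping $T_{\pi(p_j)}M$ onto $\tilde T_j$ plus the translation $p_j-\pi(p_j)$, glued by bumps supported in disjoint balls around the $\pi(p_j)$ (disjointness from $\delta$-sparsity and $18\eta<\delta$; the paper uses radius $\ell=\delta/3$ and notes at most one term of the sum is active at any point), the resulting $C^0$, $C^1$, $C^2$ bounds of the same orders, global invertibility by a Hadamard-type argument, $\dd_\mathrm{H}(M,M')\leq\delta\theta+\eta$ from the $C^0$ bound, and the isotopy via $\Phi_{(t)}=\mathrm{id}+t(\Phi-\mathrm{id})$. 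Conclusions (1), (3), (4), (5) are therefore handled correctly and in the same way as the paper.

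The gap is your Step 4. The paper does not decompose the reach into curvature and bottleneck contributions at all: it invokes Federer's Theorem 4.19 (Lemma \ref{reach_stability} in the appendix), which states that if $\Phi$, $\Phi^{-1}$ and $\dd\Phi$ are Lipschitz with constants $K$, $N$, $R$, then $\mathrm{reach}(\Phi(A))\geq 1/\bigl((K\rho^{-1}+R)N^2\bigr)$; plugging in the Step-3 estimates with $\ell=\delta/3$ gives exactly the quotient in (2), with $c_1=3C_1\leq 11$ and $c_2=9C_2\leq 252$. Your alternative route is not established as written: the splitting $\mathrm{reach}(M')=\min(\mathrm{reach}_\mathrm{local},\mathrm{reach}_\mathrm{bottleneck})$ is not Federer's characterization but a later, nontrivial result requiring its own hypotheses; the transformation law you write for $II_{M'}$ is not correct as stated (the second fundamental form of $\Phi(M)$ involves projection onto the new normal bundle and does not reduce to the displayed formula plus ``corrections''); and the claim that bi-Lipschitz distance distortion by $1\pm c_1(\theta+\eta/\delta)$ forbids medial-axis points closer than $\rho(1-c_1(\theta+\eta/\delta))^2$ is asserted, not proved --- controlling bottlenecks of $\Phi(M)$ needs more than distance distortion, since new near-self-intersections are constrained by first-order (normal direction) information, not only by metric distortion. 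Finally, the assertion that combining the two bounds ``produces exactly the quotient in (2)'' with $c_1\leq 11$, $c_2\leq 252$ is precisely the part that needs proof. To close the argument, replace Step 4 by the reach-stability lemma of Federer applied with the Lipschitz constants you already derived in Step 3 (and note that $\norm{R_j-I_D}_\mathrm{op}\leq\theta$, not $c\theta$, if you want the stated numerical constants).
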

\begin{figure}[ht]
\centering
\includegraphics[clip = true, trim = 10mm 20mm 10mm 5mm, width=0.75\textwidth]{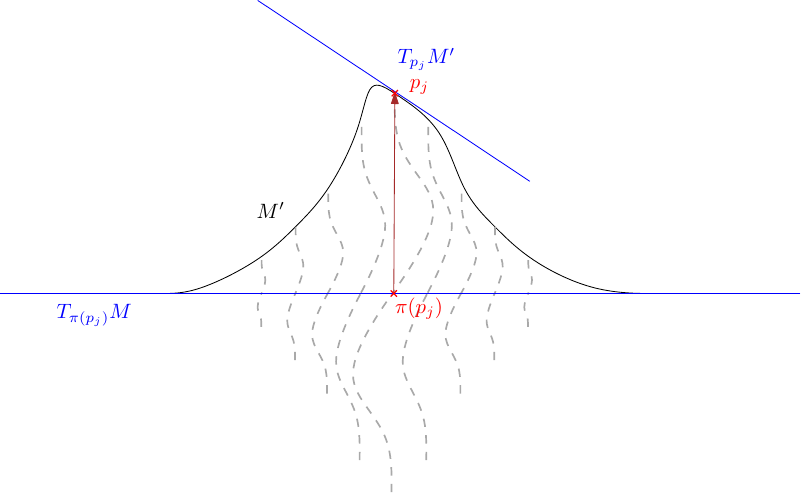}
\caption{An instance of the interpolating submanifold $M'$. Dashed lines correspond to the image of vertical lines by the ambient diffeomorphism $ \Phi $ defining $M' =  \Phi (M)$.}
\label{fig_bump}
\end{figure}
Theorem \ref{tangential_perturbation_thm} fits a submanifold $M'$ to noisy points and perturbed tangent spaces with no change of topology and a controlled reach loss. We will use $M'$ as a proxy for $M$. {\color{black}{Indeed, if $\tilde{T}_1,\ldots,\tilde{T}_q$ are estimated tangent spaces at the noisy base points $p_1,\ldots,p_q$, $M'$ has the virtue of being reconstructed by $\mathrm{Del}^{\omega_\ast}(\mathcal{P},\tilde{T})$ from Theorem \ref{Resultats_JDB}.}} Since $M'$ is topologically and geometrically close to $M$, we conclude that $M$ is reconstructed as well by transitivity. In other words, Theorem \ref{tangential_perturbation_thm} allows to consider a noisy sample with estimated tangent spaces as an exact sample with exact tangent spaces.
$M'$ is built pushing and rotating $M$ towards the $p_j$'s locally along the vector $\left(p_j - \pi(p_j\right))$, as illustrated in Figure \ref{fig_bump}. Since the construction is quite general and may be applied in various settings, let us provide an outline of the construction.

Let $\phi(x) = \exp \bigl(\frac{ \norm{x}^2}{\norm{x}^2-1} \bigr) \mathbbm{1}_{\norm{x}^2 < 1}$. $\phi$ is smooth and satisfies $\phi(0) = 1$, $\norm{\phi}_\infty \leq 1$ and $\dd_0 \phi = 0$. 
For $j = 1, \ldots, q$, it follows easily from the definition of $\angle(T_{\pi(p_j)} M, \tilde{T}_j)$ --- \textit{e.g.} by induction on the dimension --- that there exists a rotation $R_j$ of $\R^D$ mapping $T_{\pi(p_j)} M$ onto $\tilde{T}_j$ that satisfies $\norm{R_j-I_D}_{op} \leq 2 \sin \left( \theta / 2 \right) \leq \theta$. 
For $\ell > 0$ to be chosen later, and all $a \in \R^D$, let us define $ \Phi  : \R^D \rightarrow \R^D$ by
\begin{align*}
\Phi(a) &= a + \sum_{j=1}^q \phi \left(\frac{a-\pi(p_j)}{\ell} \right) \bigl[\underbrace{(R_j-I_D)(a - \pi(p_j)) + (p_j - \pi(p_j))}_{\psi_j(a)}\bigr].
\end{align*}
$\Phi $ is designed to map $\pi(p_j)$ onto $p_j$ with $\dd_{\pi(p_j)} \Phi = R_j$.
Roughly speaking, in balls of radii $\ell$ around each $\pi(p_j)$, $ \Phi $ shifts the points in the direction $p_j - \pi(p_j)$ and rotates it around $\pi(p_j)$.
Off these balls, $ \Phi $ is the identity map.
To guarantee smoothness, the shifting and the rotation are modulated by the kernel $\phi$, as $\norm{a - \pi(p_j)}$ increases.
Notice that $\dd_a \psi_j = (R_j - I_D)$ and $\norm{\psi_j(a)} \leq \ell \theta + \eta$ whenever $\phi\left(\frac{a-\pi(p_j)}{\ell} \right) \neq 0$.
Defining $M' =  \Phi (M)$, the facts that $M'$ fits to $\mathcal{P}$ and $\tilde{T}$ and is Hausdorff-close to $M$ follow by construction. Moreover, Theorem 4.19 of \cite{federer1959} (reproduced as Lemma \ref{reach_stability} in this paper) states that the reach is stable with respect to $\mathcal{C}^2$-diffeomorphisms of the ambient space. The estimate on $\mathrm{reach}(M')$ relies on the following lemma stating differentials estimates on $\Phi$.

\begin{lem}
\label{bump_lem}
There exist universal constants $C_1\leq 7/2$ and $C_2\leq 28$ such that if $ 6 \eta < \ell \leq \delta/3$ and $\theta \leq \pi/64$,  $ \Phi  : \mathbb{R}^D \longrightarrow \mathbb{R}^D$ is a global $\mathcal{C}^\infty$-diffeomorphism. In addition, for all $a$ in $\R^D$,
\begin{gather*}
 \norm{\dd_a  \Phi }_\mathrm{op} \leq 1 + C_1 \left(\frac{\eta}{\ell} + \theta \right), ~~
	\norm{\dd_a  \Phi ^{-1}}_\mathrm{op} \leq \frac{1}{1-C_1 \left(\frac{\eta}{\ell} + \theta \right)}, ~~
	\norm{\dd^2_a \Phi}_\mathrm{op} \leq C_2 \left(\frac{\eta}{\ell^2} +\frac{\theta}{\ell}\right).
\end{gather*}

\end{lem}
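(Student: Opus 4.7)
The plan is to exploit the fact that the bumps in the sum defining $\Phi$ have disjoint supports, reducing the analysis to a single-bump computation at each point, then read off the derivative bounds from the explicit formula, and finally invoke Hadamard's theorem for global invertibility.

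First I would check that under the hypotheses $18 \eta < \delta$, $6\eta < \ell \leq \delta/3$, the balls $\mathcal{B}(\pi(p_j),\ell)$ are pairwise disjoint. Indeed $\|\pi(p_i) - \pi(p_j)\| \geq \|p_i - p_j\| - 2 \eta \geq \delta - 2 \eta$, and the bounds on $\eta,\ell$ force $\delta - 2\eta > 2 \ell$. Since $\phi$ is supported in the unit ball, this means that for each $a \in \R^D$ at most one index $j$ contributes to the sum. Writing $u_j(a) = (a - \pi(p_j))/\ell$ and noting $d_a \psi_j = R_j - I_D$ and $d_a^2 \psi_j = 0$, the differential against any such active index reads
\begin{equation*}
d_a \Phi[h] = h + \tfrac{1}{\ell} d\phi(u_j(a))[h] \cdot \psi_j(a) + \phi(u_j(a)) \cdot (R_j - I_D)[h],
\end{equation*}
and similarly the second differential is
\begin{equation*}
d_a^2 \Phi[h,k] = \tfrac{1}{\ell^2}\, d^2\phi(u_j(a))[h,k]\, \psi_j(a) + \tfrac{1}{\ell} d\phi(u_j(a))[h] (R_j - I_D)[k] + \tfrac{1}{\ell} d\phi(u_j(a))[k] (R_j - I_D)[h].
\end{equation*}

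Second, I would estimate the building blocks. By construction $\|R_j - I_D\|_{\mathrm{op}} \leq \theta$, and on the support of $\phi$, $\|\psi_j(a)\| \leq \theta \ell + \eta$. Setting $K_1 = \|d\phi\|_\infty$ and $K_2 = \|d^2\phi\|_\infty$, both universal from the explicit formula for $\phi$, one gets
\begin{equation*}
\|d_a\Phi - I_D\|_{\mathrm{op}} \leq (K_1 + 1) \theta + K_1\, \tfrac{\eta}{\ell}, \qquad \|d_a^2 \Phi\|_{\mathrm{op}} \leq (K_2 + 2 K_1)\, \tfrac{\theta}{\ell} + K_2\, \tfrac{\eta}{\ell^2},
\end{equation*}
which yields the claimed first and third estimates with $C_1 = K_1 + 1$ and $C_2 = K_2 + 2K_1$. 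For the inverse bound, observe that with $\theta \leq \pi/64$ and $\eta/\ell < 1/6$ we have $C_1(\theta + \eta/\ell) < 1$, so $d_a \Phi$ is invertible by the Neumann series, giving $\|d_a \Phi^{-1}\|_{\mathrm{op}} \leq 1/(1 - C_1(\eta/\ell + \theta))$.

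Finally, to conclude that $\Phi$ is a global $\mathcal{C}^\infty$-diffeomorphism, I would argue as follows: $\Phi$ is $\mathcal{C}^\infty$ (sum of smooth terms), $d_a\Phi$ is invertible at every $a$ by the previous step, and $\Phi$ is proper since $\Phi(a) = a$ outside the bounded set $\bigcup_j \mathcal{B}(\pi(p_j),\ell)$. A proper local diffeomorphism of $\R^D$ onto itself is a covering map, and since $\R^D$ is simply connected this covering is trivial, i.e.\ $\Phi$ is a global diffeomorphism (Hadamard's theorem). The main obstacle will be purely computational: one must certify the explicit numerical bounds $C_1 \leq 7/2$ and $C_2 \leq 28$ by evaluating $K_1 = \|d\phi\|_\infty$ and $K_2 = \|d^2\phi\|_\infty$ for the specific kernel $\phi(x) = \exp(\|x\|^2/(\|x\|^2-1)) \mathbbm{1}_{\|x\|<1}$; this is a one-variable calculus optimization in the radial parameter $r = \|x\|$ and is where all the arithmetic has to be done carefully.
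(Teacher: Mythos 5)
Your proposal is correct and follows essentially the same route as the paper: a single active bump per point by sparsity, product-rule expansion of $\dd_a\Phi$ and $\dd_a^2\Phi$ using $\norm{\psi_j(a)}\leq \ell\theta+\eta$ and $\norm{R_j-I_D}_\mathrm{op}\leq\theta$, a Neumann-series bound for the inverse, and Hadamard--Cacciopoli (properness plus local invertibility) for the global diffeomorphism. The only piece you defer --- the radial optimization certifying $\norm{\dd\phi}_\infty$ and $\norm{\dd^2\phi}_\infty$, hence $C_1\leq 7/2$ and $C_2\leq 28$ --- is exactly what the paper itself treats as a straightforward computation.
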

The ambient isotopy follows easily by considering the weighted version $ \Phi _{(t)}(a) = a + t\left(\Phi(a)-a\right)$ for $0\leq t\leq 1$ and the same differential estimates. We then take the maximum possible value $\ell = \delta/3$ and $M' = \Phi(M)$.
\begin{rem}
Changing slightly the construction of $M'$, one can also build it such that the curvature tensor at each $p_j$ corresponds to that of $M$ at $\pi(p_j)$. For this purpose it suffices to take a localizing function $\phi$ identically equal to $1$ in a neighborhood of $0$. This additional condition would impact the universal constant $c_0$ appearing in Theorem \ref{tangential_perturbation_thm}.
\end{rem}

\subsection{Stability of the Tangential Delaunay Complex}
\label{consequence_delaunay_section}
Theorem \ref{tangential_perturbation_thm} shows that even in the presence of noisy sample points at distance $\eta$ from $M$, and with the knowledge of the tangent spaces up to some angle $\theta$, it is still possible to apply Theorem \ref{Resultats_JDB} to some virtual submanifold ${M}'$. Denoting $\tilde{M} = \mathrm{Del}^{\omega_\ast}(\mathcal{P},\tilde{T})$, since $\dd_{H}(M,\tilde{M}) \leq \dd_{H}(M,M') + \dd_{H}(M',\tilde{M})$ and since the ambient isotopy relation is transitive, $M \cong M' \cong \tilde{M}$. We get the following result as a straightforward combination of Theorem \ref{Resultats_JDB} and Theorem \ref{tangential_perturbation_thm}.

\begin{thm}[Stability of the Tangential Delaunay Complex]\label{tdc_stability}
There exists $\varepsilon_{d}>0$ such that for all $\varepsilon \leq \varepsilon_{d}\rho$ and all $M \in \mathcal{M}_{D,d,\rho}$, the following holds. 
Let $\mathcal{P} \subset \R^D$ finite point cloud and $\tilde{T} = \left\{\tilde{T}_p\right\}_{p\in \mathcal{P}}$ be a family of $d$-dimensional linear subspaces of $\R^D$ such that
\begin{multicols}{2}
\begin{itemize}
\item $\underset{p \in \mathcal{P}}{\max} \: \dd(p,M) \leq \eta$,
\item $\underset{p \in \mathcal{P}}{\max} \: \angle (T_{\pi_M(p)} {M},\tilde{T}_p ) \leq \sin \theta$, 
\item $\mathcal{P}$ is $\varepsilon$-sparse,
\item $\underset{x\in M}{\max} \: \dd(x,\mathcal{P}) \leq  2\varepsilon$.
\end{itemize}
\end{multicols}
\noindent If $\theta \leq \varepsilon/(1140 \rho)$ and $\eta \leq \varepsilon^2/(1140 \rho)$, then,
\begin{itemize}
\item $\dd_{H} \left( M, \mathrm{Del}^{\omega_\ast}(\mathcal{P},\tilde{T}) \right) 
\leq 
C_{d}\varepsilon^2/\rho$,
\item $M$ and $\mathrm{Del}^{\omega_\ast}(\mathcal{P},\tilde{T})$ are ambient isotopic.
\end{itemize}
\end{thm}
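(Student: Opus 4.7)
The plan is to use the Interpolation Theorem (Theorem \ref{tangential_perturbation_thm}) as a bridge between the noisy/approximate data $(\mathcal{P},\tilde T)$ and the idealized setting in which Theorem \ref{Resultats_JDB} applies. The idea is to build a proxy manifold $M'$ that (i) exactly contains $\mathcal{P}$, (ii) has $\tilde{T}_p$ as its tangent space at every $p\in\mathcal{P}$, and (iii) is both geometrically ($\dd_\mathrm{H}$-close) and topologically (ambient isotopic) close to $M$. Theorem \ref{Resultats_JDB} applied to $M'$ will then give the reconstruction guarantees, and the triangle inequality together with transitivity of $\cong$ will transport them back to $M$.

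Concretely, I would set $\delta = \varepsilon$, $\eta = \tau_1 \varepsilon^2$, and $\theta = \tau_2 \varepsilon$, and first check the hypotheses of Theorem \ref{tangential_perturbation_thm}. For $\varepsilon \leq \varepsilon_{d,\rho,\tau_1,\tau_2}$ small enough, $18\eta = 18\tau_1\varepsilon^2 < \varepsilon = \delta$ and $\delta \leq \rho$, while $\theta = \tau_2\varepsilon \leq \pi/64$; the other two assumptions (sparsity, $\eta$-closeness, angle bound) come straight from the hypotheses of Theorem \ref{tdc_stability}. Theorem \ref{tangential_perturbation_thm} then produces $M'$ with $\mathcal{P} \subset M'$, $T_{p_j}M' = \tilde{T}_j$, $\dd_\mathrm{H}(M,M') \leq \delta\theta + \eta = (\tau_1+\tau_2)\varepsilon^2$, $M \cong M'$, and a reach bound which, for $\varepsilon$ small, reduces to a uniform $\mathrm{reach}(M') \geq \rho'_{\rho,\tau_1,\tau_2}$ with $\rho'_{\rho,\tau_1,\tau_2} \to \rho/(1+c_2(\tau_1+\tau_2)\rho)$ as $\varepsilon \to 0$.

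Next, I would verify that $\mathcal{P}$ is a net of $M'$ of the shape required by Theorem \ref{Resultats_JDB}. Sparsity transfers verbatim. For density in $M'$, any $x'\in M'$ admits some $x\in M$ with $\norm{x-x'} \leq \dd_\mathrm{H}(M,M') \leq (\tau_1+\tau_2)\varepsilon^2$, and then a $p\in\mathcal{P}$ with $\norm{x-p}\leq 2\varepsilon$, so $\dd(x',\mathcal{P}) \leq 2\varepsilon + (\tau_1+\tau_2)\varepsilon^2$. Shrinking $\varepsilon_{d,\rho,\tau_1,\tau_2}$ further, this qualifies $\mathcal{P}$ as an $(\varepsilon,2\varepsilon(1+\alpha))$-net of $M'$ for any prescribed $\alpha>0$, which lets us apply Theorem \ref{Resultats_JDB} to $M' \in \mathcal{M}_{D,d,\rho'_{\rho,\tau_1,\tau_2}}$ with the tangent-space family $\tilde{T} = \{T_pM'\}_{p\in\mathcal{P}}$. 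This yields a weight assignment $\omega_\ast$ with $\dd_\mathrm{H}(M',\mathrm{Del}^{\omega_\ast}(\mathcal{P},\tilde{T})) \leq C_{d,\rho'_{\rho,\tau_1,\tau_2}} \varepsilon^2$ and $M' \cong \mathrm{Del}^{\omega_\ast}(\mathcal{P},\tilde{T})$. Combining, $\dd_\mathrm{H}(M,\mathrm{Del}^{\omega_\ast}(\mathcal{P},\tilde{T})) \leq (\tau_1+\tau_2)\varepsilon^2 + C_{d,\rho'_{\rho,\tau_1,\tau_2}}\varepsilon^2$ and $M \cong M' \cong \mathrm{Del}^{\omega_\ast}(\mathcal{P},\tilde{T})$ by transitivity of ambient isotopy, which is exactly the conclusion.

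The main obstacle I anticipate is the mild mismatch in the density parameter when passing from $M$ to $M'$: the hypothesis only gives $2\varepsilon$-density on $M$, while one naturally obtains $(2\varepsilon+O(\varepsilon^2))$-density on $M'$. This is not structural, but it forces one to either absorb the extra second-order term by further reducing $\varepsilon_{d,\rho,\tau_1,\tau_2}$ and repackaging into the Boissonnat--Ghosh constant, or to invoke a small stability extension of Theorem \ref{Resultats_JDB} to $(\varepsilon,\lambda\varepsilon)$-nets for $\lambda$ slightly above $2$. Everything else is bookkeeping: tracking that the constants $c_1,c_2$ from Theorem \ref{tangential_perturbation_thm} and $C_{d,\rho}$ from Theorem \ref{Resultats_JDB} depend only on $d$, $\rho$, $\tau_1$, $\tau_2$, and that the isotopies provided by Theorem \ref{tangential_perturbation_thm} and Theorem \ref{Resultats_JDB} can be concatenated.
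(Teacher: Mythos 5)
Your proposal is correct and matches the paper's argument exactly: set $\delta = \varepsilon$, $\eta = \tau_1\varepsilon^2$, $\theta = \tau_2\varepsilon$, build the proxy manifold $M'$ via Theorem~\ref{tangential_perturbation_thm}, apply Theorem~\ref{Resultats_JDB} to $M'$ with tangent family $\tilde{T} = \left\{T_p M'\right\}_{p\in\mathcal{P}}$, and combine via the triangle inequality for $\dd_\mathrm{H}$ and transitivity of $\cong$. Your caveat about the density of $\mathcal{P}$ in $M'$ being $2\varepsilon + O(\varepsilon^2)$ rather than exactly $2\varepsilon$ is a mild wrinkle that the paper elides as a ``straightforward combination''; the repackaging you propose handles it correctly.
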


Indeed, applying the reconstruction algorithm of Theorem \ref{Resultats_JDB} even in the presence of noise and uncertainty on the tangent spaces actually recovers the submanifold $M'$ built in Theorem \ref{tangential_perturbation_thm}. $M'$ is isotopic to $M$ and the quality of the approximation of $M$ is at most impacted by the term $\dd_{H}(M,M') \leq \varepsilon \theta + \eta$. 
The lower bound on $\mathrm{reach}(M')$ is crucial, as constants appearing in Theorem \ref{Resultats_JDB} are not bounded for arbitrarily small reach. 

It is worth noting that no extra analysis of the Tangential Delaunay Complex was needed to derive its stability. The argument is global, constructive, and may be applied to other reconstruction methods taking tangent spaces as input. 
For instance, a stability result similar to Theorem \ref{tdc_stability} could be derived readily for the so-called \textit{cocone} complex \cite{Dey05} using the interpolating submanifold of Theorem \ref{tangential_perturbation_thm}.

\section{Tangent Space Estimation and Decluttering Procedure}
\label{tangent_space_estimation_and_denoising_procedure}

\subsection{Additive Noise Case}
We now focus on the estimation of tangent spaces in the model with additive noise $\mathcal{G}_{D,d,f_{min},f_{max},\rho,\sigma}$. The proposed method is similar to that of \cite{Arias13,Maggioni16}.
 A point $p \in M$ being fixed, $T_p M$ is the best local $d$-dimensional linear approximation of $M$ at $p$. Performing a Local Principal Component Analysis (PCA) in a neighborhood of $p$ is likely to recover the main directions spanned by $M$ at $p$, and therefore yield a good approximation of $T_p M$. For $j=1,\ldots,n$ and $h>0$ to be chosen later, define the local covariance matrix at $X_j$ by
	\[
	\hat{\Sigma}_j(h) = \frac{1}{n-1} \sum_{i \neq j} \left ( X_i - \bar{X}_j \right ) \left (X_i - \bar{X}_j\right )^t \mathbbm{1}_{\mathcal{B}(X_j,h)}(X_i),
	\]
 where $\bar{X}_j= \frac{1}{N_j} \sum_{i \neq j} {X_i \mathbbm{1}_{\mathcal{B}(X_j,h)}(X_i)}$ is the barycenter of sample points contained in the ball $\mathcal{B}(X_j,h)$, and  $N_j = |\mathcal{B}(X_j,h)\cap \X_n|$.
 Let us emphasize the fact that the  normalization $1/(n-1)$ in the definition of $\hat{\Sigma}_j$ stands for technical convenience. In fact, any other normalization would yield the same guarantees on tangent spaces since only the principal directions of $\hat{\Sigma}_j$ play a role.
Set $\hat{T}_j(h)$ to be the linear space spanned by the $d$ eigenvectors associated with the $d$ largest eigenvalues of $\hat{\Sigma}_j(h)$. Computing a basis of $\hat{T}_j(h)$ can be performed naively using a singular value decomposition of the full matrix $\hat{\Sigma}_j(h)$, although fast PCA algorithms \cite{Sharma07} may lessen the computational dependence on the ambient dimension. We also denote by $\mathtt{TSE}(.,h)$ the function that maps any vector of points to the vector of their estimated tangent spaces, with
\[
\hat{T}_j(h) = \mathtt{TSE}(\X_n,h)_j.
\] 
\begin{prop}\label{tangent_space_rate_nonoise}
Set $h = \left (c_{d,f_{min},f_{max}} \frac{\log n}{n-1} \right)^{1/d}$ for $c_{d,f_{min},f_{max}}$ large enough. 
Assume that $\sigma/h \leq 1/4$. Then for $n$ large enough, for all $Q \in \mathcal{G}_{D,d,f_{min},f_{max},\rho,\sigma}$,
\[
\max_{1\leq j \leq n} \angle \left( T_{\pi_M(X_j)} {M} , \hat{T}_j(h) \right) \leq C_{d,f_{min},f_{max}} \left( \frac{h}{\rho} + \frac{\sigma}{h} \right),
\]
with probability larger than $1-4\left ( \frac{1}{n} \right )^{\frac{2}{d}}$.
\end{prop}

An important feature given by Proposition \ref{tangent_space_rate_nonoise} is that the statistical error of our tangent space estimation procedure does not depend on the ambient dimension $D$. The intuition behind Proposition \ref{tangent_space_rate_nonoise} is the following: if we assume that the true tangent space $T_{X_j}M$ is spanned by the first $d$ vectors of the canonical basis, we can decompose $\hat{\Sigma}_j$ as
\begin{align*}
\hat{\Sigma}_j(h) =  \left ( \begin{array}{@{}c|c}
      \hat{A}_j(h) & 0 \\
      \hline
      0 & 0
      \end{array}
      \right ) + \hat{R},
\end{align*}
where $\hat{R}$ comes from the curvature of the submanifold along with the additive noise, and is of order $N_j(h)(h^{3}/(\rho(n-1))+h \sigma) \lesssim h^{d+2}(h/\rho + \sigma/h)$, provided that $h$ is roughly smaller than  $(\log(n)/(n-1))^{1/d}$. On the other hand, for a bandwidth $h$ of order $(\log(n)/(n-1))^{1/d}$, $\hat{A}_j(h)$ can be proved (Lemma \ref{Concentration}) to be close to its deterministic counterpart 
\[
A_j(h) = \mathbb{E} \left( \left ( \pi_{T_{X_j} M} (X) - \mathbb{E} \pi_{T_{X_j} M}(X) \right )\left ( \pi_{T_{X_j} M}(X) - \mathbb{E} \pi_{T_{X_j} M}(X) \right )^{t} \mathbbm{1}_{\mathcal{B}(X_j,h)}(X)\right  ),
\]
where $\pi_{T_{X_j} M}$ denotes orthogonal projection onto $T_{X_j}M$ and expectation is taken conditionally on $X_j$. The bandwidth $(\log(n)/(n-1))^{1/d}$ may be thought of as the smallest radius that allows enough sample points in balls to provide an accurate estimation of the covariance matrices. Then, since $f_{min} >0$, Lemma \ref{expectations} shows that the minimum eigenvalue of $A(h)$ is of order $h^{d+2}$. At last, an eigenvalue perturbation result (Proposition \ref{angledeviation}) shows that $\hat{T}_j(h)$ must be close to $T_{X_j}M$ up to $(h^{d+3}/\rho + h^{d+1} \sigma)/(h^{d+2}) \approx h/\rho + \sigma/h$. The complete derivation is provided in Section \ref{proofofpropositiontangentspaceratenonoise}.

Then, it is shown in Lemma \ref{epsilon_rate}, based on the results of \cite{Chazal2013}, that letting $\varepsilon = c_{d,f_{min},f_{max}} (h \vee \rho \sigma/h)$ for $c_{d,f_{min},f_{max}}$ large enough, entails $\X_n$ is $\varepsilon$-dense in $M$ with probability larger than $1 - \left(\frac{1}{n}\right)^{2/d}$. 
Since $\X_n$ may not be sparse at the scale $\varepsilon$, and for the stability reasons described in Section \ref{tangential_delaunay_complex_section}, we sparsify it with the farthest point sampling algorithm (Section \ref{farthest_point_sampling}) with scale parameter $\varepsilon$. 
Let $\Y_n$ denote the output of the algorithm. If $\sigma \leq h/4$, and $c_{d,f_{min},f_{max}}$ is large enough, we have the following.
\begin{cor}\label{recap_nonoise}
With the above notation, for $n$ large enough, with probability at least $1-5\left( \frac{1}{n} \right)^{2/d}$, 
\begin{multicols}{2}
\begin{itemize}
\item $\underset{X_j \in \Y_n}{\max} \: \dd(X_j,M) \leq \frac{\varepsilon^2}{1140 \rho}$,
\item $\underset{X_j \in \Y_n}{\max} \angle ( T_{\pi_M(X_j)} {M},\hat{T}_j(h) ) \leq  \frac{\varepsilon}{2280\rho}$, 
\item $\Y_n$ is $\varepsilon$-sparse,
\item $\underset{x\in M}{\max} \: \dd(x,\Y_n) \leq 2 \varepsilon$.
\end{itemize}
\end{multicols}
\end{cor}
In other words, the previous result shows that $\Y_n$ satisfies the assumptions of Theorem \ref{tdc_stability} with high probability. 
We may then define $\hat{M}_{\mathtt{TDC}}$ to be the Tangential Delaunay Complex computed on $\Y_n$ and the collection of estimated tangent spaces $\mathtt{TSE}(\X_n,h)_{\Y_n}$, that is elements of $\mathtt{TSE}(\X_n,h)$ corresponding to elements of $\Y_n$, where $h$ is the bandwidth defined in Proposition \ref{tangent_space_rate_nonoise}.

\begin{Def} With the above notation, define $\hat{M}_{\mathtt{TDC}} = \mathrm{Del}^{\omega_\ast}\left(\Y_n,\mathtt{TSE}(\X_n,h)_{\Y_n} \right)$.
\end{Def}
Combining Theorem \ref{tdc_stability} and Corollary \ref{recap_nonoise}, it is clear that $\hat{M}_{\mathtt{TDC}}$ satisfies Theorem \ref{main_result_noob_nonoise}.

\subsection{Clutter Noise Case}
\label{clutter_denoise_section}
Let us now focus on the model with outliers $\mathcal{O}_{D,d,f_{min},f_{max},\rho,\beta}$. We address problem of decluttering the sample $\X_n$, that is, to remove outliers. We follow ideas from \cite{Genovese12}. To distinguish whether $X_j$ is an outlier or belongs to $M$, we notice again that points drawn from $M$ approximately lie on a low dimensional structure. On the other hand, the neighborhood points of an outlier drawn far away from $M$ should typically be distributed in an isotropic way. Let $k_1,k_2,h>0$, $x\in \R^D$ and $T\subset \R^D$ a $d$-dimensional linear subspace. The \textit{slab} at $x$ in the direction $T$ is the set $S(x,T,h) = \left\{x\right\} \oplus \mathcal{B}_{T}\left(0,k_1h\right) \oplus  \mathcal{B}_{T^\perp}\!\left(0,k_2h^2\right) \subset \R^D$, where $\oplus$ denotes the Minkovski sum, and $\mathcal{B}_T,\mathcal{B}_{T^\perp}$ are the Euclidean balls in $T$ and $T^\perp$ respectively.
\begin{figure}[H]
\centering
\includegraphics[width=0.75\textwidth]{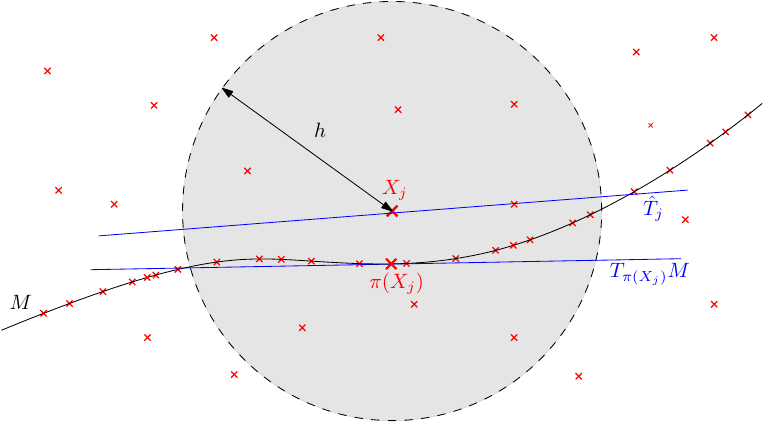}
\caption{Local PCA at an outlier point $X_j \in \X_n$.}
\label{pca_illustration}
\end{figure}
Following notation of Section \ref{statistical_model_section}, for $P \in \mathcal{O}_{D,d,f_{min},f_{max},\rho,\beta}$, let us write $P = \beta Q + (1-\beta)U_{\mathcal{B}_0}$.
For $h$ small enough, by definition of the slabs, $U_{\mathcal{B}_0}\left( S(x,T_{\pi(x)} M,h) \right) \asymp (k_1h)^d (k_2 h^2)^{D-d} \asymp h^{2D-d}$. Furthermore, Figure \ref{slab_illustration} indicates that for $k_1$ and $k_2$ small enough, $Q \left( S(x,T_{\pi(x)} M,h) \right) \asymp Vol\left( S(x,T_{\pi(x)} M,h) \cap M \right) \asymp h^d$ if $\dd(x,M) \leq  h^2$, and $Q \left( S(x,T_{\pi(x)} M,h) \right) = 0$ if $\dd(x,M)> h^2$.
Coming back to $P = \beta Q + (1-\beta)U_{\mathcal{B}_0}$, we roughly get 
\begin{align*}
\begin{array}{@{}lcclrc}
P\left(S(x,T_{\pi(x)} M,h)\right) \asymp & \beta h^d  & + (1-\beta) h^{2D-d} & \asymp h^d & \text{~if~} & \dd(x,M) \leq  h^2,\\
P\left(S(x,T_{\pi(x)} M,h)\right) \asymp & 0 		  & + (1-\beta) h^{2D-d} & \asymp h^{2D-d} &  \text{~if~} & \dd(x,M)> h^2,
 \end{array}
\end{align*}
as $h$ goes to $0$, for $k_1$ and $k_2$ small enough. Since $h^{2D-d} \ll h^{d}$, the measure $P\left(S(x,T,h)\right)$ of the slabs clearly is discriminatory for decluttering, provided that tangent spaces are known.

Based on this intuition, we define the elementary step of our decluttering procedure as the map $\mathtt{SD}_t(.,.,h)$, that sends a vector $P = \left (p_1, \hdots, p_r  \right ) \subset \mathbb{R}^D$ and a corresponding vector of (estimated) tangent spaces $T_\mathcal{P} =( T_{1}, \hdots, T_r )$ onto a subvector of $\mathcal{P}$ according to the rule
        \[
        p_j \in \mathtt{SD}_t(\mathcal{P}, T_\mathcal{P},h) \quad \Leftrightarrow \quad |S(p_j,T_j,h) \cap \mathcal{P} | \geq t(n-1)h^d,
        \]
        where $t$ is a threshold to be fixed. This procedure relies on counting how many sample points lie in the slabs of direction the estimated tangent spaces (see Figure \ref{slab_illustration}).
        \begin{figure}[ht]
\centering
\includegraphics[width=0.9\textwidth]{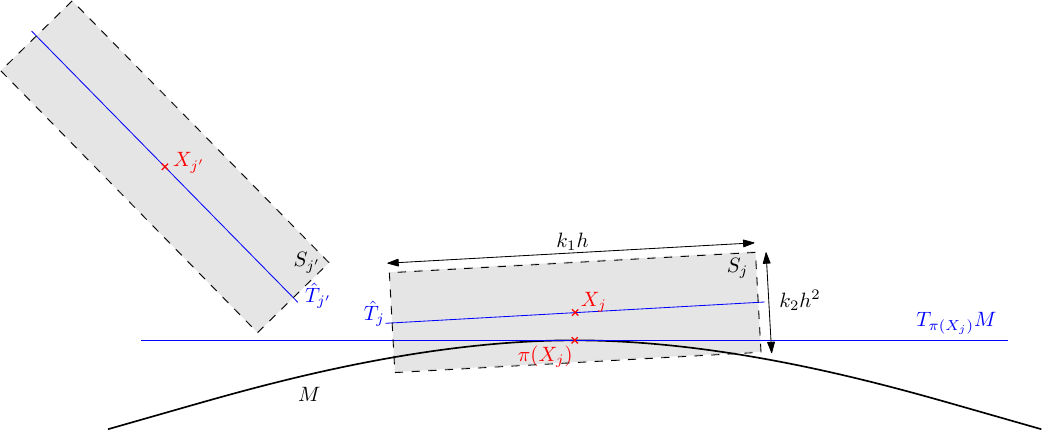}
\caption{The slab $S(X_j,\hat{T}_j,h)$ is centered at $X_j$ and has size $k_1 h$ in the $d$ directions spanned by $\hat{T}_j$, and size $k_2 h^2$ in the $D-d$ directions normal to $\hat{T}_j$.}
\label{slab_illustration}
\end{figure}

Since tangent spaces are unknown, the following result gives some insight on the relation between the accuracy of the tangent space estimation and the decluttering performance that can be reached.

\begin{lem}\label{slabs}
        Let $K>0$ be fixed. There exist constants $k_1(K)$ and $k_2(\rho,K)$ such that for every $h \leq 1$ and $x$ in $\R^D$, $S(x,T,h) \subset \mathcal{B}(x,h/2)$. 
        Moreover, for every $h \leq h_+\wedge 1$ we have 
        \[
        \begin{array}{@{}ccc}        
        h / \sqrt{2} \geq \dd(x,M) \geq h^2 / \rho \quad \mbox{and} \quad \angle \left (T_{\pi_M(x)} M, T \right ) \leq K h / \rho & \Rightarrow & S(x,T,h) \subset S'(x,T_{\pi_M(x)}M,h), 
        \end{array}        
        \]
		where $S'(x,T_{\pi_M(x)}M,h)$ is a larger slab with parameters $k'_1(\rho,K)$ and $k'_2(\rho,K)$,  which are such that $S'(x,T_{\pi_M(x)}M,h) \cap M = \emptyset$.
		 In addition, there exists $k_3(\rho,K)$ such that for all $x$ and $y$ are in $M$,
        \[
        \angle \left (T_x M, T \right ) \leq K h / \rho \quad \mbox{and} \quad \norm{x-y} \leq k_3 h \quad \Rightarrow \quad y \in S(x,T,h).
        \]
         Possible values for $k_1$ and $k_2$ are, respectively, $\frac{1}{ 16(K \vee1)}$ and $\frac{1}{16(\rho \vee K \vee 1)}$, and $k_3$ can be taken as $ k_1 \wedge \frac{\rho k_2}{1 + 2K}$.
        \end{lem}   
        The proof of Lemma  \ref{slabs}, mentioned in \cite{Genovese12}, follows from elementary geometry, combined with the definition of the reach and Proposition \ref{normalpart}.
        
        Roughly, Lemma \ref{slabs} states that the decluttering performance is of order the square of the tangent space precision, hence will be closely related to the performance of the tangent space estimation procedure $\mathtt{TSE}$. Unfortunately, a direct application of $\mathtt{TSE}$ to the corrupted sample $\X_n$ leads to slightly worse precision bounds, in terms of angle deviation. Typically, the angle deviation would be of order $n^{-1/(d+1)}$. However, this precision is enough to remove outliers points which are at distance at least $n^{-2/(d+1)}$ from $M$. Then running our $\mathtt{TSE}$ on this refined sample $\mathtt{SD}_t(\X_n,\mathtt{TSE}(\X_n),n^{-1/(d+1)})$ leads to better angle deviation rates, hence better decluttering performance, and so on.

Let us introduce an iterative decluttering procedure in a more formal way. We choose the initial bandwidth $h_0 = \left (  c_{d,f_{min},f_{max},\rho} \frac{\log n}{\beta (n-1)} \right )^{\gamma_0}$, with $\gamma_0 = 1/(d+1)$, \label{def_h0} and define the first set $\X^{(-1)} = \X_n$ as the whole sample. We then proceed recursively, setting $h_{k+1} = \left (  c_{d,f_{min},f_{max},\rho} \frac{\log n}{\beta (n-1)} \right )^{\gamma_{k+1}}$, with $\gamma_{k+1}$ satisfying $\gamma_{k+1} = (2 \gamma_k +1)/(d+2)$. This recursion formula is driven by the optimization of a trade-off between imprecision terms in tangent space estimation, as may be seen from \eqref{biasvariance}. An elementary calculation shows that
\[
\gamma_k = \frac{1}{d} - \frac{1}{d(d+1)}\left(\frac{2}{d+2}\right )^k.
\]
With this updated bandwidth we define
         \[
         \X^{(k+1)} = \mathtt{SD}_t(\X^{(k)}, \mathtt{TSE}(\X^{(k)},h_{k+1}),h_{k+1}).
         \]
     In other words, at step $k +1$ we use a smaller bandwidth $h_{k+1}$ in the tangent space estimation procedure $\mathtt{TSE}$. Then we use this better estimation of tangent spaces to run the elementary decluttering step $\mathtt{SD}$. The performance of this procedure is guaranteed by the following proposition. With a slight abuse of notation, if $X_j$ is in $\X^{(k)}$, $\mathtt{TSE}(\X^{(k)},h)_j$ will denote the corresponding tangent space of $\mathtt{TSE}(\X^{(k)},h)$.

\begin{prop}\label{noisefiltering}
In the clutter noise model, for $t$, $c_{d,f_{min},f_{max},\rho}$ and $n$ large enough, $k_1$ and $k_2$ small enough, the following properties hold with probability larger than $1-7\left( \frac{1}{n} \right )^{2/d}$ for all $k\geq 0$.
             \medskip
             
             \textbf{Initialization}: 
             \begin{itemize}
             \item For all $X_j$ $\in$ $\X^{(-1)}$ such that $\dd(X_j,M) \leq h_0/\sqrt{2}$, $$\angle \bigl( \mathtt{TSE}(\X^{(-1)},h_0)_j, T_{\pi(X_j)} {M} \bigr) \leq C_{d,f_{min},f_{max}} h_0/\rho.$$
                          \item  For every $X_j$ $\in$ $M \cap \X^{(-1)}$, $X_j$ $\in$ $\X^{(0)}$.
             \item For every $X_j$ $\in$ $\X^{(-1)}$, if $\dd(X_j,M) > h_0^2/\rho$, then $X_j$ $\notin$ $\X^{(0)}$.
            
             \end{itemize}
             
             \medskip
             
             \textbf{Iterations}:
             \begin{itemize}
             \item For all $X_j$ $\in$ $\X^{(k)}$ such that $\dd(X_j,M) \leq h_{k+1}/\sqrt{2}$, $$\angle \bigl( \mathtt{TSE}(\X^{(k)},h_{k+1})_j, T_{\pi(X_j)} {M} \bigr) \leq C_{d,f_{min},f_{max}} h_{k+1}/\rho.$$
                          \item  For every $X_j$ $\in$ $M \cap \X^{(k)}$, $X_j$ $\in$ $\X^{(k+1)}$.
             \item For every $X_j$ $\in$ $\X^{(k)}$, if $\dd(X_j,M) > h_{k+1}^2/\rho$, then $X_j$ $\notin$ $\X^{(k+1)}$.
            
             \end{itemize}
             \end{prop}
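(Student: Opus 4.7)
The plan is induction on $k$, with the initialization as the base case, and with all randomness packaged into one high-probability event constructed at the outset. First I would establish, using the finite VC-dimension of Euclidean balls and slabs together with a single Bernstein/Talagrand-style bound, a ``good event'' $\Omega_\ast$ of probability at least $1-6\left(\frac{1}{n-1}\right)^{2/d}$ on which the empirical counts $|\mathcal{B}(x,h)\cap\X_n|$ and $|S(x,T,h)\cap\X_n|$ are within constant factors of their expectations $(n-1)P(\mathcal{B}(x,h))$ and $(n-1)P(S(x,T,h))$, uniformly over $x\in\R^D$, $T\in G(D,d)$, and $h$ in the (logarithmic-size) family $\{h_k\}_k$. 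On $\Omega_\ast$ all subsequent arguments become deterministic, which sidesteps the dependence between $\X^{(k)}$, the estimated tangent spaces, and the filtering rule.

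For the initialization I would treat the three bullets separately. For the angle bound at a point $X_j$ with $\dd(X_j,M)\leq h_0/\sqrt 2$, I replay the proof of Proposition \ref{tangent_space_rate_nonoise} on $\mathcal{B}(X_j,h_0)$, carefully bookkeeping the outlier contribution to $\hat\Sigma_j(h_0)$: the expected outlier mass in the ball is $O\bigl((1-\beta)h_0^D/K_0^D\bigr)$ while the inlier mass is $\Omega(\beta h_0^d)$, so the outlier perturbation is dominated by the inlier eigenvalue gap $\Omega(h_0^2)$, and a Davis--Kahan argument delivers $\angle(\hat{T}_j,T_{\pi(X_j)}M)\leq C_{d,f_{min},f_{max}} h_0/\rho$. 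The second bullet is a slab count: for $X_j\in M$, Lemma \ref{slabs} gives $S(X_j,\hat{T}_j,h_0)\supset M\cap\mathcal{B}(X_j,k_3 h_0)$, whose inlier sample cardinality is of order $\beta n h_0^d\asymp\log n$, exceeding $t\log(n-1)$ for $t$ small. The third bullet splits $\dd(X_j,M)>h_0^2/\rho$ into $\dd(X_j,M)>h_0/\sqrt 2$ (handled by the first case of Lemma \ref{slabs}, independently of $\hat{T}_j$) and $h_0^2/\rho<\dd(X_j,M)\leq h_0/\sqrt 2$ (handled by the second case of Lemma \ref{slabs} using the angle bound just obtained); in both $S(X_j,\hat{T}_j,h_0)\cap M=\emptyset$, so the slab contains only outliers, of expected number $O\bigl(n h_0^{d+2(D-d)}\bigr)=o(\log n)$.

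The inductive step is structurally identical but is run on the smaller sample $\X^{(k)}$, which by the induction hypothesis lies in the tube of radius $h_k^2/\rho$ around $M$. I repeat the local PCA analysis with this explicit ``noise level'': the empirical covariance $\hat\Sigma_j(h_{k+1})$ computed on $\X^{(k)}$ differs from its noiseless counterpart by cross-terms of operator-norm size $O\bigl((h_k^2/\rho)\,h_{k+1}/\sqrt{n h_{k+1}^d}\bigr)$ (after centering and concentration) and variance terms of size $O\bigl((h_k^2/\rho)^2\bigr)$, while the eigenvalue gap stays of order $h_{k+1}^2$. The recurrence $\gamma_{k+1}=(2\gamma_k+1)/(d+2)$ is precisely calibrated so that, after Davis--Kahan, the total angular perturbation stays balanced with the curvature contribution $h_{k+1}/\rho$. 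The two denoising bullets then follow from the same slab arithmetic as in the base case, with bandwidth $h_{k+1}$ and the freshly estimated tangent spaces.

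The main difficulty is precisely the feedback loop between filtering and tangent estimation: $\X^{(k+1)}$ and $\mathtt{TSE}(\X^{(k)},h_{k+1})$ both depend on $\X_n$ in a complicated data-driven way. Freezing the good event $\Omega_\ast$ at the very beginning resolves this, but one still has to verify algebraically that the recurrence keeps the threshold $t\log(n-1)$ on the correct side of both the inlier and outlier slab counts at every step, and that the PCA eigenvalue gap remains of order $h_{k+1}^2$ in the presence of a mixed inlier/outlier subsample. These checks are bookkeeping-heavy but routine once the uniform concentration is in hand.
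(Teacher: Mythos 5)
Your overall architecture is the same as the paper's: one concentration event fixed at the outset, then an induction in $k$ alternating a local-PCA eigenspace perturbation bound (gap of order $h^{2}$ after normalization) with slab-count thresholding via Lemma \ref{slabs}, the recurrence $\gamma_{k+1}=(2\gamma_k+1)/(d+2)$ being exactly what balances the retained-outlier contamination against the curvature term. However, the step "on $\Omega_\ast$ all subsequent arguments become deterministic" has a genuine gap as you set it up. First, an event on which empirical counts of balls and slabs are \emph{within constant factors} of their expectations, uniformly over all $x$, $T$, $h$, does not exist: for slabs of probability $o(\log n/n)$ (which is precisely the outlier regime you need, $P(S)\asymp h^{2D-d}$) only multiplicative-plus-additive bounds with slack of order $\log(n-1)/(n-1)$ are available, and this additive form is what the paper's Lemma \ref{Concentration} provides and what the threshold comparison actually uses. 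Second, and more importantly, counts alone do not control the localized covariance $\hat\Sigma^{(k)}$, so the angle bounds in your induction are not deterministic consequences of $\Omega_\ast$; the event must also contain uniform Frobenius-norm concentration of the localized first- and second-moment sums of the \emph{inliers} over a continuum of bandwidths $h\in[h_\infty,h_0]$, which the paper obtains from a Talagrand--Bousquet inequality combined with a peeling argument.

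Relatedly, freezing an event does not by itself dispose of the feedback loop, because $\hat\Sigma^{(k)}$ is a sum over the data-dependent subsample $\X^{(k)}$ to which no off-the-shelf concentration applies. The paper's mechanism, which your sketch needs to make explicit, is the decomposition \eqref{decomposition}: the inlier block of $\hat\Sigma^{(k)}$ coincides with the full-sample inlier sum precisely because the induction hypothesis guarantees that no point of $M$ has been removed (so concentration is applied to a data-independent statistic), while the retained-outlier block is bounded \emph{deterministically} by combining the induction hypothesis $\dd(X_i,M)\le h_k^2/\rho$ with Proposition \ref{normalnoise} (normal components at most $10h_k^2/\rho$) and the count bound $N_0(h)\lesssim \log(n-1)$. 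With this accounting the dominant cross-term is of order $\frac{\log(n-1)}{\beta(n-1)h_{k+1}^{d}}\,h_{k+1}\,\frac{h_k^2}{\rho}$ relative to the eigenvalue gap $\asymp h_{k+1}^{2}$, not the $\bigl(h_k^2/\rho\bigr)h_{k+1}/\sqrt{nh_{k+1}^{d}}$ fluctuation you wrote; your stated size would not by itself force the recurrence, even though the recurrence you quote is the correct one. Once the event is enlarged to include the moment concentration and the induction hypotheses are used in this way, the rest of your plan (base case, threshold separation, union of the two far/near outlier cases) matches the paper's proof via Lemma \ref{universalthreshold}.
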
  
This result is threefold. Not only can we distinguish data and outliers within a decreasing sequence of offsets of radii $h_k^2/\rho$ around $M$,           but we can also ensure that no point of $M$ is removed during the process with high probability. Moreover, it also provides a convergence rate for the estimated tangent spaces $\mathtt{TSE}(\X_k,h_{k+1})$.   
                    
Now fix a precision level $\delta$. If $k$ is larger than $(\log(1/\delta) - \log(d(d+1)) /(\log(d+2) - \log(2))$, then $1/d > \gamma_k \geq 1/d - \delta$. Let us define $k_\delta$ as the smallest integer satisfying $\gamma_{k} \geq 1/d -\delta$, and denote by $\Y_n^\delta$ the output of the farthest point sampling algorithm applied to $\X^{(k_\delta)}$ with parameter $\varepsilon = c_{d,f_{min}f_{max}} h_{k_\delta}$, for $c_{d,f_{min}f_{max}}$ large enough. Define also $\hat{T}^\delta$ as the restriction of $\mathtt{TSE}(\X^{(k_\delta)},h_{k_\delta})$ to the elements of $\Y_n^{\delta}$.

According to Proposition \ref{noisefiltering}, the decluttering procedure removes no data point on $M$ with high probability. In other words, $\X^{(k_\delta)} \cap M = \X_n \cap M$, and as a consequence, $\underset{x\in M}{\max} \: \dd(x,\X^{(k_\delta)}) \leq c_{d,f_{min}} \left( \frac{\log n}{\beta n}\right)^{1/d} \ll h_{k_\delta}$ with high probability (see Lemma \ref{epsilon_rate}). As a consequence, we obtain the following.

\begin{cor}\label{recap_noise}
With the above notation, for $n$ large enough, with probability larger than $1- 8 \left( \frac{1} n \right)^{2/d}$,
\begin{multicols}{2}
\begin{itemize}
\item $\underset{X_j \in \Y_n^\delta}{\max} \dd(X_j,M) \leq \frac{\varepsilon^2}{1140 \rho}$,
\item $\underset{X_j \in \Y_n^\delta}{\max} \angle ( T_{\pi_M(X_j)} {M} , \hat{T}^\delta_j ) \leq \frac{\varepsilon}{2280\rho}$, 
\item $\Y_n^\delta$ is $\varepsilon$-sparse,
\item $\underset{x\in M}{\max} \: \dd(x,\Y_n^\delta) \leq 2 \varepsilon$.
\end{itemize}
\end{multicols}
\end{cor}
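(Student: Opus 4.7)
\textbf{Proof plan for Corollary \ref{recap_noise}.} The plan is to combine Proposition \ref{noisefiltering} (applied at iteration $k_\delta$) with the density estimate of Lemma \ref{epsilon_rate} and the elementary properties of the farthest point sampling subroutine recalled in Section \ref{farthest_point_sampling}. A union bound will give the stated probability $1-7\left(\frac{1}{n-1}\right)^{2/d}$: Proposition \ref{noisefiltering} contributes the $6\left(\frac{1}{n-1}\right)^{2/d}$ term, and Lemma \ref{epsilon_rate} contributes the remaining $\left(\frac{1}{n-1}\right)^{2/d}$. Work on the event where both conclusions hold.

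Fix $\varepsilon = c_{d,f_{min},f_{max}}\, h_{k_\delta}$. For item~1, observe that $\Y_n^\delta \subset \X^{(k_\delta)}$ by construction of the farthest point sampling algorithm, and the iteration conclusion of Proposition \ref{noisefiltering} guarantees that every surviving point $X_j \in \X^{(k_\delta)}$ satisfies $\dd(X_j,M)\leq h_{k_\delta}^2/\rho$. Choosing the constant $c_{d,f_{min},f_{max}}$ sufficiently large (or absorbing $1/\rho$ into the constant) yields $h_{k_\delta}^2/\rho \leq \varepsilon^2$ for all $n$ large enough, hence item~1. Item~2 is obtained by reading off the tangent space bound from Proposition \ref{noisefiltering}: for every $X_j\in \Y_n^\delta \subset \X^{(k_\delta)}$ we have $\dd(X_j,M)\leq h_{k_\delta}/\sqrt{2}$, so $\angle(T^\delta_j,T_{\pi(X_j)}M)\leq C_{d,f_{min},f_{max}}\, h_{k_\delta}/\rho$, which is $\leq C_{d,f_{min},f_{max},\rho}\,\varepsilon$ up to redefinition of the constant. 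Item~3 is immediate: the farthest point sampling algorithm with parameter $\varepsilon$ outputs an $\varepsilon$-sparse set by definition.

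For item~4, the key point is to transfer the density of $\X_n\cap M$ in $M$ to $\Y_n^\delta$. Proposition \ref{noisefiltering} asserts that no point of $\X_n\cap M$ is removed during any iteration, so $\X^{(k_\delta)}\cap M = \X_n\cap M$. By Lemma \ref{epsilon_rate} (available since we are on the favorable event), $\X_n\cap M$ is $c_{d,f_{min}}\!\left(\frac{\log n}{\beta n}\right)^{1/d}$-dense in $M$. Since $\gamma_{k_\delta}\leq 1/d$, the scale $\left(\frac{\log n}{\beta n}\right)^{1/d}$ is at most $h_{k_\delta}$, hence at most $\varepsilon$ after absorbing constants; therefore $\X^{(k_\delta)}$ is $\varepsilon$-dense in $M$ for $n$ large enough. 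The farthest point sampling procedure described in Section \ref{farthest_point_sampling} guarantees $\dd_\mathrm{H}(\X^{(k_\delta)},\Y_n^\delta)\leq \varepsilon$, so by the triangle inequality $\max_{x\in M}\dd(x,\Y_n^\delta) \leq \max_{x\in M}\dd(x,\X^{(k_\delta)}) + \varepsilon \leq 2\varepsilon$, which is item~4.

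The main obstacle is bookkeeping of the different scales: one needs to check that the single target scale $\varepsilon = c_{d,f_{min},f_{max}}\, h_{k_\delta}$ is simultaneously (i) large enough that $h_{k_\delta}^2/\rho \leq \varepsilon^2$ (trivial), (ii) large enough that the $M$-density scale $\left(\frac{\log n}{\beta n}\right)^{1/d}$ from Lemma \ref{epsilon_rate} is dominated by $\varepsilon$, which requires $\gamma_{k_\delta}\leq 1/d$, and (iii) small enough to preserve the $\varepsilon^2$-closeness in item~1 after absorbing the $1/\rho$ factor. Once the constant $c_{d,f_{min},f_{max}}$ is fixed large enough to cover these three requirements (and to meet the assumptions of Proposition \ref{noisefiltering} and Lemma \ref{epsilon_rate}), the four items follow on the intersection of the two high-probability events, giving the claim.
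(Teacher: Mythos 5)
Your argument is correct and is essentially the paper's own proof: both combine Proposition \ref{noisefiltering} at iteration $k_\delta$ (denoising and tangent-space bounds), Lemma \ref{epsilon_rate} (density of $\X_n\cap M$, unaffected since no point of $M$ is removed), and the farthest point sampling guarantees, with the same $6+1$ union bound giving $7\left(\frac{1}{n-1}\right)^{2/d}$. Your write-up simply makes explicit the scale bookkeeping that the paper leaves implicit.
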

We are now able to define the estimator $\hat{M}_{\mathtt{TDC \delta}}$.
\begin{Def}
\label{final_estimator_definition}
With the above notation, define
$
\hat{M}_{\mathtt{TDC \delta}} = \mathrm{Del}^{\omega_\ast}\left(\Y^\delta_n,\hat{T}^\delta \right).
$
\end{Def}
Combining Theorem \ref{tdc_stability} and Corollary \ref{recap_noise}, it is clear that $\hat{M}_{\mathtt{TDC \delta}}$ satisfies Theorem \ref{main_result_noob_noise}.

Finally, we turn to the asymptotic estimator $\hat{M}_{\mathtt{TDC +}}$. Set $h_{\infty} = \left (  c_{d,f_{min},f_{max},\rho} \frac{\log n}{\beta (n-1)} \right )^{1/d}$, and let $\hat{k}$ denote the smallest integer such that $\min \{ \dd(X_j,M) | \dd(X_j,M) > h_{\infty}^2/\rho \} > h_{\hat{k}}^2/\rho$. Since $\X_n$ is a (random) finite set, we can always find such a random integer $\hat{k}$ that provides a sufficient number of iterations to obtain the asymptotic decluttering rate. For this random iteration $\hat{k}$, we can state the following result.

\begin{prop}\label{infinite_denoising_estimator}
Under the assumptions of Corollary \ref{recap_noise}, for every $X_j$ $\in$ $X^{(\hat{k} +1)}$, we have
\[
\angle ( \mathtt{TSE}(\X^{(\hat{k}+1)},h_{\infty})_j, T_{\pi(X_j)} {M} ) \leq C_{d,f_{min},f_{max}} h_{\infty}/\rho.
\] 
\end{prop}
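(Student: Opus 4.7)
The plan is to show that after $\hat{k}+1$ iterations of the denoising procedure, the surviving sample $\X^{(\hat{k}+1)}$ is so well-denoised that the tangent space estimation at the target bandwidth $h_\infty$ behaves as in the iteration step of Proposition \ref{noisefiltering}, yielding the claimed angle deviation of order $h_\infty/\rho$. First I would check that $\hat{k}$ is a well-defined finite random integer. Since $\gamma_k$ strictly increases to $1/d$, the bandwidth $h_k$ decreases to $h_\infty$; setting $m := \min\{\dd(X_j,M) : \dd(X_j,M) > h_\infty^2/\rho\}$ (with $m = +\infty$ if the set is empty), the defining inequality $h_k^2/\rho < m$ is eventually satisfied because $h_k^2/\rho \to h_\infty^2/\rho < m$, so $\hat{k}$ exists.

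Second, I would invoke Proposition \ref{noisefiltering} through all iterations $0, \ldots, \hat{k}$. On the event of probability at least $1 - 6((n-1)^{-1})^{2/d}$ that makes it valid, every $X_j \in M \cap \X_n$ survives up to $\X^{(\hat{k}+1)}$, while every $X_j$ with $\dd(X_j,M) > h_{\hat{k}}^2/\rho$ has been discarded. Combined with the defining property of $\hat{k}$ --- which forbids any sample point from lying in the annulus $\{h_\infty^2/\rho < \dd(x,M) \leq h_{\hat{k}}^2/\rho\}$ around $M$ --- it follows that
\[
\underset{X_j \in \X^{(\hat{k}+1)}}{\max} \dd(X_j, M) \leq h_\infty^2/\rho,
\]
namely, $\X^{(\hat{k}+1)}$ has already been denoised down to the final offset level $h_\infty^2/\rho$.

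Third, I would rerun the proof scheme of the iteration step of Proposition \ref{noisefiltering}, but with bandwidth $h_\infty$ in place of $h_{\hat{k}+1}$. For any $X_j \in \X^{(\hat{k}+1)}$, the ball $\mathcal{B}(X_j, h_\infty)$ contains sufficiently many true manifold points for the local empirical covariance to concentrate, by the $(a,d)$-standard property of $v_M$ and the density scale $h_\infty \asymp (\log n / (\beta(n-1)))^{1/d}$. The contribution of the surviving noisy points $X_i$, each at distance at most $h_\infty^2/\rho$ from $M$, perturbs the normal block of the local empirical covariance by an amount of order $h_\infty^2/\rho$, whereas its tangential bulk is of order $h_\infty^2$. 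An eigenspace perturbation inequality --- already used in the proofs of Proposition \ref{tangent_space_rate_nonoise} and Proposition \ref{noisefiltering} --- then yields
\[
\angle \bigl( \mathtt{TSE}(\X^{(\hat{k}+1)}, h_\infty)_j, T_{\pi(X_j)} M \bigr) \leq C_{d,f_{min},f_{max}} \frac{h_\infty}{\rho},
\]
which is the claim.

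The main obstacle lies in executing the iteration step of Proposition \ref{noisefiltering} with the slightly mismatched bandwidth $h_\infty < h_{\hat{k}+1}$. One has to verify that (a) the local density estimates remain valid at the smallest bandwidth $h_\infty$ ever used in the procedure, and (b) the relative normal fluctuation $(h_\infty^2/\rho)/h_\infty = h_\infty/\rho$ is small enough for the eigenspace perturbation argument to conclude. Both conditions are embedded in the very choice of $h_\infty$, and no new ingredient beyond those already developed for Proposition \ref{noisefiltering} is required: step $\hat{k}+1$ is, by construction of $\hat{k}$, formally the ``limit iteration'' of the procedure, where all surviving points already sit at the final noise scale $h_\infty^2/\rho$.
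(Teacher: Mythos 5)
Your proposal is correct and follows essentially the same route as the paper: the definition of $\hat{k}$ together with the iteration guarantees of Proposition \ref{noisefiltering} shows that every surviving point of $\X^{(\hat{k}+1)}$ lies within $h_\infty^2/\rho$ of $M$, and one then reruns the iteration-step local PCA/eigenspace-perturbation computation with bandwidth $h_\infty$, replacing the normal deviation $h_{k}^2/\rho$ by $h_\infty^2/\rho$. This is exactly the paper's (very terse) argument, including the observation that the concentration bounds are uniform over $h\in[h_\infty,h_0]$ and over the iteration index, so the random index $\hat{k}$ causes no difficulty.
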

As before, taking $\Y_n^{+}$ as the result of the farthest point sampling algorithm based on $\X^{(\hat{k}+1)}$, and $T^+$ the vector of tangent spaces  $\mathtt{TSE}(\X^{(\hat{k}+1)},h_{\infty})_j$ such that $\X^{(\hat{k}+1)}_j $ $\in$ $\Y_n^+$, we can construct our last estimator.

\begin{Def}
\label{theoretical_estimator_definition}
With the above notation, define
$
\hat{M}_{\mathtt{TDC +}} = \mathrm{Del}^{\omega_\ast}\left(\Y^+_n,T^{+} \right).
$
\end{Def} 
 In turn, Proposition \ref{infinite_denoising_estimator} implies that $\hat{M}_{\mathtt{TDC +}}$ satisfies Theorem \ref{theoretical_rate}. 

\section{Conclusion}
\label{conclusion}
In this work, we gave results on explicit manifold reconstruction with simplicial complexes. We built estimators $\hat{M}_{\mathtt{TDC}}$, $\hat{M}_{\mathtt{TDC \delta}}$ and $\hat{M}_{\mathtt{TDC+}}$ in two statistical models. We proved minimax rates of convergence for the Hausdorff distance and consistency results for ambient isotopic reconstruction. Since $\hat{M}_{\mathtt{TDC}}$ is minimax optimal in the additive noise model for $\sigma$ small, and uses the Tangential Delaunay Complex of \cite{Boissonnat14}, the latter is proved to be optimal. Moreover, rates of \cite{Genovese12} are proved to be achievable with simplicial complexes that are computable using existing algorithms. To prove the stability of the Tangential Delaunay Complex, a generic interpolation result was derived. In the process, a tangent space estimation procedure and a decluttering method both based on local PCA were studied.

In the model with outliers, the proposed reconstruction method achieves a rate of convergence that can be as close as desired to the minimax rate of convergence, depending on the number of iterations of the decluttering procedure. Though this procedure seems to be well adapted to our reconstuction scheme --- which is based on tangent spaces estimation --- we believe that it could be of interest in the context of other applications. Also, further investigation may be carried out to compare this decluttering procedure to existing ones \cite{Buchet15,Donoho95}. 

As briefly mentioned below Theorem \ref{main_result_noob_nonoise}, our approach is likely to be suboptimal in cases where noise level $\sigma$ is large. In such cases, with additional structure on the  noise such as \textit{centered} and \textit{independent from the source}, other statistical procedures such as deconvolution \cite{Genovese12} could be adapted to provide vertices to the Tangential Delaunay Complex. Tangential properties of deconvolution are still to be studied.

The effective construction of $\hat{M}_{TDC \delta}$ can be performed using existing algorithms. Namely, Tangential Delaunay Complex, farthest point sampling, local PCA and point-to-linear subspace distance computation for slab counting. A crude upper bound on the time complexity of a naive step-by-step implementation is
\[
O\left( nD \left [ 2^{O(d^2)} + \log (1/\delta) D(D+n) \right ] \right),
\]
since the precision $\delta$ requires no more than $\log \left(1/\delta\right)$ iterations of the decluttering procedure.
It is likely that better complexity bounds may be obtained using more refined algorithms, such as fast PCA \cite{Sharma07}, that lessens the dependence on the ambient dimension $D$.
An interesting development would be to investigate a possible precision/complexity tradeoff, as done in \cite{Arias14} for community detection in graphs for instance.

Even though Theorem \ref{tangential_perturbation_thm} is applied to submanifold estimation, we believe it may be applied in various settings. Beyond its statement, the way that it is used is quite general. When intermediate objects (here, tangent spaces) are used in a procedure, this kind of proxy method can provide extensions of existing results to the case where these objects are only approximated. 

As local PCA is performed throughout the paper, the knowledge of the bandwidth $h$ is needed for actual implementation. In practice its choice is a difficult question and adaptive selection of $h$ remains to be considered.

In the process, we derived rates of convergence for tangent space estimation. The optimality of the method will be the object of a future paper.

\paragraph*{acknowledgements}
We would like to thank Jean-Daniel Boissonnat, Fr\'ed\'eric Chazal, Pascal Massart, and Steve Oudot for their insight and the interest they brought to this work.
We are also grateful to the reviewers whose comments helped enhancing substantially this paper.

This work was supported by ANR project TopData ANR-13-BS01-0008 and by the Advanced Grant of the European Research Council GUDHI (Geometric Understanding in Higher
Dimensions). E. Aamari was supported by the Conseil r\'egional d'\^{I}le-de-France under a doctoral allowance of its program R\'eseau de Recherche Doctoral en Math\'ematiques de l'\^{I}le-de-France (RDM-IdF).

\newpage
\bibliography{Biblio_Tangential_Stability_Final}


\appendix

\section{Interpolation Theorem}

This section is devoted to prove the interpolation results of Section \ref{interpolation_theorem_section}. For sake of completeness, let us state a stability result for the reach with respect to $\mathcal{C}^2$-diffeomorphisms.

\begin{lem}[Theorem 4.19 in \cite{federer1959}]
\label{reach_stability}
Let $A \subset \mathbb{R}^D$ with $\mathrm{reach}(A) \geq \rho > 0$ and $ \Phi  : \mathbb{R}^D \longrightarrow \mathbb{R}^D$ is a $\mathcal{C}^1$-diffeomorphism such that $ \Phi $,$ \Phi ^{-1}$, and $\dd  \Phi $ are Lipschitz with Lipschitz constants $K$,$N$ and $R$ respectively, then
	\[ \mathrm{reach}(\Phi(A)) \geq \dfrac{1}{(K\rho^{-1} + R)N^2}. \]
\end{lem}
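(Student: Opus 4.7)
The natural route is through Federer's reach characterization via pairs of points: for any closed set $A \subset \R^D$,
\[
\mathrm{reach}(A) = \inf_{\substack{a,b \in A \\ a \neq b}} \frac{\|b-a\|^2}{2\, \dd(b-a,\mathrm{Tan}(A,a))},
\]
where $\mathrm{Tan}(A,a)$ is the tangent cone. Set $A' = \Phi(A)$ and fix two distinct points $a' = \Phi(a), b' = \Phi(b) \in A'$. The plan is to produce a quadratic upper bound
\[
\dd(b'-a',\mathrm{Tan}(A',a')) \leq \tfrac{1}{2}\bigl(K\rho^{-1} + R\bigr) N^2 \|b'-a'\|^2,
\]
then invoke the characterization above to conclude.

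First I would exploit the diffeomorphism structure to pull the problem back to $A$. Since $\Phi$ is $\mathcal{C}^1$ with $\dd \Phi$ Lipschitz of constant $R$, a first-order Taylor expansion gives
\[
\bigl\|\Phi(b) - \Phi(a) - \dd_a \Phi\,(b-a)\bigr\| \;\leq\; \tfrac{R}{2}\|b-a\|^2,
\]
obtained by writing the remainder as $\int_0^1 (\dd_{a+t(b-a)}\Phi - \dd_a\Phi)(b-a)\,\dd t$ and bounding each integrand via the Lipschitz constant of $\dd\Phi$. Because $\Phi$ is a diffeomorphism, $\mathrm{Tan}(A',a') = \dd_a\Phi\bigl(\mathrm{Tan}(A,a)\bigr)$, so the vector $\dd_a \Phi\,(v)$ lies in $\mathrm{Tan}(A',a')$ for any $v \in \mathrm{Tan}(A,a)$.

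Next I would choose $v \in \mathrm{Tan}(A,a)$ to be an (almost) nearest point to $b-a$ in the tangent cone. By the reach bound on $A$ and Federer's characterization applied in the other direction, we have $\dd(b-a,\mathrm{Tan}(A,a)) \leq \|b-a\|^2/(2\rho)$, so $v$ can be taken with $\|b-a-v\| \leq \|b-a\|^2/(2\rho)$. Combining the two estimates using the triangle inequality and the Lipschitz bound $\|\dd_a\Phi\|_{\mathrm{op}} \leq K$:
\[
\dd(b'-a',\mathrm{Tan}(A',a')) \leq \bigl\|b'-a' - \dd_a\Phi(v)\bigr\| \leq \tfrac{R}{2}\|b-a\|^2 + K\cdot\tfrac{\|b-a\|^2}{2\rho} = \tfrac{1}{2}\bigl(K\rho^{-1}+R\bigr)\|b-a\|^2.
\]

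Finally, I would convert $\|b-a\|$ to $\|b'-a'\|$ using the Lipschitz bound on $\Phi^{-1}$: $\|b-a\| = \|\Phi^{-1}(b') - \Phi^{-1}(a')\| \leq N\|b'-a'\|$, yielding $\|b-a\|^2 \leq N^2\|b'-a'\|^2$. Plugging back and applying the reach characterization to $A'$ gives the claimed lower bound $\mathrm{reach}(A') \geq 1/[(K\rho^{-1}+R)N^2]$. The only potentially delicate point is the use of the tangent cone identity $\mathrm{Tan}(\Phi(A),\Phi(a)) = \dd_a \Phi (\mathrm{Tan}(A,a))$ at the level of generality of closed sets in Federer's framework; for the $\mathcal{C}^2$-submanifolds actually used in the paper this reduces to the classical pushforward of tangent spaces, so no subtlety arises in our application.
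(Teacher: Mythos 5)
The paper does not reproduce a proof of this lemma: it is cited verbatim from Federer (Theorem 4.19 of \cite{federer1959}) and used as a black box. Your reconstruction is correct and, in fact, follows the same route Federer himself takes: you invoke the characterization
\[
\mathrm{reach}(A) = \inf_{\substack{a,b\in A\\ a\neq b}} \frac{\|b-a\|^2}{2\,\dd\bigl(b-a,\mathrm{Tan}(A,a)\bigr)}
\]
(Federer's Theorem 4.18), push forward the tangent cone under the $\mathcal{C}^1$-diffeomorphism via $\mathrm{Tan}(\Phi(A),\Phi(a)) = \dd_a\Phi\bigl(\mathrm{Tan}(A,a)\bigr)$, and then assemble the three Lipschitz constants through a first-order Taylor bound with integral remainder and a pair of triangle inequalities. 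Each step checks out: the Taylor remainder bound $\tfrac{R}{2}\|b-a\|^2$ is exactly what the $R$-Lipschitz hypothesis on $\dd\Phi$ gives; $\|\dd_a\Phi\|_\mathrm{op}\leq K$ follows from $\Phi$ being $K$-Lipschitz; the nearest tangent vector $v$ exists because the tangent cone is closed; and the conversion $\|b-a\|\leq N\|b'-a'\|$ is immediate from the $N$-Lipschitz hypothesis on $\Phi^{-1}$. Your concern about the tangent-cone pushforward for general closed sets is unfounded even beyond the manifold case: if $t_k(x_k-a)\to v$ with $x_k\in A$, $x_k\to a$, then $t_k(\Phi(x_k)-\Phi(a))\to \dd_a\Phi(v)$ because $t_k\,o(\|x_k-a\|)\to 0$ (since $t_k\|x_k-a\|$ is bounded), and the converse follows by applying the same to $\Phi^{-1}$; so the identity holds in Federer's generality with no extra hypothesis. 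This is a clean, self-contained proof of the cited result.
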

Writing $\phi_\ell(\cdot) = \phi(\cdot/\ell)$, we recall that ${\psi_j(a)} = { (R_j-I_D)(a - \pi(p_j)) + (p_j - \pi(p_j)) }$ and
\begin{align}
\Phi(a) &= a + \sum_{j=1}^q \phi_\ell \left(a-\pi(p_j) \right) \psi_j(a).
\label{bump_defi}
\end{align}
Let us denote $b_1 = \sup_x \norm{\dd_x \phi}$, $b_2 = \sup_x \norm{\dd^2_x \phi}_\mathrm{op}$, and write $C_1 = 1+b_1$, $C_2 = b_2+2b_1$. Straightforward computation yields $C_1 \leq 7/2$ and $C_2 \leq 28$.

\begin{proof}[\proofof Lemma \ref{bump_lem}]
	First notice that the sum appearing in (\ref{bump_defi}) consists of at most one term. Indeed, since $\phi \equiv 0$ outside $\mathcal{B}(0,1)$, if $\phi_\ell \left(a-\pi(p_j) \right) \neq 0$ for some $j \in \{1,\ldots,q\}$,  then $\norm{a - \pi(p_j)} \leq \ell$. Consequently, for all $i \neq j$,
	\begin{align*}
	\norm{a - \pi(p_i)} & \geq \norm{p_j - p_i} - \norm{p_j - \pi(p_j)} - \norm{\pi(p_j) - a} - \norm{\pi(p_i) - p_i}   \\
		& \geq \delta - \eta - \ell - \eta  \\
		& \geq \delta - 2\ell \geq \ell,
	\end{align*}
	where we used that $ 6 \eta \leq \ell \leq \delta / 3$. Therefore, $\phi_\ell \left(a-\pi(p_i) \right) = 0$ for all $i\neq j$. In other words, if a $p_j$ actually appears in $\Phi(a)$ then the others do not.
	
	\textit{Global diffeomorphism:} As the sum in (\ref{bump_defi}) is at most composed of one term, chain rule yields
		\begin{align*}
			\norm{\dd_a \Phi - {{I_D}}}_\mathrm{op} 
			&= \underset{ 1 \leq j \leq q }{\max} \norm{\dd_a \left[ \phi_\ell \left(a-\pi(p_j) \right) \psi_j(a) \right]}_\mathrm{op} \\
			& = \underset{ 1 \leq j \leq q }{\max} \norm{\psi_j(a) \left. \frac{\dd_b \phi}{\ell}\right|_{b = \frac{a-\pi(p_j)}{\ell}} + \phi_\ell \left(a-\pi(p_j) \right)(R_j-I_D)}_\mathrm{op} \\
			& \leq \bigl(b_1+1\bigr)\theta + b_1\frac{\eta}{\ell} < 1,
		\end{align*}	
where the last line follows from $b_1 \leq 5/2$, $6 \eta \leq \ell$ and $\theta \leq \pi/64$.
Therefore, $\dd_a  \Phi $ is invertible for all $a\in \R^D$, and $\left(\dd_a  \Phi \right)^{-1} = \sum_{i=0}^\infty \left(I_D- \dd_a  \Phi \right)^i$. $ \Phi $ is a local diffeomorphism  according to the local inverse function theorem. Moreover, $\norm{\Phi(a)} \rightarrow \infty$ as $\norm{a} \rightarrow \infty$, so that $ \Phi $ is a global $\mathcal{C^\infty}$-diffeomorphism by Hadamard-Cacciopoli theorem \cite{DeMarco1994}. 
		
	\textit{Differentials estimates:}	
	\textit{(i) First order:} From the estimates above,	
		\begin{align*}
			\norm{\dd_a  \Phi }_\mathrm{op}  \leq \norm{I_D}_{op} + \norm{\dd_a  \Phi  - I_D}_{op} \leq 1+ \bigl(b_1+1\bigr)\theta + b_1\frac{\eta}{\ell}.
		\end{align*}
		
	\textit{(ii) Inverse:}  Write for all $a \in \R^D$,
	\begin{align*}
			\norm{\dd_{\Phi(a)}  \Phi ^{-1}}_\mathrm{op} &= \norm{(\dd_a  \Phi )^{-1}}_\mathrm{op}
							 =\norm{ \sum_{i=0}^\infty \left(I_D- \dd_a  \Phi \right)^i}_\mathrm{op} \\ 
							 &\leq \frac{1}{1- \norm{{{I_D}} - \dd_a  \Phi }_\mathrm{op} } \leq \frac{1}{1- \bigl(b_1+1\bigr)\theta - b_1\frac{\eta}{\ell} },
	\end{align*}
	
	where the first inequality holds since $\norm{\dd_a \Phi - {{I_D}}}_\mathrm{op} < 1$, and $\norm{\cdot}_\mathrm{op}$ is sub-multiplicative.
		
		\textit{(iii) Second order:} Again, since the sum (\ref{bump_defi}) includes at most one term,
		\begin{align*}
			\norm{\dd^2_a \Phi}_\mathrm{op} 
			&= \underset{ 1 \leq j \leq q }{\max} \norm{\dd^2_a \left[ \phi_\ell \left(a-\pi(p_j) \right) \psi_j(a) \right]}_\mathrm{op} \\
			& \leq \underset{ 1 \leq j \leq q }{\max}  \left\{
				\frac{\norm{\dd^2 \phi}_\mathrm{op}}{\ell^2} \norm{\psi_j(a)} + 2 \frac{\norm{\dd \phi}_\mathrm{op}}{\ell} \norm{R_j-I_D}_\mathrm{op} \right\}
				 \\
			& \leq  b_2 \frac{\eta}{\ell^2} + \left(b_2 + 2b_1 \right) \frac{\theta}{\ell}.
		\end{align*}
\end{proof}

\begin{proof}[\proofof Theorem \ref{tangential_perturbation_thm}]
Set $\ell = \delta/3$ and $M' =  \Phi (M)$.

\begin{itemize}
\item \textit{Interpolation:} For all $j$,  $p_j =  \Phi (\pi(p_j)) \in M'$ by construction since $\phi_\ell(0)=1$.

\item \textit{Tangent spaces:} Since $\left.\dd_x \phi_l\right|_{x=0} = 0$, for all $j \in \bigl\{1,\ldots,q \bigr\}$, $\left.\dd_a  \Phi \right|_{a=\pi(p_j)} = R_j$. Thus, 
	\begin{align*}
	T_{p_j} {M}' &= T_{ \Phi (\pi(p_j))}  \Phi (M)  \\
		&= \left.\dd_a  \Phi \right|_{a=\pi(p_j)} \left(T_{\pi(p_j)} M \right)\\
		&=  R_j \left(T_{\pi(p_j)} M \right) = T_j,
	\end{align*}
by definition of $R_j$.

\item \textit{Proximity to $M$:} The bound on $d_H(M,M') = d_H\bigl(M, \Phi(M) \bigr)$ follows from the correspondence
 \begin{align*}
\norm{\Phi(a) - a} &\leq \underset{a \in \R^D}{\sup} \underset{1\leq j\leq q}{\max} \phi_\ell \left(a-\pi(p_j) \right) \norm{\psi_j(a)} \\
				&\leq  \ell \theta + \eta \leq \delta \theta + \eta.
\end{align*}

\item \textit{Isotopy:} Consider the continuous family of maps
	\[  \Phi _{(t)}(a) = a + t \left( \sum_{j = 1}^q \phi_\ell \left(a-\pi(p_j) \right) {\psi_j(a)} \right),
	\]
	for $0 \leq t \leq 1$. Since $ \Phi _{(t)} - {{I_D}} = t \bigl( \Phi  - {{I_D}}\bigr)$, the arguments above show that $ \Phi _{(t)}$ is a global diffeomorphism of $\R^D$ for all $t \in [0,1]$. Moreover $ \Phi _{(0)} = {{I_D}}$, and $ \Phi _{(1)} =  \Phi $. Thus, $M =  \Phi _{(0)}(M)$ and $M' =  \Phi _{(1)}(M)$ are ambient isotopic.
	
\item \textit{Reach lower bound:} The differentials estimates of order $1$ and $2$ of $ \Phi $ translate into estimates on Lipschitz constants of $ \Phi $,$ \Phi ^{-1}$ and $\dd  \Phi $. Applying  Lemma \ref{reach_stability} leads to
\begin{align*}
	\mathrm{reach}\left(M'\right) 
		& \geq \dfrac{\left( 1-C_1 \left(\frac{\eta}{\ell} + \theta \right) \right)^2}{\dfrac{1+ C_1 \left(\frac{\eta}{\ell} + \theta \right)}{\rho}+ C_2 \left(\frac{\eta}{\ell^2} +\frac{\theta}{\ell}\right)} 
		 = \rho \cdot  \dfrac{\left( 1-C_1 \left(\frac{\eta}{\ell} + \theta \right) \right)^2}{1+ C_1 \left(\frac{\eta}{\ell} + \theta \right)+ C_2 \left(\frac{\eta}{\ell^2} +\frac{\theta}{\ell}\right) \rho}.
\end{align*}
Now, replace $\ell$ by its value $\delta/3$, and write $c_1 = 3 C_1 \leq 21/2 \leq 11$ and $c_2 = 3^2 C_2 \leq 252$.
We derive 
\begin{align*}
\mathrm{reach}\left(M'\right) 
&\geq
\left(1-2c_1 \left(\frac{\eta}{\delta} + \theta \right) \right) 
\left( 
	1
	- c_1 \left(\frac{\eta}{\delta} + \theta \right)
	- c_2 \left(\frac{\eta}{\delta^2} + \frac{\theta}{\delta} \right) \rho
\right)
\rho
\\
&\geq
\left( 
	1
	- 3 c_1 \left(\frac{\eta}{\delta} + \theta \right)
	- c_2 \left(\frac{\eta}{\delta^2} + \frac{\theta}{\delta} \right) \rho
\right)
\rho
\\
&\geq
\left( 
	1
	- \left(3 c_1 +c_2\right) \left(\frac{\eta}{\delta^2} + \frac{\theta}{\delta} \right) \rho
\right)
\rho
,
\end{align*}
where for the last line we used that $\delta / \rho \leq 1$. The desired lower bound follows taking $c_0 = 3c_1+ c_2 \leq 285$.
\end{itemize}

\end{proof}

\section{Some Geometric Properties under Reach Regularity Condition}
 
\subsection{Reach and Projection on the Submanifold}
In this section we state intermediate results that connect the reach condition to orthogonal projections onto the tangent spaces. They are based on the following fundamental result.

\begin{prop}[Theorem 4.18 in \cite{federer1959}]\label{normalpart}
For all $x$ and $y$ in $M$, 
\[
\|(y-x)_{\perp}\| \leq \frac{\|y-x\|^2}{2 \rho},
\]
where $(y-x)_{\perp}$ denotes the projection of $y-x$ onto $T_x M^{\perp}$.
\end{prop}

        From Proposition \ref{normalpart} we may deduce the following property about trace of Euclidean balls on $M$.
      
\begin{prop}\label{ballprojection}
     Let $x \in \mathbb{R}^D$  be such that $\dd(x,M) = \Delta \leq h \leq \frac{\rho}{8}$, and let $y$ denote $\pi(x)$. Then,
     \begin{align*}
     \mathcal{B}\left (y, r_h^- \right )\cap M \subset \mathcal{B}(x,h) \cap M \subset \mathcal{B} \left ( y, r_h^+ \right ) \cap M,
     \end{align*}
     where $r_h^2 + \Delta^2 = h^2$, $(r_h^-)^2 = \left (1-\frac{ \Delta}{\rho}\right )r_h^2$, and $(r_h^+)^2 = \left (1+\frac{ 2 \Delta}{\rho}\right )r_h^2$.
\end{prop}

\begin{proof}[\proofof Proposition \ref{ballprojection}]
Let $z$ be in $M \cap \mathcal{B}(x,h)$, and denote by $\delta$ the quantity $\|z-y\|$. We may write
\begin{align}\label{eqballproj}
       \|z-x\|^2 = \delta^2 + \Delta^2 +2 \left\langle z-y,y-x \right\rangle,
\end{align}
hence $\delta^2 \leq h^2 - \Delta^2 - 2 \left\langle z-y,y-x \right\rangle$. Denote, for $u$ in $\mathbb{R}^D$, by $u_\perp$ its projection onto $T_y M^{\perp}$. Since $\left\langle z-y,y-x \right\rangle = \left\langle (z-y)_{\perp},y-x \right\rangle$, Proposition \ref{normalpart} ensures that
\[
\delta^2\left ( 1 - \frac{\Delta}{\rho} \right ) \leq r_h^2.
\] 
 Since $\Delta \leq h \leq \rho/8$, it comes $\delta^2 \leq (1+2 \frac{\Delta}{\rho})r_h^2$. On the other hand, \eqref{eqballproj} and Proposition \ref{normalpart} also yield
 \[
 \|z-x\|^2 \leq \delta^2 \left ( 1 + \frac{\Delta}{\rho} \right ) + \Delta^2.
 \]
 Hence, if $\delta^2 \leq \left (1- \frac{\Delta}{\rho} \right )r_h^2$, we have
 \[
 \|z-x\|^2 \leq r_h^2 + \Delta^2 = h^2.
 \]

\end{proof}

Also, the following consequence of Proposition \ref{normalpart} will be of particular use in the decluttering procedure.

\begin{prop}\label{normalnoise}
Let $h$ and $h_k$ be bandwidths satisfying $h_k^2/\rho \leq h \leq h_k$. Let $x$ be such that $d(x,M) \leq h/\sqrt{2}$ and $\pi_{M} (x) = 0$, and let $z$ be such that $\|z-x\| \leq h$ and $d(z,M) \leq h_k^2/\rho $. Then
\[
\| z_{\perp} \| \leq \frac{6 h_k^2}{\rho},
\]
where $z_{\perp}$ denotes the projection of $z$ onto $T_0 M^{\perp}$.
\end{prop}
\begin{proof}[\proofof Proposition \ref{normalnoise}]
         Let $y$ denote $\pi_M (z)$. A triangle inequality yields $\|y\| \leq \| y-z \| + \|z-x\| + \|x\| \leq h_k^{2}/\rho + (1 + 1/\sqrt{2}) h \leq 3 h_k$.         
Proposition \ref{normalpart} ensures that $\| y_{\perp} \| \leq  \| y \|^2/(2\rho) \leq (9 h_k^2)/(2\rho)$. Since $\| z_{\perp} \| \leq \|y_{\perp}\| + h_k^2/\rho$, we have $\| z_{\perp} \| \leq 6 h_k^2/\rho$ .      
\end{proof}

At last, let us prove Lemma \ref{slabs}, that gives properties of intersections of ambient slabs with $M$.

\begin{proof}{(\proofof Lemma \ref{slabs})}
Set $k_1=\frac{1}{16(K \vee 1)}$, $k_2 = \frac{1}{16(K \vee \rho \vee 1)}$, and $k_3 = k_1 \wedge \frac{\rho k_2}{1+2K} \wedge 1$. For all $h> 0$, and $z \in S(x,T,h)$, triangle inequality yields $\|z-x\| \leq \|\pi_T(z-x)\| + \|\pi_{T^{\perp}}(z-x)\| \leq (k_1 + k_2h)h$. Since $h \leq 1$ and $k_1 + k_2 \leq 1/2$, we get $z \in \mathcal{B}(x,h/2)$.
  
   Now, suppose that $ h/\sqrt{2} \geq \dd(x,M) \geq h^2 / \rho$ and $\angle \left (T_{\pi(x)} M, T \right ) \leq K h / \rho$. For short we write $T_0 = T_{\pi(x)} M$. Let $z \in S(x,T,h)$, since $h \leq 1$,  it comes 
   \begin{align*}
   \| \pi_{T_0} (z-x) \| \leq \| z-x \| \leq (k_1 + k_2)h = k'_1 h,
   \end{align*}
   with $k'_1 = k_1 + k_2$. On the other hand
   \begin{align*}
   \| \pi_{T_0^{\perp}} (z-x) \| & \leq \| \pi_{T_0^{\perp}} \pi_{T} (z-x) \| + \| \pi_{T_0^{\perp}} \pi_{T^{\perp}} (z-x) \| \leq (Kh/\rho)(k_1 h) + k_2 h^2 = k'_2 h^2,
   \end{align*}
   with $k'_2 = k_1 K/\rho + k_2$. Hence $S(x,T,h) \subset S'(x,T_0,h)$, for the constants $k'_1$ and $k'_2$ defined above. It remains to prove that $S'(x,T_0,h) \cap M = \emptyset$. To see this, let $z \in S'(x,T_0,h)$, and $y = \pi(x)$. Since $k'_1 + k'_2 \leq 1/4$, we have $\| y-z \| \leq \| y-x \| + \| x-z \| \leq h(1/\sqrt{2} + 1/4)$.
   For the normal part, we may write 
   \begin{align*}
   \| \pi_{T_0^{\perp}}(z-y) \| \geq \| \pi_{T_0^{\perp}}(y-x)\| - \| \pi_{T_0^{\perp}}(x-z) \| \geq h^2(1/\rho - k'_2).
   \end{align*}
   Since $k'_2 \leq 1/(8\rho)$, we have $\| \pi_{T_0^{\perp}}(z-y) \| > \|y-z\|^2 /(2\rho)$, hence Proposition \ref{normalpart} ensures that $z \notin M$.
   
   At last, suppose that $x \in M$ and $y \in \mathcal{B}(x,k_3h)\cap M$. Since $k_3 \leq k_1$, we have $\| \pi_{T}(y-x)\| \leq k_1h$. Next, we may write
   \begin{align*}
   \| \pi_{T^{\perp}}(y-x) \| \leq \| \pi_{T^{\perp}} \pi_{T_0}(y-x) \| + \| \pi_{T^{\perp}}\pi_{T_0^{\perp}}(y-x) \|.
\end{align*}    
Since $y \in M$, Proposition \ref{normalpart} entails $\|\pi_{T_0^{\perp}}(y-x) \| \leq \|y-x\|^2/(2\rho) \leq k_3^2 h^2/(2\rho)$. It comes 
\[
\| \pi_{T^{\perp}}(y-x) \| \leq \frac{h^2}{\rho} \left ( k_3 K + \frac{k_3^2}{2} \right ) \leq k_2 h^2.
\]
Hence $y \in S(x,T,h)$.
\end{proof}

\subsection{Reach and Exponential Map}

In this section we state results that connect Euclidean and geodesic quantities under reach regularity condition. We start with a result linking reach and principal curvatures.

\begin{prop}[Proposition 6.1 in \cite{Niyogi08}]
For all $x\in M$, writing $II_x$ for the second fundamental form of ${M}$ at $x$, for all unitary $w\in T_x {M}$, we have $\norm{II_x(w,w)} \leq 1/\rho$.
\end{prop}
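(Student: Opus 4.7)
The plan is to reduce the bound on the second fundamental form to the elementary tangent-plane inequality that follows from the reach assumption.

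First I would pass from $II_x$ to a geodesic acceleration. Fix $x \in M$ and a unit tangent vector $w \in T_x M$. Let $\gamma : (-\epsilon,\epsilon) \to M$ be the unit-speed geodesic with $\gamma(0)=x$ and $\gamma'(0)=w$. Because $\gamma$ is a geodesic, the Euclidean acceleration $\gamma''(0)$ is purely normal to $M$ and equals $II_x(w,w)$. So the target inequality becomes $\norm{\gamma''(0)} \leq 1/\rho$.

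Second, I would invoke the ``ball condition'' associated with the reach. The standard consequence of $\mathrm{reach}(M) \geq \rho$ is that for every $x \in M$ and every unit vector $n$ normal to $M$ at $x$, the open ball $\mathcal{B}(x+\rho n,\rho)$ does not meet $M$. Expanding $\norm{y-(x+\rho n)}^2 \geq \rho^2$ for $y \in M$ gives the tangent-plane bound
\[
\left|\left\langle y-x,\,n\right\rangle\right| \;\leq\; \frac{\norm{y-x}^2}{2\rho}
\qquad (\forall y\in M),
\]
the two signs of $n$ being permitted since the normal direction can be chosen either way.

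Third, I would specialize to $y = \gamma(t)$ for small $t$ and take a Taylor expansion. Since $\gamma$ is unit-speed with normal initial acceleration, $\gamma(t)-x = tw + \tfrac{t^2}{2}\gamma''(0) + o(t^2)$ and $\norm{\gamma(t)-x}^2 = t^2 + o(t^2)$, so for any unit normal $n$,
\[
\frac{t^2}{2}\bigl|\langle \gamma''(0),n\rangle\bigr| + o(t^2) \;\leq\; \frac{t^2}{2\rho} + o(t^2).
\]
Dividing by $t^2$ and letting $t\to 0$ yields $|\langle \gamma''(0),n\rangle|\leq 1/\rho$. Taking the supremum over unit normals $n$ (or simply choosing $n = II_x(w,w)/\norm{II_x(w,w)}$ when nonzero) gives $\norm{II_x(w,w)} = \norm{\gamma''(0)} \leq 1/\rho$, completing the proof.

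The only substantive step is the ball condition used in the second point; everything else is a Taylor expansion. I would expect no real obstacle here, as the ball condition is a direct restatement of $\mathrm{reach}(M)\geq \rho$ via the medial axis, and could be cited from Federer~\cite{federer1959} if a self-contained derivation is not desired.
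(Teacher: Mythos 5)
Your proof is correct. Note that the paper offers no internal argument for this statement at all: it is quoted directly from \cite{Dey09}, so there is nothing to compare against line by line, and your self-contained derivation is in fact the standard proof of that cited result. The two ingredients you use are both sound: for a unit-speed geodesic $\gamma$ of $M$ the ambient acceleration $\gamma''(0)$ is purely normal and equals $II_x(w,w)$, and the reach hypothesis gives the two-point inequality $\left|\langle y-x,n\rangle\right| \leq \norm{y-x}^2/(2\rho)$ for every $y \in M$ and unit normal $n$ at $x$. The only step that deserves a precise reference is the ball condition: the disjointness of the open ball $\mathcal{B}(x+\rho n,\rho)$ from $M$ follows from Federer's Theorem 4.8(12) (namely $\pi_M(x+tn)=x$ for $0\le t<\rho$), or you can bypass the balls altogether and invoke Federer's Theorem 4.18 in \cite{federer1959}, which is verbatim your tangent-plane inequality. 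The Taylor expansion step is fine since $\langle w,n\rangle=0$ removes the first-order term, and the degenerate case $II_x(w,w)=0$ is trivial, so the argument is complete as written.
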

For all $x \in {M}$ and $v\in T_x {M}$, let us denote by $\exp_x(v)$ the exponential map at $x$ of direction $v$. According to the following proposition, this exponential map turns out to be a diffeomorphism on balls of radius at most $\pi \rho$.

\begin{prop}[Corollary 1.4 in \cite{Alexander06}]\label{injectivityradius}
The injectivity radius of $M$ is at least $\pi \rho$.
\end{prop}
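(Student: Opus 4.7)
The plan is to derive the injectivity radius estimate as a classical consequence of the curvature control that the reach condition provides, in two stages: first a conjugate radius bound via Rauch comparison, and then an argument excluding short geodesic loops via the reach.

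First I would translate the reach bound into a pointwise sectional curvature bound. The proposition of \cite{Dey09} recalled just above gives $\norm{II_x(w,w)} \leq 1/\rho$ for every unit tangent vector $w$, hence $\norm{II_x}_{\mathrm{op}} \leq 1/\rho$. Feeding this into the Gauss equation for a submanifold of Euclidean space (which has vanishing ambient curvature) yields $|K_M| \leq 1/\rho^2$ for every sectional curvature of $M$. With this upper bound in hand, the Rauch comparison theorem for Jacobi fields along a unit-speed geodesic $\gamma$ in $M$ shows that no Jacobi field vanishing at $\gamma(0)$ can vanish again before arclength $\pi/\sqrt{1/\rho^2}=\pi\rho$. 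Equivalently, for every $x\in M$ the differential $\dd_v \exp_x$ is nonsingular for every $v\in T_xM$ with $\norm{v}<\pi\rho$, so the conjugate radius of $M$ is at least $\pi\rho$.

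The remaining step, and what I expect to be the main obstacle, is to upgrade this to full injectivity of $\exp_x$ on the open ball of radius $\pi\rho$, i.e.\ to rule out pairs of distinct unit-speed geodesics starting at $x$ and meeting again before arclength $\pi\rho$. By the Klingenberg-type dichotomy, the only way this can happen before the conjugate radius is through a geodesic loop at $x$; so it suffices to show that no nontrivial geodesic loop of $M$ based at $x$ has length strictly less than $2\pi\rho$. Here I would exploit the reach assumption in the ambient Euclidean space rather than an intrinsic argument: a short geodesic loop $\gamma$ stays, by the curvature bound, in a small ambient ball around $x$, and one can bound the Euclidean distance between $x$ and the ``antipodal'' point $\gamma(L/2)$ using a second-variation estimate controlled by $1/\rho^2$. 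If $L<2\pi\rho$, this forces the two ends of $\gamma$ to be strictly closer (in the ambient sense) than what is compatible with $\gamma$ being a non-self-intersecting loop on a set of reach $\geq \rho$, because the $\rho$-tubular neighborhood on which $\pi_M$ is a well-defined single-valued projection would then contain two distinct points of $M$ projecting to the same center of curvature, contradicting $\mathrm{reach}(M)\geq \rho$.

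Combining the two steps, for every $x\in M$ and every $r<\pi\rho$ the map $\exp_x$ is an immersion (by the conjugate radius bound) and injective (by the loop-exclusion), hence a diffeomorphism from the open ball of radius $r$ in $T_xM$ onto its image. Letting $r\uparrow \pi\rho$ gives the claimed lower bound $\mathrm{inj}(M)\geq \pi\rho$. As the statement is a verbatim citation of Corollary 1.4 in \cite{Alexander06}, the cleanest route in the final writeup would likely be to invoke that result directly rather than reproduce the Klingenberg-style argument, but the scheme above shows how it follows entirely from the reach-induced sectional curvature bound together with the ambient single-valuedness of $\pi_M$ on the $\rho$-tube.
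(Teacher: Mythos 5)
The paper does not prove this proposition: it is stated as a verbatim citation of Corollary~1.4 in~\cite{Alexander06} and is used as a black box, so there is no ``paper's own proof'' to compare against. Your proposal is therefore an attempt to reconstruct a proof from scratch, and I will assess it on those terms.

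Two issues. First, a small but worth-flagging point on the curvature bound: from $\norm{II_x(w,w)} \leq 1/\rho$ for unit $w$ and the Gauss equation, the two-sided sectional curvature bound for a Euclidean submanifold is $|K_M| \leq 2/\rho^2$, not $1/\rho^2$ --- the paper itself records exactly this factor of $2$ in the proof of Proposition~\ref{volumeform}. What \emph{does} hold with constant $1$ is the one-sided upper bound $K_M \leq 1/\rho^2$, coming from $K_M = \langle II(u,u), II(v,v)\rangle - \norm{II(u,v)}^2 \leq \langle II(u,u), II(v,v)\rangle$. Since Rauch/conjugate-point comparison only needs the upper bound, your conjugate radius estimate $\geq \pi\rho$ survives, but the claim $|K_M| \leq 1/\rho^2$ as written is false and should be replaced by $K_M \leq 1/\rho^2$.

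Second, and more seriously, the Klingenberg-type loop-exclusion step --- which you correctly identify as ``the main obstacle'' --- is not actually carried out. What you offer is that a short geodesic loop would place ``two distinct points of $M$ projecting to the same center of curvature,'' contradicting $\mathrm{reach}(M) \geq \rho$. This is not a well-formed contradiction: the reach condition says the nearest-point projection onto $M$ is single-valued on the open $\rho$-tube around $M$, i.e.\ no point \emph{off} $M$ at distance $<\rho$ has two nearest points on $M$; it does not directly say anything about ``centers of curvature'' of a geodesic loop, and it is not enough to merely note that $p$ and $\gamma(L/2)$ are ambient-close. To run a genuine argument one would have to exhibit, from the existence of a geodesic loop of length $L<2\pi\rho$, an explicit ambient point within distance $<\rho$ of $M$ with two nearest neighbors on $M$; your sketch does not produce such a point, and in fact the result is tight (the circle of radius $\rho$ achieves $\mathrm{inj}=\pi\rho$), so any slack in the estimate must be tracked carefully. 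This is precisely the content that \cite{Alexander06} supplies by a different route (via convexity properties of the signed distance function and CAT-space methods), and it is not recoverable by the hand-waving given here. You are right that the cleanest course --- and the one the paper takes --- is to cite \cite{Alexander06} directly; a self-contained proof along Klingenberg's lines is possible but would require a real argument at the loop step, not the one sketched.
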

 Denoting by $\dd_M(\cdot,\cdot)$  the geodesic distance on $M$, we are in position to connect geodesic and Euclidean distance.
In what follows, we fix the constant $\alpha = 1 + \frac{1}{4 \sqrt{2}}$.

\begin{prop}\label{geodesics}
For all $x,y \in {M}$ such that $\norm{x-y} \leq \rho/4$,
\[ \norm{x-y} \leq \dd_{M}(x,y) \leq \alpha \norm{x-y}.\]
Moreover, writing $y = \exp_x(r v)$ for $v\in T_x {M}$ with $\norm{v}=1$ {\color{black}{and $r \leq \rho/4$}},
\[ y = x +rv + R(r,v)\]
with $\norm{R(r,v)} \leq \frac{r^2}{2\rho}$.
\end{prop}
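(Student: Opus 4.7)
The plan is to prove the three displayed inequalities in order, using the exponential map together with the bound on the second fundamental form stated just above in the excerpt. The trivial first inequality $\norm{x-y} \leq \dd_M(x,y)$ holds because every rectifiable curve from $x$ to $y$ in $M$ has length at least the Euclidean chord.

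For the Taylor expansion, I would set $\gamma(t) = \exp_x(tv)$ on $[0,r]$, a unit-speed geodesic with $\gamma(0)=x$ and $\gamma'(0)=v$. Its Euclidean acceleration satisfies $\gamma''(t) = II_{\gamma(t)}(\gamma'(t),\gamma'(t))$, and by the Proposition from \cite{Dey09} just cited, $\norm{\gamma''(t)} \leq 1/\rho$ for all $t$. The integral form of Taylor's theorem,
\begin{equation*}
\gamma(r) = \gamma(0) + r\gamma'(0) + \int_0^r (r-s)\gamma''(s)\,\dd s,
\end{equation*}
yields $y = x + rv + R(r,v)$ with $\norm{R(r,v)} \leq \int_0^r (r-s)/\rho \,\dd s = r^2/(2\rho)$.

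The heart of the argument is the upper bound $\dd_M(x,y) \leq \alpha \norm{x-y}$. First I would establish that the minimizing geodesic from $x$ to $y$ exists uniquely, using Proposition \ref{injectivityradius}: for this I exhibit a short path by projecting the Euclidean segment $[x,y]$ onto $M$. Since $\norm{x-y} \leq \rho/4 < \rho$, the segment lies in the $\rho/4$-tube where $\pi_M$ is well-defined and (by standard reach theory) $\rho/(\rho - \rho/4) = 4/3$-Lipschitz. The image is a curve on $M$ joining $x$ to $y$ of length at most $(4/3)\norm{x-y} \leq \rho/3 < \pi\rho$, so $r := \dd_M(x,y) \leq \rho/3$ and $y = \exp_x(rv)$ for some unit $v \in T_x M$. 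Applying the expansion, $\norm{x-y} \geq \norm{rv} - \norm{R(r,v)} \geq r(1 - r/(2\rho))$, i.e.\ $r \leq \norm{x-y}/(1 - r/(2\rho))$. Substituting the target bound $r \leq \alpha\norm{x-y}$ together with $\norm{x-y} \leq \rho/4$ into this inequality reduces the self-consistency to $\alpha^2 \leq 8(\alpha-1)$, i.e.\ $\alpha^2 - 8\alpha + 8 \leq 0$; this holds for $\alpha = 1 + 1/(4\sqrt{2})$ since the roots of the quadratic are $4 \pm 2\sqrt{2}$ and a direct numerical check gives $4 - 2\sqrt{2} < 1 + 1/(4\sqrt{2}) < 4 + 2\sqrt{2}$. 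A short bootstrap starting from the preliminary bound $r \leq (4/3)\norm{x-y}$ and iterating this algebraic step converges to $r \leq \alpha \norm{x-y}$.

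The last inequality is then a plug-in: from the Taylor bound $\norm{x-y} \geq r - r^2/(2\rho)$, one gets $r \leq \norm{x-y} + r^2/(2\rho)$, and inserting $r \leq \alpha\norm{x-y}$ into the quadratic term yields $\dd_M(x,y) \leq \norm{x-y} + \alpha^2 \norm{x-y}^2/(2\rho)$. The main obstacle is the upper bound: the Taylor expansion alone gives only an \emph{implicit} relation between $r$ and $\norm{x-y}$, so the preliminary projection argument (to stay inside the injectivity radius) and the algebraic fixed-point/bootstrap step (to extract the particular constant $\alpha$) are what tie everything together.
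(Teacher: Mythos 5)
Your proof is correct, but it takes a genuinely different route for the key upper bound $\dd_M(x,y)\leq\alpha\norm{x-y}$: the paper simply quotes Proposition 6.3 of \cite{Niyogi08} for that statement, and the only work in its proof is the second-order Taylor estimate on $R(r,v)$ via $\norm{II}\leq 1/\rho$ (identical to yours) followed by the plug-in step. You instead prove the chord-versus-geodesic comparison from scratch: you project the Euclidean segment onto $M$, use Federer's bound that $\pi_M$ is $\rho/(\rho-t)$-Lipschitz on the $t$-offset to get the crude estimate $\dd_M(x,y)\leq\tfrac{4}{3}\norm{x-y}\leq\rho/3<\pi\rho$ --- which also legitimizes writing $y=\exp_x(rv)$ with $r=\dd_M(x,y)$, although you should add one sentence invoking completeness (here compactness) of $M$ for the existence of a minimizing geodesic, since the injectivity radius alone only gives uniqueness --- and then bootstrap through $\norm{x-y}\geq r\bigl(1-r/(2\rho)\bigr)$. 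Your bootstrap is sound and not circular, because it starts from an established bound: the iteration $c\mapsto 8/(8-c)$ launched at $c_0=4/3$ is decreasing and converges to the fixed point $4-2\sqrt{2}\leq\alpha$; in fact two steps already give $r\leq\tfrac{20}{17}\norm{x-y}\leq\alpha\norm{x-y}$, so the limiting argument can be replaced by a finite check. What your approach buys is self-containedness (no reliance on \cite{Niyogi08}) and even a marginally sharper constant $4-2\sqrt{2}$ in place of $\alpha$; the cost is the extra reach machinery (Lipschitzness of the projection on a tube), which the paper avoids by citation. The remaining ingredients --- chord length bounded by arc length, the Taylor expansion of the geodesic, and the final combination yielding $\dd_M(x,y)\leq\norm{x-y}+\alpha^2\norm{x-y}^2/(2\rho)$ --- coincide with the paper's argument.
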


\begin{proof}[\proofof Proposition \ref{geodesics}]
The first statement is a direct consequence of Proposition 6.3 in \cite{Niyogi08}. 
  Let us define $u(t) =\exp_x(tv) - \exp_x(0) -tv$ and $w(t) = \exp_x(tv)$ for all $0\leq t \leq r$. It is clear that $u(0) = 0$ and $u'(0) = 0$. 
Moreover, $\norm{u''(t)} = \norm{II_{w(t)}\left(w'(t),w'(t)\right)} \leq 1/\rho$. Therefore, a Taylor expansion at order two gives $\norm{R(r,v)} = u(r) \leq  r^2/(2\rho)$. Applying the first statement of the proposition gives $r\leq \alpha \norm{x-y}$. 
\end{proof}
      The next proposition gives bounds on the volume form expressed in polar coordinates in a neighborhood of points of ${M}$.
      
      \begin{prop}\label{volumeform}
      Let $x \in {M}$ be fixed. Denote by $J(r,v)$ the Jacobian of the volume form expressed in polar coordinates around $x$, for $r \leq \frac{\rho}{4}$ and $v$ a unit vector in $T_x {M}$. In other words, if $y = \exp_x(rv)$, $d_yV =J(r,v) dr dv.$
      Then  \[c_d r^{d-1} \leq J(r,v) \leq C_d r^{d-1},\] where $c_d = 2^{-d}$ and $C_d=2^d$. As a consequence, if $\mathcal{B}_M(x,r)$ denotes the geodesic ball of radius $r$ centered at $x$, then, if $r \leq \frac{\rho}{4}$,
      \[
      c'_d r^d \leq Vol(\mathcal{B}_M(x,r)) \leq C'_d r^d,
      \]
      with $c'_d = c_d V_d$ and $C'_d = C_d V_d$, where $V_d$ denotes the volume of the unit $d$-dimensional Euclidean ball.
      \end{prop}
      
\begin{proof}[\proofof Proposition \ref{volumeform}]
Denoting $A_{r,v} = \dd_{rv} \exp_x$, the Area Formula \cite[Section 3.2.5]{Federer69} asserts that $J(r,v) = r^{d-1} \sqrt{\det \left( A_{r,v}^t A_{r,v} \right)}$. Note that from Proposition 6.1 in \cite{Niyogi08} together with the Gauss equation \cite[p. 130]{DoCarmo92}, the sectional curvatures in ${M}$ are bounded by $|\kappa| \leq 2/\rho^2$. 
Therefore, the Rauch theorem \cite[Lemma 5]{Dyer15} states that
\[ \left( 1 - \frac{r^2}{3\rho^2} \right) \norm{w} \leq \norm{A_{r,v} w} \leq \left(1+ \frac{r^2}{\rho^2} \right)\norm{w},\]
for all $w \in T_x {M}$. As a consequence, 
\[ 2^{-d} \leq \left( 1 - \frac{r^2}{3\rho^2} \right)^d \leq \sqrt{\det \left( A_{r,v}^t A_{r,v} \right)} \leq \left(1+ \frac{r^2}{\rho^2} \right)^d \leq 2^d. 
\]
Since $Vol(\mathcal{B}_M(x,r))= \int_{s=0}^{r} \int_{v \in {\mathcal{S}}_{d-1}} J(s,v) ds dv$, where ${\mathcal{S}}_{d-1}$ denotes the unit $d-1$-dimensional sphere, the bounds on the volume easily follows.
\end{proof}

\section{Some Technical Properties of the Statistical Model}\label{abstandard}
\subsection{Covering and Mass}
\begin{lem}
\label{epsilon_rate}
Let $Q_0 \in \mathcal{U}_M(f_{min},f_{max})$. Then for all $p \in {M}$ and $ r \leq \rho/4$,
\[ Q_0\bigl(\mathcal{B}(p,r)\bigr) \geq a_{d}f_{min} r^d,\]
where $a_d > 0$.
As a consequence, for $n$ large enough and for all $Q \in \mathcal{G}_{D,d,f_{min},f_{max},\rho,\sigma}$, with probability larger that $1-\left(\frac{1}{n}\right)^{2/d}$,
\[\dd_{H}(M,\X_n) \leq C_{d,f_{min}} \left( \frac{\log n}{n} \right)^{1/d}+ \sigma.\]
Similarly, for $n$ large enough and for all $P \in \mathcal{O}_{D,d,f_{min},f_{max},\rho,\beta}$, with probability larger that $1-\left(\frac{1}{n}\right)^{2/d}$,
\[\dd_{H}(M,\X_n\cap M) \leq C_{d,f_{min}} \left( \frac{\log n}{\beta n} \right)^{1/d}.\]
\end{lem}
\begin{proof}[\proofof Lemma \ref{epsilon_rate}]
The first statement is a direct corollary of Proposition \ref{volumeform}, since for all $r \leq \rho/4$,
\[
Q_0\bigl(\mathcal{B}(p,r)\bigr) 
= 
\int_{\mathcal{B}(p,r)} f d \mathcal{H}^d 
\geq 
f_{min} Vol\left(\mathcal{B}(p,r)\cap M\right)
\geq 
a_d f_{min} r^d
,
\]
where $a_d$ can be taken to be equal to $c'_d$ of Proposition \ref{volumeform}.
Let us now prove the second statement. 
By definition, sample $X_i \in \X_n$, that has distribution $Q \in \mathcal{G}_{D,d,f_{min},f_{max},\rho,\sigma}$ can be written as $X_i = Y_i + Z_i$, with $Y_i$ having distribution $Q_0 \in \mathcal{G}_{D,d,f_{min},f_{max},\rho}$, and $\norm{Z_i} \leq \sigma$.
From the previous point, letting $a = a_d f_{min}$, $Q_0$ fulfils the so-called $(a,d)$-standard assumption of \cite{Chazal2013} for $r\leq \rho/4$. 
Looking carefully at the proof of Lemma 10 in \cite{Chazal2013} shows that its conclusion still holds for measures satisfying the $(a,d)$-standard assumption for small radii only.
Therefore, writing $\Y_n = \left\{Y_1,\ldots,Y_n\right\}$, for $r \leq \rho/8$ we obtain
\[
\p_{Q_0}\left( \dd_{H}({M},\mathbb{Y}_n) > r \right)
\leq \frac{4^d}{a r^d} \exp \left(  -n \frac{a}{2^d} r^d \right).
\]
The statement then follows using that $d_H(\X_n,\Y_n) \leq \sigma$, and setting $r = C_{d,f_{min}} \left( \frac{\log n}{n}\right)^{1/d}$ with $C_{d,f_{min}}^d \frac{a}{2^{d+1}} \geq 1+ 2/d$.

To prove the last point, notice that for all $k=0,\ldots,n$, conditionally on the event $\left\lbrace|\mathbb{X}_n \cap {M}|=k\right\rbrace$, $\mathbb{X}_n \cap {M}$ has the distribution of a $k$-sample of $Q_0$. Therefore,

\begin{align*}
\p_P\left( \dd_{H}({M},\mathbb{X}_n \cap {M}) > r \right| \left. |\mathbb{X}_n \cap {M}|=k  \right)
&=
\p_{Q_0}\left( \dd_{H}({M},\mathbb{X}_k \cap {M}) > r \right)
\\
&\leq
\frac{4^d}{a r^d} \exp \left(  -k \frac{a}{2^d} r^d \right).
\end{align*}
Hence,
\begin{align*}
\p_P\left( \dd_{H}({M},\X_n \cap {M}) > r \right )
&= 
\sum_{k = 0}^{n} 
\p_P\left( \dd_{H}({M},\mathbb{X}_n \cap {M}) > r \right| \left. |\mathbb{X}_n \cap {M}|=k  \right)
\p_P(\left|\mathbb{X}_n \cap {M}\right|=k  ) \\
&\leq
\sum_{k = 0}^{n}
\frac{4^d}{a r^d} \exp \left(  -k \frac{a}{2^d} r^d \right)
\binom{n}{k} \beta^k(1-\beta)^{n-k}\\
&=
\frac{4^d}{a r^d} \left[ 1 - \beta \left( 1 - \exp\left({-\frac{a}{2^d} r^d}\right)  \right) \right]^n \\
&\leq
\frac{4^d}{a r^d} \exp \left[ - n \beta \left( 1 - \exp\left({-\frac{a}{2^d} r^d}\right) \right) \right] \\
&\leq
\frac{4^d}{a r^d} \exp \left[ - n \beta \frac{a}{2^{d+1}}r^d \right],
\end{align*}
whenever $r\leq \rho/8$ and $a r^d \leq 2^d$. Taking $r = C_{d,f_{min}}' \left( \frac{\log n}{\beta n}\right)^{1/d}$ with $C_{d,f_{min}}'^d \frac{\beta a}{2^{d+1}} \geq 1+ 2/d$ yields the result.
\end{proof}
We now focus on proving Lemma \ref{model_properties}.
For its proof, we need the following piece of notation. For all bounded subset $K \subset \R^D$ and $\varepsilon>0$, we let $\mathrm{cv}_K(\varepsilon)$ denote the Euclidean covering number of $K$.
That is, $\mathrm{cv}_K(\varepsilon)$ is the minimal number $k$ of Euclidean open balls of radii $\varepsilon$ and centered at elements of $K$ that are needed to cover $K$.

\begin{lem}\label{covering_number_vs_diameter}
Let $K \subset \R^D$ be a bounded subset. If $K$ is path connected, then for all $\varepsilon>0$, $\mathrm{diam}(K) \leq 2\varepsilon \mathrm{cv}_K(\varepsilon)$.
\end{lem}

\begin{proof}[\proofof Lemma \ref{covering_number_vs_diameter}]
Let $p,q \in K$ and $\gamma : [0,1] \rightarrow K$ be a continuous path joining $\gamma(0) = p$ and $\gamma(1) = q$.
Writing $N = \mathrm{cv}_K(\varepsilon)$, let $x_1,\ldots,x_N \in \R^D$ be the centers of a covering of $K$ by open balls of radii $\varepsilon$.
We let $U_i$ denote $\left\{ t , \norm{\gamma(t) - x_{i}} < \varepsilon \right\} \subset [0,1]$.
By construction of the covering, there exists $x_{(1)} \in \left\{x_1,\ldots,x_N\right\}$ such that $\norm{p - x_{(1)}} < \varepsilon$. 
Then $U_{(1)} \ni \gamma(0) = p$ is a non-empty open subset of $[0,1]$, so that $t_{(1)} = \sup U_{(1)}$ is positive.
If $t_{(1)} =1$, then $\norm{q-x_{(1)}} \leq \varepsilon$, and in particular $\norm{q-p} \leq 2 \varepsilon$.
If $t_{(1)} < 1$, since $U_{(1)}$ is an open subset of $[0,1]$, we see that $\gamma(t_{(1)}) \notin U_{(1)}$. But $\cup_{i = 1}^N U_i$ is an open cover of $[0,1]$, which yields the existence $U_{(2)}$ such that $\gamma(t_{(1)}) \in U_{(2)}$, and for all $t < t_{(1)}$, $\gamma(t) \notin U_{(2)}$. Then consider $t_{(2)} = \sup U_{(2)}$, and so on.
Doing so, we build by induction a sequence of numbers $0 < t_{(1)} < \ldots < t_{(k)} \leq 1$ and distinct centers $x_{(1)},\ldots,x_{(k)} \in \left\{x_1,\ldots,x_N\right\}$  ($k \leq N$) such that
$\norm{p - x_{(1)}} < \varepsilon$, $\norm{q - x_{(k)}} \leq \varepsilon$, 
with 
$\norm{\gamma(t_{(i)}) - x_{(i)}} \leq \varepsilon$ for $1 \leq i \leq k$ and $\norm{\gamma(t_{(i)}) - x_{(i+1)}} < \varepsilon$ for $1 \leq i \leq k-1$.
In particular, $\norm{x_{(i)} - x_{(i+1)}} \leq 2\varepsilon$ for all $1\leq i \leq k-1$.
To conclude, write
\begin{align*}
\norm{p-q}
&\leq 
\norm{p-x_{(1)}}
+
\norm{x_{(1)} - x_{(k)}}
+
\norm{q - x_{(k)}}
\\
&\leq
\varepsilon
+
\sum_{i=1}^{k-1} \norm{x_{(i)} - x_{(i+1)}}
+
\varepsilon
\\
&\leq
2k\varepsilon
\leq
2\varepsilon \mathrm{cv}_K(\varepsilon).
\end{align*}
Since this bound holds for all $p,q \in K$, we get the announced bound on the diameter of $K$.
\end{proof}
We are now in position to prove Lemma \ref{model_properties}.
\begin{proof}[\proofof Lemma \ref{model_properties}]
Let $\varepsilon \leq {\rho}/{4}$, and $x_1,\ldots,x_{\mathrm{cv}_M(\varepsilon)}$ be a minimal covering of $M$.
According to Lemma \ref{epsilon_rate}, for all $k$,
\[Q\left(\mathcal{B}_M(x_k,\varepsilon)\right) \geq a_d f_{min} \varepsilon^d\]
for some $a_d > 0$. A straightforward packing argument \cite[Section B.1]{Chazal2013} yields that the covering number of the support $M$ of $Q$ satisfies
\[
\mathrm{cv}_M(\varepsilon) \leq \frac{c_d}{f_{min} \varepsilon^{d}}
\] 
for all $\varepsilon \leq {\rho}/{4}$, where $c_d = 2^d/a_d$.
Applying this bound with $\varepsilon = \rho/4$, together with Lemma \ref{covering_number_vs_diameter} yields
\begin{align*}
\mathrm{diam}(M)
&\leq
2 \frac{\rho}{4} \mathrm{cv}_M\left( \frac{\rho}{4} \right)
\\
&\leq
\frac{\rho}{2} \frac{c_d}{f_{min} \left(\frac{\rho}{4}\right)^d}
\\
&=
\frac{C_d}{f_{min} \rho^{d-1}},
\end{align*}
where $C_d = 2^{3d-1}/a_d$.
\end{proof}

Now we allow for some outliers. We consider a random variable $X$ with distribution $P$, that can be written as $X = V(Y+Z) + (1-V)X''$, with $\|Z\| \leq s h$, $s \leq 1/4$, such that $\mathbb{P}(V=1) = \beta$ and $V$ is independent from $(Y,Z,X'')$, $Y$ has law $Q$ in $\mathcal{G}_{D,d,f_{min},f_{max},\rho}$, and $X''$ has uniform distribution on $\mathcal{B}(0,K_0)$ (recall that $K_0$ is defined below Lemma \ref{model_properties}). Note that $s=0$ corresponds to the clutter noise case, whereas $\beta=1$ corresponds to the additive noise case.

  For a fixed point $x$, let $p\left(x,h\right)$ denote $P\left(\mathcal{B}\left(x,h\right)\right)$. We have $\mathbb{P}\left(VY \in \mathcal{B}\left(x,\left(1-s\right)h\right) \right) \leq \mathbb{P}\left(VX \in \mathcal{B}\left(x,h\right)\right)\leq \mathbb{P}\left(VY \in \mathcal{B}\left(x,2h\right)\right)$. Hence we may write 
\[
 \beta q\left(x,3/4h\right) + \left(1-\beta\right)q'\left(x,h\right) \leq p\left(x,h\right) \leq  \beta q\left(x,2h\right) + \left(1-\beta\right)q'\left(x,h\right),
 \]
  where $q\left(x,h\right) = Q\left(\mathcal{B}\left(x,h\right)\right)$, and {\color{black}{$q'\left(x,h\right)$ = $\left(h/{K_0}\right)^D$}}. Bounds on the quantities above are to be found in the following lemma.
     \begin{lem}\label{enoughweight}
     There exists $h_+\left(\rho,\beta,f_{min},f_{max},d\right) \leq \rho / \sqrt{12d}$ such that, if $h \leq h_+$, for every $x$ such that $\dd\left(x,M\right) \leq h$, we have 
     \begin{itemize}
     \item $\mathcal{B}\left(x,2h\right) \cap M \subset \mathcal{B}\left(\pi_{{M}}\left(x\right),4h\right) \cap M,$
     \item $ q\left(x,2h\right) \leq C_{d}{ f_{max}} h^{d}$.
     \end{itemize}
     Moreover, if $\dd\left(x,M\right) \leq h/\sqrt{2}$, we have
     \begin{itemize}
     \item $\mathcal{B}\left(\pi_{{M}}\left(x\right),h/8\right) \cap M \subset \mathcal{B}\left(x,3h/4\right)$,
     \item $c_{d}{ f_{min}} h^d \leq q\left(x,3h/4\right)$,
     \item $ p\left(x,h\right) \leq 2 \beta q\left(x,2h\right)$.
     \end{itemize}
     \end{lem}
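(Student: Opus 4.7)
The plan is to decompose the statement into two layers: the purely Euclidean ball inclusions, and the measure bounds that follow from them combined with Propositions \ref{geodesics}, \ref{volumeform}, and \ref{ballprojection}.

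First I would establish the two inclusions. The inclusion $\mathcal{B}(x,h) \cap M \subset \mathcal{B}(\pi_M(x), 2h) \cap M$ is immediate from the triangle inequality, since under $\dd(x,M) \leq h$ any $y \in \mathcal{B}(x,h) \cap M$ satisfies $\|y-\pi_M(x)\| \leq \|y-x\| + \|x-\pi_M(x)\| \leq 2h$. The reverse inclusion $\mathcal{B}(\pi_M(x), h/(2\sqrt 2)) \cap M \subset \mathcal{B}(x,h)$ under the sharper assumption $\dd(x,M) \leq h/\sqrt 2$ is more delicate: a naive triangle inequality only gives radius $3h/(2\sqrt 2) > h$. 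Instead I would apply Proposition \ref{ballprojection} with $\Delta = \dd(x,M) \leq h/\sqrt 2$: the Pythagorean radius $r_h = \sqrt{h^2 - \Delta^2} \geq h/\sqrt 2$, and the corrected inner radius $r_h^- = (1-\alpha^2 \Delta/\rho)r_h$ dominates $h/(2\sqrt 2)$ provided $h_+$ is small enough in terms of $\alpha^2/\rho$. This is the only nonobvious step of the lemma.

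Next I would pass to the measure bounds. Since $Q$ has density bounded between $f_{min}$ and $f_{max}$ with respect to $v_M$, the inclusions give $f_{min}\, v_M(\mathcal{B}(\pi_M(x),h/(2\sqrt 2)) \cap M) \leq q(x,h) \leq f_{max}\, v_M(\mathcal{B}(\pi_M(x),2h) \cap M)$. For $h_+$ chosen small enough that $2\alpha h \leq \rho/4$, Proposition \ref{geodesics} sandwiches each Euclidean ball between geodesic balls of comparable radii, and Proposition \ref{volumeform} expresses the induced volumes in polar coordinates with Jacobian pinched between $c_d r^{d-1}$ and $C_d r^{d-1}$. Integrating yields $v_M(\mathcal{B}_M(\pi_M(x),r)) \asymp r^d$ with explicit $d$-dependent constants, hence the claimed $c_d f_{min} h^d \leq q(x,h) \leq C_d f_{max} h^d$.

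The final inequality $p(x,h) \leq 2\beta q(x,h)$ reduces to $(1-\beta)(h/K_0)^D \leq \beta q(x,h)$. Using the lower bound just established, this becomes $h^{D-d} \leq \beta c_d f_{min} K_0^D/(1-\beta)$, which (since $D \geq d+1$ in the clutter model) amounts to another upper bound on $h_+$ depending on the model parameters. Collecting all the constraints --- $h_+ \leq \rho/8$ from Proposition \ref{ballprojection}, $h_+ \leq \rho/(8\alpha)$ from Propositions \ref{geodesics} and \ref{volumeform}, and the outlier-domination bound above --- one can take $h_+ \leq \rho/\sqrt{3d}$, the $\sqrt d$ factor arising from the dimensional control needed to keep the exponential chart around each $\pi_M(x)$ comfortably within the injectivity radius. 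The main obstacle is the sharp inclusion under $\dd(x,M) \leq h/\sqrt 2$ forcing the use of Proposition \ref{ballprojection}; everything else is standard bookkeeping.
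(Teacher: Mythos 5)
Your proposal is correct and follows essentially the same route as the paper: the inclusions come from the triangle inequality and Proposition \ref{ballprojection} (with $r_h^- \geq r_h/2 \geq h/(2\sqrt{2})$ when $\Delta \leq h/\sqrt{2}$), the bounds on $q(x,h)$ from Propositions \ref{geodesics} and \ref{volumeform}, and $p \leq 2\beta q$ from dominating $(1-\beta)(h/K_0)^D$ by the lower bound $\beta c_d f_{min} h^d$ for $h_+$ small enough. The only cosmetic difference is that the paper also routes the first inclusion through Proposition \ref{ballprojection} ($r_h^+ \leq 2r_h$), and the cap $h_+ \leq \rho/\sqrt{3d}$ is simply imposed by taking a minimum (it is used in later lemmas) rather than arising from injectivity-radius considerations, but neither point affects correctness.
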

     \begin{proof}[\proofof Lemma \ref{enoughweight}]
     Set $h_1\left(\rho\right) = \rho/\left(16 \alpha\right)$, and let $x$ be such that $\dd\left(x,{M}\right) \leq h$, and $h \leq h_1$.  According to Proposition \ref{ballprojection},   $\mathcal{B}\left(x,2h\right) \cap {M} \subset \mathcal{B}\left(\pi_{{M}}\left(x\right), r_{2h}^+\right) \cap M$, with $r_{2h}^+ = \sqrt{\left(1 + 2 \Delta / \rho\right)} r_{2h} \leq 2 r_{2h} \leq 4h$. According to Proposition \ref{geodesics}, if $y$ $\in$ $\mathcal{B}\left(\pi_M\left(x\right),4h\right) \cap M$, then $d_M\left(\pi_M\left(x\right),y\right) \leq 4\alpha h \leq \rho/4$.   Proposition \ref{volumeform} then yields $q\left(x,2h\right) \leq C_d f_{max}h^d$.
     
      Now if $\dd\left(x,M\right) \leq h/\sqrt{2}$, $\mathcal{B}\left(\pi_{{M}}\left(x\right),r_{3h/4}^-\right) \cap M \subset \mathcal{B}\left(x,3h/4\right) \cap M$ from Proposition \ref{ballprojection}, with $r_{3h/4}^- = \sqrt{\left(1 - \Delta / \rho\right)}r_{3h/4} \geq r_{3h/4}/2 \geq h/8$. Since $\mathcal{B}_M\left(\pi_M\left(x\right),h/8\right) \subset \mathcal{B}\left(\pi_M\left(x\right),h/8\right)\cap M$, a direct application of Proposition \ref{volumeform} entails $c_{d}{ f_{min}} h^d \leq q\left(x,3h/4\right)$. 
      
      Applying Proposition \ref{volumeform} again, there exists $h_2\left(f_{min},d,D,\beta,\rho\right)$ such that if $h \leq h_1 \wedge h_2$, then for any $x$ such that $\dd\left(x,{M}\right) \leq h/\sqrt{2}$ we have $\left(1-\beta\right) q'\left(x,h\right) \leq \beta c_{d,f_{min}} h^{d} $, along with $q\left(x,2h\right) \geq q\left(x,3h/4\right) \geq c_{d,f_{min}} h^{d}$. We deduce that $ p\left(x,h\right) \leq 2 \beta q\left(x,2h\right)$. Taking $h_+ = h_1 \wedge h_2\wedge \rho / \sqrt{12d}$ leads to the result.
     \end{proof}

\subsection{Local Covariance Matrices}
     In this section we describe the shape of the local covariance matrices involved in tangent space estimation. Without loss of generality, the analysis will be conducted for $\hat{\Sigma}_1$ (at sample point $X_1$), abbreviated as $\hat{\Sigma}$. We further assume that $d(X_1,M) \leq h/\sqrt{2}$, $\pi_{{M}}(X_1) = 0$, and that $T_{0}{M}$ is spanned by the $d$ first vectors of the canonical basis of $\mathbb{R}^D$.
     
The two models (additive noise and clutter noise) will be treated jointly, by considering a random variable $X$ of the form
\[ 
X = V(Y+Z) + (1-V)X'',
\]
where $\mathbb{P}(V=1) = \beta$ and $V$ is independent from $(Y,Z,X'')$, $Y$ has distribution in $\mathcal{G}_{D,d,f_{min},f_{max},\rho,\sigma}$, $\|Z\| \leq \sigma$, and $X''$ has uniform law on $\mathcal{B}(0,K_0)$ (recall that $K_0$ is defined above Definition \ref{noise_model_definition}). For short we denote by $s$ the quantity $\sigma/h$, and recall that we take $s \leq 1/4$, along with $h \leq h_+$ (defined in Lemma \ref{enoughweight}).

Let $U(X_i,h)$, $i=2, \hdots, n$, denote $ \mathbbm{1}_{\mathcal{B}\left (X_1,h \right )}(X_i)$, let $Y_i \in M$ and $Z_i$ such that $X_i = Y_i + Z_i$, with $\|Z_i\| \leq s h$, and let $V_2, \hdots V_n$ denote random variables such that $V_i= 1$ if $X_i$ is drawn from the signal distribution (see page \pageref{labels}). It is immediate that the $(U(X_i,h),V_i)$'s are independent and identically distributed, with distribution $(U(X,h),V)$.

     With a slight abuse of notation, we will denote by $\mathbb{P}$ and $\mathbb{E}$ conditional probability and expectation with respect to $X_1$. The following expectations will be of particular interest.        
     \[
     \begin{array}{@{}ccc}
     m(h) &=& \mathbb{E}(XU(X,h)V)/\mathbb{E}(VU(X,h)), \\
     \Sigma(h) &=& \mathbb{E} (X-m(h))_{\top}(X-m(h))_{\top}^{t}U(X,h)V,
     \end{array}
     \]
     where for any $x$ in $\mathbb{R}^D$ $x_{\top}$ and $x_{\perp}$ denote respectively the projection of $x$ onto $T_0 M$ and $T_0M^{\perp}$.
     
    The following lemma gives useful results on both $m(h)$ and $\Sigma(h)$, provided that $X_1$ is close enough to $M$. 
    
       \begin{lem}\label{expectations}
       If $d(X_1,M) \leq h/\sqrt{2}$, for $h \leq h_+$, then
	\[
	\Sigma(h) = \left ( \begin{array}{@{}c|c}
      A(h) & 0 \\
      \hline
      0 & 0
      \end{array}
      \right ),
	\]
	with 
	\[
	\mu_{min}(A(h)) \geq \beta {c_{d,f_{min},f_{max} }}h^{d+2}.
	\]
	Furthermore, 
	\begin{align*}
	\|m_{\top}(h)\| &\leq 2h, \\
	\|m_{\perp}(h)\| &\leq \frac{2h^2}{\rho} + s h.
	\end{align*}
	\end{lem}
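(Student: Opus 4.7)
Writing $V$ for the indicator that $X$ is drawn from the signal component $Q$, the expectation defining $\Sigma(h)$ factorizes as
\[
\Sigma(h) = \beta\,\mathbb{E}_{Q}\bigl[(X-m(h))_{\top}(X-m(h))_{\top}^{t}\,U(X,h)\bigr],
\]
an integral supported on $\mathcal{B}(X_1,h)\cap M$. The block form is then immediate from the definition: the projection $(\cdot)_\top$ kills every normal component, so $\Sigma(h)$ vanishes off the $d\times d$ block aligned with $T_0M$. Thus only the tangential block $A(h)$ is nontrivial, and the block decomposition requires no work beyond reading the definition.

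\textbf{Bounds on $m(h)$.} Because $\pi_M(X_1)=0$, the vector $X_1$ lies in $(T_0M)^{\perp}$, so $X_{1,\top}=0$. For any $X\in\mathcal{B}(X_1,h)\cap M$, the orthogonal decomposition $X-X_1=X_\top+(X_\perp-X_1)$ combined with $\|X-X_1\|\le h$ and Pythagoras yields $\|X_\top\|\le h$. For the normal part, Proposition~\ref{geodesics} applied in a chart at $0\in M$ gives the Taylor expansion $X=rv+R(r,v)$ with $rv\in T_0M$ and $\|R(r,v)\|\le r^2/(2\rho)$; since $X_\perp=R(r,v)_\perp$, one obtains $\|X_\perp\|\le\alpha^{2}\|X\|^{2}/(2\rho)$, of order $h^2/\rho$. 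Both bounds are preserved by averaging under the probability measure $U(X,h)V\,\dd P/\mathbb{E}[UV]$, yielding the claimed estimates on $\|m(h)_\top\|$ and $\|m(h)_\perp\|$ up to absorbable constants.

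\textbf{Eigenvalue lower bound (main difficulty).} Fix any unit $u\in T_0M$; the task is to show $u^{t}A(h)u\ge \beta c_{d,f_{min},f_{max}}\,h^{d+2}$ independently of $u$. Lemma~\ref{enoughweight} together with $\dd(X_1,M)\le h/\sqrt{2}$ yields the inclusion $\mathcal{B}(0,h/(2\sqrt 2))\cap M\subset\mathcal{B}(X_1,h)\cap M$, and the density lower bound $f\ge f_{min}$ lets us pass to an integral against $v_M$. Parametrize this smaller ball by $X=\exp_{0}(rv)$ with $v\in S^{d-1}\subset T_0M$: Proposition~\ref{geodesics} gives $u^{t}X_\top=r\langle u,v\rangle+O(r^2/\rho)$ and Proposition~\ref{volumeform} lower-bounds the Jacobian by $c_d r^{d-1}$. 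The two antipodal spherical caps $S_{\pm}=\{v\in S^{d-1}:\pm\langle u,v\rangle\ge 1/2\}$, intersected with the annulus $r\in[h/(8\sqrt 2),h/(4\sqrt 2)]$, define disjoint regions of $v_M$-mass at least $c'_{d}\,h^{d}$ on which $u^{t}X_\top$ is respectively bounded below by $+c_{d}h$ and above by $-c_{d}h$ (the second-order correction $O(h^{2}/\rho)$ being negligible for small $h$). For any real centering $c=u^{t}m(h)$, the quantity $(u^{t}X_\top-c)^{2}$ is then at least $(c_{d}h)^2/4$ on one of the two caps, which supplies the required $\Omega(h^{d+2})$ lower bound uniformly in $u$.

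The main obstacle is exactly this last step: one must exhibit $h$-scale spread of the tangential projection in \emph{every} direction $u\in T_0M$, robustly with respect to the unknown centering $m(h)$ and to the coupling between tangential and normal components of the exponential-map remainder $R(r,v)$. Once the polar-cap symmetrization is set up and the Jacobian and Taylor estimates of Propositions~\ref{geodesics} and~\ref{volumeform} are in hand, the remainder of the lemma follows by direct computation.
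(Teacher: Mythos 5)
Your proof is correct and rests on the same ingredients as the paper's: the block structure read off directly from the definition of $\Sigma(h)$, the ball inclusions of Lemma \ref{enoughweight}, and polar coordinates via Propositions \ref{injectivityradius}, \ref{geodesics} and \ref{volumeform}, with the bounds on $m_{\top}(h)$ and $m_{\perp}(h)$ obtained essentially as in the paper (inclusion in a ball around $\pi_M(X_1)$, the expansion $y = rv + R(r,v)$, and averaging). The one place where you genuinely deviate is how the unknown centering is handled in the eigenvalue bound: the paper uses the pointwise inequality $\langle u, rv+R(r,v)-m_{\top}\rangle^2 \ge \tfrac12\langle u, rv - m_{\top}\rangle^2 - 3\norm{R(r,v)}^2$ and integrates $\langle u, rv-m_{\top}\rangle^2$ over the whole ball $\mathcal{B}(0,r_h^-)\cap M$, exploiting the symmetry of the angular integral so that the cross term in $v$ vanishes and a term of order $(r_h^-)^{d+2}$ survives minus a curvature correction; you instead localize on two antipodal caps times an annulus and argue by pigeonhole that, whatever the value of $c=u^t m(h)$, at least one cap region stays at distance of order $h$ from $c$. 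Your variant is more elementary and robust (no exact cancellation over the full sphere is needed), at the price of less explicit constants: with your specific choices (cap threshold $1/2$, annulus $[h/(8\sqrt2),h/(4\sqrt2)]$) the curvature correction $r^2/(2\rho)$ is not negligible but only dominated when $h \le h_+$, so the separation constant must be shrunk rather than invoked "for small $h$"; this is the same smallness-in-$\rho$ bookkeeping the paper's final displayed inequality also relies on, hence a matter of constants and not a gap.
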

	\begin{proof}[\proofof Lemma \ref{expectations}]
	Let $x=y+z$ be in $B(X_1,h)$, with $y \in M$ and $\|z\| \leq s h$. Since $s \leq 1/4$, $\|y\| \leq 2h$. According to Proposition {\color{black}{\ref{ballprojection}}} combined with Proposition \ref{geodesics}, we may write, for $h \leq h_+$ and $y$ in $\mathcal{B}(X_1 ,2h) \cap {M}$,
      \[
      y = r v + R(r,v), 
      \]
      in local polar coordinates. Moreover, if $y \in \mathcal{B}(X_1,(1-s)h)$, then $x \in \mathcal{B}(X_1,h)$. Then, according to Proposition \ref{ballprojection}, we have  $\mathcal{B}(\pi_{{M}}(X_1),r_{3h/4}^-) \cap M \subset \mathcal{B}(X_1,(1-s)h) \cap M$. Let $u$ be a unit vector in $T_0 {M}$. Then $\left\langle u,x-m_{\top}(h) \right\rangle^2$ $=$ $\left\langle u,rv + R(r,v)+z-m_{\top}(h) \right\rangle^2$ $\geq $ $\left\langle u,rv -m_{\top}(h) \right\rangle^2 /2 - 3 (R(r,v)+z)^2$ $\geq$ $\left\langle u,rv -m_{\top}(h) \right\rangle^2 /2-6 r^4/(4\rho^2) - 6 s^2 h^2$ according to Proposition \ref{geodesics}. Hence we may write
      \begin{align*}
      \left\langle Au,u \right\rangle & =  \beta \int_{\mathcal{B}(X_1,h) \cap {M}} \left\langle u,rv + R(r,v)-m_{\top}(h) \right\rangle^2 J(r,v){f(r,v)}drdv \\
      & \geq \beta f_{min}  c_d \int_{r=0}^{r_{3h/4}^-} \int_{{\mathcal{S}}_{d-1}}{r^{d-1}\left [\left\langle u,rv-m_\top (h)\right\rangle^2 /2 - 3 r^4/(2\rho^2) -6 s^2 h^2 \right ]drdv},
      \end{align*}
      	according to Proposition \ref{volumeform} (bound on $J(r,v)$) and Proposition \ref{ballprojection} (the geodesic ball $\mathcal{B}_M(\pi_M(X_1),r_{3h/4}^-)$ is included in the Euclidean ball $\mathcal{B}(\pi_M(X_1),r_{3h/4}^-)$ $\subset$ $\mathcal{B}(X_1,(1-s)h) \cap {M}$). Then
      	\begin{align*}
      	\int_{r=0}^{r_{3h/4}^-}\int_{{\mathcal{S}}_{d-1}}{\frac{r^{d-1}\left\langle u,rv-m_\top (h)\right\rangle^2}{2} drdv} &\geq \int_{r=0}^{r_{3h/4}^-}\int_{{\mathcal{S}}_{d-1}}{\frac{r^{d-1}\left\langle u,rv\right\rangle^2}{2} drdv} \\ &= \frac{\sigma_{d-1}}{2d} \int_{r=0}^{r_{3h/4}^-} r^{d+1} dr \\ &= \frac{\sigma_{d-1}(r_{3h/4}^-)^{d+2}}{2d(d+2)}, 
      	\end{align*}
        where $\sigma_{d-1}$ denotes the surface of the $d-1$-dimensional unit sphere. On the other hand,
        \begin{align*}
        \int_{r=0}^{r_{3h/4}^-} \int_{{\mathcal{S}}_{d-1}}{\frac{3 r^{d+3}}{2\rho^2} + 6 s^2 h^2r^{d-1}drdv} = \sigma_{d-1}(r_{3h/4}^-)^{d+2} \left ( \frac{3 (r_{3h/4}^-)^{2}}{ 2(d+4)\rho^2} + \frac{6 s^2h^2}{d}\right ).
        \end{align*}
        Since $r_{3h/4}^- \leq h \leq h_+ \leq \rho/\sqrt{12d}$, we conclude that
        \begin{align*}
       \left\langle Au,u \right\rangle \geq \beta c_d f_{min} (r_{3h/4}^-)^{d+2} \geq \beta c_d f_{min} h^{d+2},
\end{align*}         	
      	since, for $d(X_1,M) \leq h/\sqrt{2}$ and $h \leq h_+$, $r_{3h/4}^- \geq r_{3h/4}/2 \geq h/8$, according to Proposition \ref{ballprojection}.
      	
      	Now, since for any $x=y+z \in \mathcal{B}(X_1,h)$, $y$ $\in$ $M \cap \mathcal{B}(0,2h)$ and $\|z\| \leq s h$, we have $\|y_{\perp}\| \leq 2h^2/\rho$, according to Proposition \ref{normalpart}. Jensen's inequality yields that $\|m(h)_{\perp}\| \leq 2h^2/\rho + s h$ and $\|m(h)_{\top}\| \leq \|m(h)\| \leq 2h$.
       \end{proof}

      The following Lemma \ref{Concentration} is devoted to quantify the deviations of empirical quantities such as local covariance matrices, means and number of points within balls from their deterministic counterparts. To this aim we define $N_0(h)$ and $N_1(h)$ as the number of points drawn from  respectively noise and signal in $\mathcal{B}(X_1,h) \cap M$, namely
     \begin{align*}
     N_0(h) &= \sum_{i \geq 2} U(X_i,h) (1-V_i), \\
     N_1(h) &= \sum_{i \geq 2} U(X_i,h) V_i.
     \end{align*}

	\begin{lem}\label{Concentration}
Recall that $h_0 = \left ( \kappa \frac{\log n}{\beta(n-1)} \right )^{1/(d+1)}$ (as defined page \pageref{def_h0}), and $h_{\infty} = h_0^{(d+1)/d}$, for $\kappa$ to be fixed later. 

If $h_0 \leq h_+$ and $d(X_1, M) \leq h_+ /\sqrt{2}$, then, with probability larger than $1- 4 \left(\frac{1}{n}\right)^{2/d +1}$, the following inequalities hold, for all $h \leq h_0$.
\[
\begin{array}{@{}ccc}
   \frac{N_0(h)}{n-1} & \leq &2 (1- \beta) q'(h) + \frac{10(2+2/d)\log n}{n-1},     \\
   \frac{N_1(h)}{n-1} & \leq &2 \beta q(2h) +  \frac{10(2+2/d)\log n}{n-1}. 
\end{array}
\]
Moreover, for all $(h_{\infty} \vee \sqrt{2} \dd(X_1,M)) \leq h \leq h_0$, and $n$ large enough,
      \begin{align*}
      \left \| \frac{1}{n-1}\sum_{i \geq 2} (X_i-m(h))_{\top}(X_i-m(h))_{\top}^{t}U(X_i,h)V_i - \Sigma(h) \right \|_\mathcal{F} &\leq C_d \frac{f_{max}}{f_{min}\sqrt{\kappa}} \beta q(2h) h^2 , \\
      \frac{1}{n-1}\left \| \sum_{i \geq 2}{(X_i - m(h))_{\top}U(X_i,h)V_i} \right \|_{\mathcal{F}} &\leq C_d \frac{f_{max}}{f_{min}\sqrt{\kappa}} \beta q(2h) h. 
      \end{align*}
\end{lem}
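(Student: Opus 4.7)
The plan is to establish each inequality by combining Bernstein-type concentration (for counts and centered empirical sums) with VC-type uniformity (for the slab class), then translating the bounds via the volume estimates of Lemma~\ref{enoughweight}.

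\textbf{Ball counts.} For a fixed $h \leq h_0$, $N_1(h)$ is a sum of $n-1$ i.i.d. Bernoullis of parameter $\beta q(h)$, so Bernstein's inequality yields
\[
\p\bigl(N_1(h) - (n-1)\beta q(h) \geq t\bigr) \leq \exp\!\left(-\frac{t^2/2}{(n-1)\beta q(h) + t/3}\right).
\]
Choosing $t = (n-1)\beta q(h) + \lambda \log(n-1)$ with $\lambda = 10(2+2/d)$ makes the right-hand side at most $(n-1)^{-(2/d + 3/2)}$ in both the variance- and range-dominated regimes, giving the claimed bound on $N_1(h)$; the argument for $N_0(h)$ is identical, since $(1-V_i)U(X_i,h)$ is a Bernoulli of parameter $(1-\beta) q'(h)$. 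To extend uniformly to all $h \leq h_0$, I would peel dyadically: $h\mapsto N_1(h)$ is non-decreasing in $h$, $h\mapsto \beta q(h)$ scales like $h^d$, and there are only $O(\log n)$ dyadic scales in $(0, h_0]$ before $(n-1)\beta q(h)$ falls below $\log(n-1)$, so a union bound is absorbed into the constant $\lambda$.

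\textbf{Slab counts.} Slabs in $\R^D$ form a VC class of fixed dimension $D_{VC}$ depending only on $D$ (they are intersections of $2D$ half-spaces up to reparametrization). A two-sided VC relative deviation inequality (e.g.\ Theorem~1.11 of Boucheron--Lugosi--Massart) gives, uniformly in all slabs $S_T$, with probability at least $1-(n-1)^{-(2/d + 3/2)}$,
\[
\left|\frac{|S_T\cap\{X_2,\ldots,X_n\}|}{n-1} - PS_T\right| \leq c_1\sqrt{\frac{PS_T\, D_{VC}\log(n-1)}{n-1}} + c_2\frac{D_{VC}\log(n-1)}{n-1},
\]
and the one-sided bounds with factors $2$ and $1$ follow from $2\sqrt{ab}\leq a + b$ taking $a = PS_T$.

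\textbf{Matrix and vector deviations, and main obstacle.} Write $Z_i = (X_i-m(h))_\top(X_i-m(h))_\top^t U(X_i,h)V_i$. On $\{U(X_i,h)V_i=1\}$ one has $X_i\in\mathcal{B}(X_1,h)\cap M$ and $\dd(X_1,M)\leq h/\sqrt 2$, so Lemma~\ref{expectations} combined with the triangle inequality gives $\|(X_i-m(h))_\top\|\leq Ch$, whence $\|Z_i\|_\mathcal{F}\leq C h^2$ and $\E\|Z_i\|_\mathcal{F}^2 \leq C h^4 \beta q(h)$. Bernstein applied entrywise to the $D\times D$ centered sum (with an overall $d$ factor in the Frobenius norm, since $Z_i$ has rank one and only $d\times d$ non-zero entries) gives
\[
\Bigl\|\tfrac{1}{n-1}\sum_{i\geq 2} Z_i - \Sigma(h)\Bigr\|_\mathcal{F} \leq C_d\sqrt{\frac{\beta q(h) h^4 \log(n-1)}{n-1}} + C_d \frac{h^2 \log(n-1)}{n-1}
\]
with probability at least $1-(n-1)^{-(2/d + 3/2)}$. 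For $h\geq h_\infty$, Lemma~\ref{enoughweight} ensures $(n-1)\beta q(h)\geq c_d f_{min}\kappa \log(n-1)$, so the range term in Bernstein is absorbed by the variance term. Factorizing
\[
\sqrt{\tfrac{\beta q(h) h^4 \log(n-1)}{n-1}} = \beta q(h) h^2 \cdot \sqrt{\tfrac{\log(n-1)}{(n-1) \beta q(h)}}
\]
and using $q(h)\leq C_d f_{max}h^d$ inside the square root produces the announced $\frac{f_{max}}{f_{min}\sqrt{\kappa}}\beta q(h) h^2$ bound; the empirical-mean statement is identical, with $(X_i-m(h))_\top U(X_i,h)V_i$ replacing $Z_i$ (range $\leq Ch$, variance proxy $\leq C h^2 \beta q(h)$). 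Dyadic peeling in $h$ on $[h_\infty\vee\sqrt 2\,\dd(X_1,M), h_0]$ then delivers the claimed uniformity. The main obstacle is this last matrix concentration: one must exploit that only tangential components $(X_i-m(h))_\top$ enter the sum so the summand range is of order $h$ rather than of order $K_0$, and one must check that Bernstein's range term is dominated by the variance term, which is precisely the content of the lower bound $h\geq h_\infty$ and explains why $h_\infty$ is defined at exactly that scale; the $f_{max}/f_{min}$ bookkeeping and the logarithmic peeling in $h$ are the remaining standard ingredients.
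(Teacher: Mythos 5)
Your fixed-$h$ analysis of the last two bounds is essentially sound (on $\{U(X_i,h)V_i=1\}$ the summands have range of order $h^2$ and variance proxy of order $\beta q(h)h^4$, and the lower bound $h\geq h_\infty$ together with $q(h)\geq c_df_{min}h^d$ is exactly what absorbs the Bernstein range term), and your treatment of the slab counts via a relative VC deviation bound is the same as the paper's. The genuine gap is the uniformity in $h$. For the covariance and mean deviations you assert that ``dyadic peeling in $h$ delivers the claimed uniformity,'' but unlike the counts $N_0(h),N_1(h)$, the statistic $\frac{1}{n-1}\sum_i (X_i-m(h))_\top(X_i-m(h))_\top^t U(X_i,h)V_i-\Sigma(h)$ is not monotone in $h$: the indicator $U(X_i,h)$, the centering $m(h)$ and the target $\Sigma(h)$ all move with $h$, so a union bound at dyadic points does not control intermediate $h$. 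Worse, interpolating between grid points spaced by a factor $2$ costs an error of order $\beta q(h)h^2$ (the annulus carries mass comparable to $q(h)$ itself, and $m(h)$ can shift by order $h$), i.e.\ the same order as the quantity you are bounding but \emph{without} the $1/\sqrt{\kappa}$ factor --- and that factor is precisely what is used afterwards to make $\|R_2\|_\mathcal{F}/\lambda_{\min}(A(h))\leq 1/4$ by taking $\kappa$ large. A grid argument could perhaps be salvaged with mesh ratio $1+c/\sqrt{\kappa}$ and explicit continuity estimates for $m(h)$ and $\Sigma(h)$, but none of this is in your proposal; this is why the paper instead controls a genuine supremum over $h$ (and over unit-Frobenius directions $T$) in each slice $[e^{-j}h_0,e^{-(j-1)}h_0]$, via symmetrization, sub-Gaussian maximal inequalities for $\mathbb{E}Z(u)$, Bousquet's version of Talagrand's inequality, and only then a union bound (peeling) over the $O(\log n)$ slices, the weight $r(h)=\beta q(h)h^2$ being comparable within a slice up to the $f_{max}/f_{min}$ factor that appears in the statement.

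A smaller instance of the same issue affects your first two inequalities: there monotonicity of $N_1(h)$ does help, but comparing $q$ at consecutive dyadic scales requires the two-sided density estimates of Lemma \ref{enoughweight}, which are unavailable when $h<\sqrt{2}\,\dd(X_1,M)$ (where $q(h)$ may even vanish), and even in the favorable regime the dyadic comparison degrades the stated constant $2$ to $2C_df_{max}/f_{min}$. The paper avoids this by applying the VC-type relative deviation bound (Theorem 5.1 of the Boucheron--Bousquet--Lugosi survey) to the nested family of balls $\{\mathcal{B}(X_1,h)\}_{h>0}$, which yields uniformity over all $h\leq h_0$ at once; you should do the same, or at least supply the missing interpolation argument.
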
  	
\begin{proof}[\proofof Lemma \ref{Concentration}]
The first two inequalities are straightforward applications of Theorem 5.1 in \cite{Boucheron05}. The proofs of the two last results are detailed below. They are based on Talagrand-Bousquet's inequality (see, e.g., Theorem 2.3 in \cite{Bousquet02}) combined with the so-called peeling device.

Define $h_-=(h_{\infty} \vee \sqrt{2} \dd(X_1,M))$, where we recall that in this analysis $X_1$ is fixed,
and let $f_{T,h}$ denote the function
\[
f_{T,h}(x,v) = \left\langle T,(x-m(h))_{\top}(x-m(h))_{\top}^{t} U(x,h)v \right\rangle,
\]
for $h_- \leq h \leq h_0$, $T$ a $d \times d$ matrix such that $\|T\|_{\mathcal{F}}=1$, $x$ in $\mathbb{R}^D$, $v$ in $\{0,1\}$, and $\left\langle T, B \right\rangle = \mathrm{trace}(T^t A)$, for any square matrices $T$ and $A$. Now we define the weighted empirical process
\[
Z = \sup_{T,h} \hspace{1ex} {\sum_{i \geq 2}{\frac{f_{T,h}(X_i,V_i) - \mathbb{E}f_{T,h}(X,V)}{r(h)}}},
\]
with $r(h) = \beta q(2h) h^2$, along with the constrained empirical processes 
\[
Z(u) = \sup_{T,h \leq u} \hspace{1ex} {\sum_{i \geq 2}{{f_{T,h}(X_i,V_i) - \mathbb{E}f_{T,h}(X,V)}}}, 
 \]
for $h_- \leq u \leq h_0$. Since $\| f_{T,h} \|_{\infty} \leq sup_{x \in M} \|x-m(h)\|^2 U(x,h) \leq 4h^2$, and 
 \[
 Var(f_{T,h}(X,V)) \leq \mathbb{E} \left ( \| X-m(h)\|^2 U(X,h) V \right ) \leq 16 \beta h^4 \mathbb{P}(VX \in \mathcal{B}(X_1,h) \leq 16 \beta h^4 \mathbb{P}(VY \in \mathcal{B}(X_1,2h),
 \]
 for $s \leq 1/4$, a direct application of Theorem 2.3 in \cite{Bousquet02} yields, with probability larger than $1-e^{-x}$,
  \[
  Z(u) \leq 3 \mathbb{E} Z(u) + \sqrt{\frac{32 \beta q(2u) u^4x}{n-1}} + \frac{20u^2x}{3(n-1)}.
  \]
  To get a bound on $\mathbb{E} Z(u)$, we introduce some independent Rademacher random variables $\sigma_2, \hdots, \sigma _n$, i.e. $\mathbb{P}(\sigma_j =1) = \mathbb{P}(\sigma_j=-1)=1/2$. With a slight abuse of notation, expectations with respect to the $(X_i,V_i)$'s and $\sigma_i$'s, $i=2, \hdots, n$, will be denoted by $\mathbb{E}_{(X,V)}$ and $\mathbb{E}_{\sigma}$ in what follows. According to the symmetrization principle (see, e.g., Lemma 11.4 in \cite{Massart13}), we have
  \begin{align*}
(n-1)\mathbb{E}Z(u) &\leq 2 \mathbb{E}_{(X,V)} \mathbb{E}_{\sigma_i} \sup _{h \leq u,T}  \sum_{i \geq 2} \left\langle T,\sigma_i V_i U (X_i,h) ((X_i - m(h))_{\top}(X_i - m(h))_{\top}^t) \right\rangle  \\
& \begin{multlined}[t] \leq 2 \mathbb{E}_{(X,V)} \mathbb{E}_{\sigma}  \sup_{h \leq u,T}   \sum_{i \geq 2} \sigma_i \left\langle V_i U(X_i,h) X_i X_i^t,T \right\rangle \\ +  2 \mathbb{E}_{(X,V)} \mathbb{E}_{\sigma}  \sup_{h \leq u,T}   \sum_{i \geq 2} \sigma_i \left\langle V_i U(X_i,h) X_i m(h)^t,T \right\rangle \\ +  2 \mathbb{E}_{(X,V)} \mathbb{E}_{\sigma}  \sup_{h \leq u,T}   \sum_{i \geq 2} \sigma_i \left\langle V_i U(X_i,h) m(h) X_i^t,T \right\rangle \\ + 2 \mathbb{E}_{(X,V)} \mathbb{E}_{\sigma}  \sup_{h \leq u,T}   \sum_{i \geq 2} \sigma_i \left\langle V_i U(X_i,h) m(h) m(h)^t,T \right\rangle
\end{multlined} \\
& := 2\mathbb{E}_{(X,V)}(E_1 + E_2 + E_3 + E_4).
\end{align*}
    For a fixed sequence $(X_i,V_i)$, $i=2, \hdots, n$, we may write
    \begin{align*}
    E_1 &\leq \begin{multlined}[t] 
    			\mathbb{E}_\sigma \sup_{h \leq u} \left ( \left \| \sum_{i \geq 2} \sigma_i V_i U(X_i,h) X_i X_i^t \right \|_{\mathcal{F}} - \mathbb{E}_{\sigma} \left \| \sum_{i \geq 2} \sigma_i V_i U(X_i,h) X_i X_i^t \right \|_{\mathcal{F}} \right ) 
    			\\+ \sup_{h \leq u} \mathbb{E}_{\sigma} \left \| \sum_{i \geq 2} \sigma_i V_i U(X_i,h) X_i X_i^t \right \|_{\mathcal{F}} 
    			\end{multlined}
    			 \\
     & := E_{11} + E_{12}.
\end{align*}     
Jensen's inequality ensures that
    \begin{align*}
E_{12} & \leq \sup_{h \leq u}\sqrt{\mathbb{E}_\sigma \left \| \sum_{i \geq 2} \sigma_i V_i U(X_i,h) X_i X_i^t \right \|_{\mathcal{F}}^2} \\
                   & \leq 4u^2\sqrt{N_1(u)},
\end{align*} 
hence
\[
\mathbb{E}_{(X,V)}  E_{12} \leq 4 u^2 \sqrt{\beta (n-1)q(2u)}.
\]
For the term $E_{11}$, note that, when $(X_i,V_i)_{i=2, \hdots,n}$ is fixed,  $$\sup_{h \leq u} \left ( \left \| \sum_{i \geq 2} \sigma_i V_i U(X_i,h) X_i X_i^t \right \|_{\mathcal{F}} - \mathbb{E}_{\sigma} \left \| \sum_{i \geq 2} \sigma_i V_i U(X_i,h) X_i X_i^t \right \|_{\mathcal{F}} \right )$$ is in fact a supremum of at most $N_1(u)$ processes. According to the bounded difference inequality (see, e.g., Theorem 6.2 of \cite{Massart13}), each of these processes is subGaussian with variance bounded by $16h^4N_1(u)$ (see Theorem 2.1 of \cite{Massart13}). Hence a maximal inequality for subGaussian random variables (see Section 2.5, p.31, of \cite{Massart13}) ensures that 

      \[
      E_{11} \leq 4 h^2 \sqrt{2 N_1(u) \log(N_1(u))} \leq 4 h^2 \sqrt{2 N_1(u) \log(n-1)}.
      \]
      Hence $\mathbb{E}_{(X,V)} E_{11} \leq 4h^2\sqrt{2 \beta (n-1) q(2u) \log(n-1)}$.
      $E_2$ may also be decomposed as
      \begin{align*}
      E_2 & = \mathbb{E}_{\sigma} \sup_{h \leq u} \left \| \left ( \sum_{i\geq 2} \sigma_i V_i U(X_i,h) X_i \right )m(h)^t \right \|_{\mathcal{F}} \\
         & \leq 2u  \mathbb{E}_{\sigma} \sup_{h \leq u} \left \| \sum_{i\geq 2} \sigma_i V_i U(X_i,h) X_i  \right \| \\
         & \leq 2u \left (\mathbb{E}_{\sigma} \sup_{h \leq u} \left (  \left \| \sum_{i\geq 2} \sigma_i V_i U(X_i,h) X_i  \right \| - \mathbb{E}_{\sigma}  \left \| \sum_{i\geq 2} \sigma_i V_i U(X_i,h) X_i  \right \| \right ) + \sup_{h \leq u} \mathbb{E}_{\sigma} \left \| \sum_{i\geq 2} \sigma_i V_i U(X_i,h) X_i  \right \| \right ) \\
         &:= 2u( E_{21} + E_{22} ).
\end{align*}             
   Jensen's inequality yields that $E_{22} \leq 2u \sqrt{N_1(u)}$, and the same argument as for $E_{11}$ (expectation of a supremum of $n-1$ subGaussian processes
with variance bounded by $4u^2N_1(u)$) gives $E_{22} \leq 2u \sqrt{2 N_1(u) \log(n-1)}$. Hence
\[
\mathbb{E}_{(X,V)} E_2 \leq 4u^2 \sqrt{\beta (n-1) q(2u)} \left ( \sqrt{2 \log(n-1)} +1 \right ).
\]
Similarly, we may write
\[
\mathbb{E}_{(X,V)} E_3 \leq 4u^2 \sqrt{\beta (n-1) q(u)} \left ( \sqrt{2 \log(n-1)} +1 \right ).
\]
At last, we may decompose $E_4$ as
\begin{align*}
E_4 & \leq \mathbb{E}_{\sigma} 4u^2 \sup_{h \leq u} \left | \sum_{i \geq 2} V_i U(X_i,h) \right | \\
    & \leq 4u^2\left [\mathbb{E}_{\sigma} \sup_{h \leq u} \left (\left | \sum_{i \geq 2} V_i U(X_i,h) \right | - \mathbb{E}_{\sigma}\left | \sum_{i \geq 2} V_i U(X_i,h) \right | \right ) + \sup_{h \leq u} \mathbb{E}_{\sigma}\left | \sum_{i \geq 2} V_i U(X_i,h) \right | \right ] \\
    & \leq 4u^2\sqrt{N_1(u)} \left ( \sqrt{2 \log(n-1)} +1 \right ),
\end{align*}
using the same argument. Combining all these terms leads to
\[
\mathbb{E} Z(u) \leq \frac{32 \sqrt{\beta q(2u)}}{\sqrt{n-1}} \left ( \sqrt{2 \log(n-1)} +1 \right ), 
\]
hence we get
\[
\mathbb{P} \left ( Z(u) \geq \frac{192 \sqrt{2}u^2 \sqrt{ \beta q(2u)\log(n-1)}}{\sqrt{n-1}} \left ( 1 + \frac{1}{48}\sqrt{\frac{x}{\log(n-1)}} \right ) + \frac{20 u^2 x}{n-1} \right ) \leq e^{-x}.
\]
To derive a bound on the weighted process $Z$, we make use of the so-called peeling device (see, e.g., Section 13.7, p.387, of \cite{Massart13}).
Set $p = \lceil \log(h_0/h_\infty) \rceil \leq 1 + \log(h_0/h_\infty)$, so that $e^{-{p}}h_0 \leq h_-$. According to Lemma \ref{enoughweight}, if $I_j$ denotes the slice $[e^{-j}h_0, e^{-(j-1)}h_0] \cap [h_-,h_0]$, then, for every $h$ in $I_j$, we have
\[
r(h) \geq r(h_{j-1}) c_d \frac{f_{min}}{f_{max}},
\]
where $c_d$ depends only on the dimension, provided that $h_0 \leq h_+$. Now we may write
\begin{align*}
\mathbb{P} &\left ( Z \geq \frac{192f_{max}\sqrt{2}}{f_{min}c_d \sqrt{ \beta q(2h_-)(n-1)}} \left ( 1 + \frac{1}{48} \sqrt{\frac{x + \log(p)}{n-1}} \right ) + \frac{20 f_{max} (x + \log(p)) }{(n-1) \beta c_{d} f_{min} q(2h_-)} \right ) 
\\
&\leq 
\sum_{j=1}^{p}\mathbb{P} \left ( \sup_{T,h \in I_j} \frac{\sum_{i\geq 2} f_{T,h}(X_i,V_i) - \mathbb{E}f_{T,h}(X,V)}{r(h)}
\right.
\\
&\hspace{20ex} \left. \geq \frac{192f_{max}\sqrt{2}}{f_{min}c_d \sqrt{ \beta q(2h_-)(n-1)}} \left ( 1 + \frac{1}{48}\sqrt{\frac{x + \log(p)}{n-1}} \right ) + \frac{20 f_{max} (x + \log(p)) }{(n-1) f_{min}c_d \beta q(2h_-)} \right ) 
\\
&\leq \sum_{j=1}^{p}{ \mathbb{P} \left ( Z(h_{j-1}) \geq \frac{192 \sqrt{2}r(h_{j-1})}{\sqrt{ \beta q(2h_-)(n-1)}}\left ( 1 + \frac{1}{48}\sqrt{\frac{x + \log(p)}{n-1}} \right ) + \frac{20 r(h_{j-1}) (x + \log(p)) }{(n-1)  \beta q(2h_-)} \right ) }. 
\end{align*}
Since $q(2h_{j-1}) \geq q(2h_-)$, we deduce that 
\begin{align*}
\mathbb{P} \left ( Z \geq \frac{192f_{max}\sqrt{2}}{f_{min}c_d \sqrt{\beta q(2h_-)(n-1)}} \left ( 1 + \frac{1}{48}\sqrt{\frac{x + \log(p)}{n-1}} \right ) + \frac{20 f_{max} (x + \log(p)) }{(n-1) c_{d} f_{min} \beta q(2h_-)} \right )  & \leq p e^{ -(x+ \log(p))} \\
& = e^{-x}.
\end{align*}
Now, according to Lemma \ref{enoughweight}, $\beta q(2h_-) \geq c_d \kappa \log n /(n-1)$. On the other hand, $p \leq 1 + \log(h_0/h_\infty) \leq \log(\beta(n-1)/\kappa)/d \leq \log n  /d$, for $\kappa \geq 1$. For $n$ large enough, taking $x = \left ( 1 + 2/d \right ) \log n$ in the previous inequality, we get
\[
\mathbb{P} \left(Z \geq C_d \frac{f_{max}}{f_{min} \sqrt{\kappa}} \right ) \leq \left ( \frac{1}{n} \right )^{1 + 2/d}.
\]
The last concentration inequality of Lemma \ref{Concentration} may be derived the same way, considering the functions
\[
g_{T,h}(x,v)  = \left\langle (x-m(h))U(x,h)v,T\right\rangle,
\]
where $T$ is an element of $\mathbb{R}^d$ satisfying $\|T\| \leq 1$.
\end{proof}

\subsection{Decluttering Rate}
 In this section we prove that, if the angle between tangent spaces is of order $h$, then we can distinguish between outliers and signal at order $h^2$. We recall that the slab $S(x,T,h)$ is the set of points $y$ such that $ \| \pi_{T}(y-x) \| \leq k_1 h$ and $\| \pi_{T^{\perp}}(y-x)\| \leq k_2h^2$, $k_1$ and $k_2$ defined in Lemma \ref{slabs}, and where $\pi_{T}$ denotes the orthogonal projection onto $T$.     
     
     \begin{lem}\label{universalthreshold}
        Recall that $h_0 = \left ( \kappa \frac{\log n}{\beta(n-1)} \right )^{1/(d+1)}$, and $h_{\infty} = h_0^{(d+1)/d}$. Let $K$ be fixed, and $k_1$, $k_2$ defined accordingly from Lemma \ref{slabs}. If $h_0 \leq h_+$, for $\kappa$ large enough (depending on $d$, $\rho$ and $f_{min}$) and $n$ large enough, there exists a threshold $t$ such that, for all $h_{\infty} \leq h \leq h_0$, we have, with probability larger than $1-3\left(\frac{1}{n}\right)^{2/d +1}$,        
        \[
        \begin{array}{@{}ccc}
         X_1 \in M \quad \mbox{and} \quad \angle \left(T,T_{X_1}M \right) \leq K h/\rho & \Rightarrow & |S(X_1,T,h) \cap \{ X_2, \hdots, X_n \}| \geq t(n-1)h^d, \\
        d(X_1,M) \geq h^2/\rho \quad \mbox{and} \quad \angle \left(T,T_{\pi (X_1)}M\right)  \leq K h/\rho & \Rightarrow & |S(X_1,T,h) \cap \{ X_2, \hdots, X_n \}| < t(n-1)h^d, \\
        d(X_1,M) \geq h/\sqrt{2} & \Rightarrow & |S(X_1,T,h) \cap \{ X_2, \hdots, X_n \}| < t(n-1)h^d.
        \end{array}
        \]
        \end{lem}
        
        \begin{proof}[\proofof Lemma \ref{universalthreshold}]
        Suppose that $d(X_1,M) \geq h/\sqrt{2} $. Then, according to Lemma \ref{slabs}, $S(X_1,T,h) \subset \mathcal{B}(X_1,h/2)$, with $\mathcal{B}(X_1, h/2) \cap M = \emptyset$, hence $P_n(S(X_1,T,h)) \leq P_n(\mathcal{B}(X_1,h/2))$. Theorem 5.1 in \cite{Boucheron05} yields that, for all $h_{\infty} \leq h \leq h_0$, with probability larger than $1-\left(\frac{1}{n}\right)^{2/d +1}$,
        \[
        P_n(\mathcal{B}(X_1, h/2)) \leq 2 P(\mathcal{B}(X_1, h/2)) + \frac{4 \left (2/d +1 \right ) \log(8n)}{n-1}.
        \]
        Since $\log(n)/(n-1) \leq \beta h^d/\kappa$, we may write
        \begin{align*}
        P_n(S(X_1,T,h)) & \leq 2 Q'(\mathcal{B}(X_1, h/2)) + \frac{4 \left (2/d +1 \right ) \log(8n)}{n-1} 
        \leq 2(1-\beta) \frac{h^D}{(2K_0)^D} + \frac{4 \left (2/d +1 \right ) \log(8n)}{n-1} \\
        &  \leq (1-\beta)C_{d,D,\rho,f_{min}} h^{d+1} + \frac{4 \left (2/d +1 \right ) \log(8n)}{n-1} \leq  h^d \left ( (1-\beta)C_{d,D,\rho,f_{min}} h + \frac{C_d \beta}{\kappa} \right ),
        \end{align*}
        for $n$ large enough so that $h \leq 1$.
        
        If $h/ \sqrt{2} \geq d(X_1,M) \geq h^2/\rho$ and $\angle \left (T_{\pi(X_1)} M, T \right ) \leq K h / \rho$, then Lemma \ref{slabs} provides a big slab $S'(x,T_{\pi(x)}M,h)$ so that 
$S(x,T,h) \subset S'(x,T_{\pi(x)}M,h)$ and $S'(x,T_{\pi(x)}M,h) \cap M = \emptyset$. Thus, $P_n(S(x,T,h)) \leq P_n(S'(x,T_{\pi(x)}M,h))$. An other application of Theorem 5.1 in \cite{Boucheron05} yields that, for all $h_{\infty} \leq h \leq h_0$, with probability larger than $1-\left(\frac{1}{n}\right)^{2/d +1}$,
       \[
       P_n(S'(x,T_{\pi(x)}M,h)) \leq 2 P(S'(x,T_{\pi(x)}M,h)) + \frac{4 \left (2/d +1 \right ) \log(8n)}{n-1}, 
       \]
       hence, denoting by $\omega_{r}$ the volume of the $r$-dimensional unit ball, we get
       \begin{align*}
       P_n(S(X_1,T,h)) &\leq 2 Q'(\mathcal{B}(X_1, h/2)) + \frac{4 \left (2/d +1 \right ) \log(8n)}{n-1} 
       \\
       &\leq \frac{2(1-\beta)\omega_d \omega_{D-d}}{K_0^D \omega_D}(k'_1h)^{d}(k'_2h^2)^{D-d} + \frac{4 \left (2/d +1 \right ) \log(8n)}{n-1} \\
             &\leq (1-\beta)C_{d,D,f_{min},\rho}h^{d+1} + \frac{4 \left (2/d +1 \right ) \log(8n)}{n-1} 
             \\
             &\leq  h^d \left ( (1-\beta)C_{d,D,\rho,f_{min}} h + \frac{C_d \beta}{\kappa} \right ),
\end{align*}             
           when $n$ is large enough.
           
           Now, if $X_1 \in M$ and $\angle \left (T_{\pi(X_1)} M, T \right ) \leq K h / \rho$, Lemma \ref{slabs} entails that $\mathcal{B}(X_1,k_3h) \cap M \subset S(X_1,T,h)$, hence $P_n(S(X_1,T,h)) \geq P_n(\mathcal{B}(X_1,k_3h) \cap M)$. A last application of Theorem 5.1 in \cite{Boucheron05} yields that, for all $h_{\infty} \leq h \leq h_0$, with probability larger than $1-\left(\frac{1}{n}\right)^{2/d +1}$,
           \[
           P_n(\mathcal{B}(X_1,k_3h) \cap M) \geq \frac{1}{2} P(\mathcal{B}(X_1,k_3h)) - \frac{2 \left (2/d +1 \right ) \log(8n)}{n-1}.
           \]
           Thus we deduce that
           \begin{align*}
           P_n(S(X_1,T,h))
           &\geq \frac{\beta}{2}Q(\mathcal{B}(X_1,k_3h))- \frac{2 \left (2/d +1 \right ) \log(8n)}{n-1}
           \\&\geq \frac{\beta}{2}q(k_3 h)- C_d \frac{\beta h^d}{\kappa}
           \\& \geq h^d  \left ( \beta c_{d,f_{min},\rho}  - C_d \frac{\beta}{\kappa} \right ),
           \end{align*}
           according to Lemma \ref{enoughweight} (since $k_3 \leq 1$). Choosing $\kappa$ large enough (depending on $d$, $\rho$ and $f_{min}$) and then $n$ large enough leads to the result.       
        \end{proof}
\section{Matrix Decomposition and Principal Angles}

In this section we expose a standard matrix perturbation result, adapted to our framework. For real symmetric matrices, we let $\mu_i(\cdot)$ denote their $i$-th largest eigenvalue and $\mu_{min}(\cdot)$ the smallest one. 
\begin{thm}[Sin $\theta$ theorem \cite{Davis70}, this version from Lemma 19 in \cite{Arias13}]\label{angledeviation}
      Let $O \in \R^{D\times D}$, $B\in \R^{d\times d}$ be positive semi-definite symmetric matrices such that 
      \[
      O = \left ( \begin{array}{c|c} 
       B & 0 \\
      \hline
      0 &  0  
\end{array} \right ) + E.
      \]
       Let $T_0$ (resp. $T$) be the vector space spanned by the first $d$ vectors of the canonical basis (resp. by the first $d$ eigenvectors of $O$). Then 
       \[
       \angle \left ( T_0, T \right ) \leq \frac{\sqrt{2}\|E\|_{op}}{\mu_{min}(B)}.
       \]
      \end{thm}

\section{Local PCA for Tangent Space Estimation and Decluttering}\label{localpcaresults}

This section is dedicated to the proofs of Section \ref{tangent_space_estimation_and_denoising_procedure}. We begin with the case of additive noise (and no outliers), that is Proposition \ref{tangent_space_rate_nonoise}.

\subsection{Proof of Proposition \ref{tangent_space_rate_nonoise}}
\label{proofofpropositiontangentspaceratenonoise}
 
Without loss of generality, the local PCA analysis will be conducted at base point $X_1$, the results on the whole sample then follow from a standard union bound. For convenience, we assume that $\pi_M(X_1) = 0$ and that $T_{0}{M}$ is spanned by the $d$ first vectors of the canonical basis of $\mathbb{R}^D$. 
We recall that $X_i = Y_i + Z_i$, with $Y_i \in M$ and $\|Z_i\| \leq s h$, for $s \leq 1/4$. In particular, $\|X_1\| \leq  \|Z_1\| \leq sh \leq h/4$.

We adopt the following notation for the local covariance matrix based on the whole sample $\X_n$.
   \[
   \begin{array}{@{}ccc}
     \hat{\Sigma}(h) &=& \frac{1}{n-1} \sum_{j \geq 2} (X_j - \bar{X}(h))(X_j - \bar{X}(h))^t U (X_i,h), \\
     \bar{X}(h) & = & \frac{1}{N(h)} \sum_{i \geq 2}{X_i U(X_i,h)}, \\
     N(h) &=& \sum_{i \geq 2}{U(X_i,h)}.
     \end{array}
     \]
     Note that the tangent space estimator $\mathtt{TSE}(\X_n,h)_1$ is the space spanned by the first $d$ eigenvectors of $\hat{\Sigma}(h)$. 
 From now on we suppose that all the inequalities of Lemma \ref{Concentration} are satisfied, defining then a global event of probability larger than $1-4 \left( \frac{1}{n} \right )^{2/d + 1}$. 
	 
	 We consider $h=h_0 \leq h_+$, so that Lemma \ref{enoughweight} and \ref{expectations} hold. We may then decompose the local covariance matrix as follows.
	 \begin{align}\label{decomposition}
        \hat{\Sigma}(h) & = \frac{1}{n-1}\sum_{i\geq 2} (X_i - m(h))(X_i-m(h))^t U(X_i,h) - \frac{N(h)}{n-1} ( \bar{X}(h) -m(h))(\bar{X}(h) - m(h))^t \notag \\        
        &:= \hat{\Sigma}_1+\hat{\Sigma}_2. 
        \end{align} 
     The first term may be written as
     \begin{align*}
     \hat{\Sigma}_1 &= \frac{1}{n-1}\sum_{i\geq 2} (X_i - m(h))(X_i-m(h))^t U(X_i,h) \\ &= \frac{1}{n-1}\sum_{i\geq 2} (X_i - m(h))_{\top}(X_i-m(h))_{\top}^t U(X_i,h)+ R_1
      \\ &= \Sigma(h) + R_1 + R_2,
     \end{align*}      
 where
      \[
      \Sigma(h) = \left (\begin{matrix} 
      A(h) & 0 \\
      0 & 0
      \end{matrix} \right ).
      \]
       According to Lemma \ref{expectations} (with $\beta=1$), $\mu_{min}(A(h)) \geq c_d f_{min} h^{d+2}$. On the other hand, using  Proposition \ref{normalpart} and Lemma \ref{expectations} we may write
       \begin{align*}
       (n-1)\|R_1\|_{\mathcal{F}}/N(h) &\leq 2 \sup_{y+z \in \mathcal{B}(X_1,h)}{\|(y+z-m(h))_{\top}\|\|(y+z-m(h))_{\perp}\|} 
       \\
       & \hspace{5ex}+ \sup_{y+z \in \mathcal{B}(X_1,h)}{\|(y+z-m(h))_{\perp}\|^2} \\
       &\leq 2 \sup_{y+z \in \mathcal{B}(X_1,h)}{\|(y+z-m(h))\|\left (\|(y-m(h))_{\perp}\| + s h \right )} 
              \\
       & \hspace{5ex}+ \sup_{y \in \mathcal{B}(0,2h) \cap M}{\left ( \|(y-m(h))_{\perp}\| + s h \right )^2} \\
       & \leq 8h \left ( \frac{4h^2}{\rho} + 2 s h \right ) +  \left ( \frac{4h^2}{\rho} + 2 s h \right )^2 \\ 
       &\leq \frac{34 h^3}{\rho} + 20 s h^2, 
       \end{align*}
         since $h \leq h_+$ and $s \leq 1/4$.
                  In addition, we can write
        \[
      R_2 =   \left (\begin{matrix} 
      R_2 & 0 \\
      0 & 0
      \end{matrix} \right ),
      \]
      with $\|R_2\|_{\mathcal{F}} \leq C_d \frac{f_{max}}{f_{min} \sqrt{\kappa}} q(2h) h^2$ according to Lemma \ref{Concentration} (with $\beta=1$).
      
      In turn, the term $\hat{\Sigma}_2$ may be decomposed as
      \begin{align*}
      \hat{\Sigma}_2 & = \left (\begin{matrix} 
      R_4 & 0 \\
      0 & 0
      \end{matrix} \right ) + R_3,
\end{align*} 
with
\begin{align*}
\|R_4\|_{\mathcal{F}} & \leq \frac{N(h)}{n-1} \|( \bar{X}(h) -m(h))_{\top} \| \|(\bar{X}(h) - m(h))\| \\
                     & \leq \frac{2h}{n-1}\left \| \sum_{i \geq 2} (X_i -m(h))_{\top} U(X_i,h) \right  \| \\
                     & \leq \frac{2C_d q(2h) h^2 f_{max}	}{f_{min} \sqrt{\kappa}},  
\end{align*}
  according to Lemma \ref{Concentration}. A similar bound on $R_3$ may be derived,
   \begin{align*}
   \|R_3\|_{\mathcal{F}} & \leq \frac{N(h)}{n-1} \|( \bar{X}(h) -m(h))_{\perp} \|(\bar{X}(h) - m(h))\| \\
   & \leq \frac{4h}{n-1} \left \| \sum_{i \geq 2} (Y_i + Z_i -m(h))_{\perp} U(X_i,h)  \right \| \\
   & \leq \frac{8hN(h) \left( 2h^2/\rho + s h \right )}{n-1} \\
  & \leq \frac{N(h) h^2}{n-1} \left ( \frac{16h}{\rho} + 8 s \right ),
   \end{align*}
   according to Proposition \ref{normalpart} and Lemma \ref{expectations}. If we choose $h = \left ( \kappa \frac{\log n}{n-1} \right )^{1/d}$, for $\kappa$ large enough (depending on $d$, $f_{min}$ and $f_{max}$), we have
   \begin{align*}
   \frac{\|R_2 + R_4\|_{\mathcal{F}}}{\mu_{min}(A(h))} \leq 1/4.
   \end{align*}
   Now, provided that $\kappa \geq 1$, according to Lemma \ref{Concentration}, we may write
   \[
   \frac{\|R_1 + R_3\|_{\mathcal{F}}}{\mu_{min}(A(h))} \leq K_{f_{max},f_{min},d} \left ( h/ \rho + s \right ),
   \]
   which, for $n$ large enough, leads to 
   \[
   \angle (T_0 M, \hat{T}_{X_1} M) \leq \sqrt{2} K_{f_{max},f_{min},d} \left (  h/ \rho + s \right ),
   \]
   according to Proposition \ref{angledeviation}. 
   
\subsection{Proof of Proposition \ref{noisefiltering}}

          The proof of Proposition \ref{noisefiltering} follows the same path as the derivation of Proposition \ref{tangent_space_rate_nonoise}, with some technical difficulties due to the outliers ($\beta < 1$). We emphasize that in this framework, there is no additive noise ($\sigma =0$).  
 As in the previous section, the analysis will be conducted for $X_1$ $\in$ $\X^{(k)}$, for some fixed $k \geq -1$, $k=-1$ referring to the initialization step. Results on the whole sample then follow from a standard union bound. As before, we assume that $\pi_{{M}}(X_1) = 0$ and that $T_{0}{M}$ is spanned by the $d$ first vectors of the canonical basis of $\mathbb{R}^D$. In what follows, denote by $\hat{t}$ the map from $\mathbb{R}^D$ to $\{0,1\}$ such that $\hat{t}(X_i)=1$ if and only if $X_i$ is in $\X^{(k)}$.
      
   We adopt the following notation for the local covariance matrix based on $\X^{(k)}$ (after $k+1$ iterations of the outlier filtering procedure).
   \[
   \begin{array}{@{}ccc}
     \hat{\Sigma}^{(k)}(h) &=& \frac{1}{n-1} \sum_{j \geq 2} (X_j - \bar{X}(h)^{(k)})(X_j - \bar{X}(h)^{(k)})^t U (X_i,h) \hat{t}(X_i), \\
     \bar{X}^{(k)}(h) & = & \frac{1}{N^{(k)}(h)} \sum_{i \geq 2}{X_i U(X_i,h) \hat{t}(X_i)}, \\
     N^{(k)}(h) &=& \sum_{i \geq 2}{U(X_i,h) \hat{t}(X_i)}.
     \end{array}
     \]
     Also recall that we define $N_0(h)$ and $N_1(h)$ as the number of points drawn from  respectively clutter and signal in $\mathcal{B}(X_1,h) \cap M$ (based on the whole sample $\X_n$).
	At last, we suppose that all the inequalities of Lemma \ref{Concentration} and Lemma \ref{universalthreshold} are satisfied, defining then a global event of probability larger than $1-7 \left( \frac{1}{n} \right )^{2/d + 1}$.
	 
	 We recall that we consider  $h_\infty \leq h \leq h_{k}$,  $k \geq -1$ (with $h_{-1} = h_0$), and $X_1$ in $\X^{(k)}$ such that $\dd(X_1,M) \leq h/\sqrt{2}$. We may then decompose the local covariance matrix as
	 \begin{align}\label{decomposition2}
        \hat{\Sigma}^{(k)}(h)& = \frac{1}{n-1}\sum_{i\geq 2} (X_i - m(h))(X_i-m(h))^t U(X_i,h) \hat{t}(X_i) 
               \\
       & \hspace{15ex}
       - \frac{N^{(k)}(h)}{n-1} ( \bar{X}^{(k)}(h) -m(h))(\bar{X}(h)^{(k)} - m(h))^t \notag \\
        & = \begin{multlined}[t] \frac{1}{n-1}\sum_{i\geq 2} (X_i - m(h))(X_i-m(h))^t U(X_i,h) \hat{t}(X_i)V_i (X_i) 
               \\
       + \frac{1}{n-1}\sum_{i\geq 2} (X_i - m(h))(X_i-m(h))^t U(X_i,h) (1-V_i) \hat{t}(X_i) \\ - \frac{N^{(k)}(h)}{n-1} ( \bar{X}(h)^{(k)} -m(h))(\bar{X}(h)^{(k)} - m(h))^t,
        \end{multlined}\notag \\
        &:= \hat{\Sigma}^{(k)}_1+\hat{\Sigma}^{(k)}_2+\hat{\Sigma}^{(k)}_3. 
        \end{align}         
        The proof of Proposition \ref{noisefiltering} will follow by induction.
  
   \noindent\textbf{Initialization step ($k=-1$)}:         

    In this case $\X^{(k)} = \X_n$, $h=h_0$, $\dd(X_1,M) \leq h_0/\sqrt{2}$, and $\hat{t}$ is always equal to $1$. Then the first term $\hat{\Sigma}_1^{(k)}$ of \eqref{decomposition2} may be written as
     \begin{align*}
     \frac{1}{n-1}\sum_{i\geq 2} (X_i - m(h))(X_i-m(h))^t U(X_i,h) V_i &= \frac{1}{n-1}\sum_{i\geq 2} (X_i - m(h))_{\top}(X_i-m(h))_{\top}^t U(X_i,h) V_i + R_1 \\
      &= \Sigma(h) + R_1 + R_2,
     \end{align*}      
 where
      \[
      \Sigma(h) = \left (\begin{matrix} 
      A(h) & 0 \\
      0 & 0
      \end{matrix} \right ).
      \]
       According to Lemma \ref{expectations}, $\mu_{min}(A(h)) \geq c_d f_{min} \beta h^{d+2}$, and $\|R_1\|_{\mathcal{F}} \leq 34 \frac{N_1(h) h^3	}{\rho(n-1)}$ according to Proposition \ref{normalpart}. Moreover, we can write
      \[
      R_2 =   \left (\begin{matrix} 
      R_2 & 0 \\
      0 & 0
      \end{matrix} \right ),
      \]
      with $\|R_2\|_{\mathcal{F}} \leq C_d \frac{f_{max}}{f_{min} \sqrt{\kappa}} \beta q(2h) h^2$ according to Lemma \ref{Concentration}.
       
    Term $\hat{\Sigma}^{(k)}_2$ in inequality \eqref{decomposition2} may be bounded by
   \begin{align*}
      \|\hat{\Sigma}^{(k)}_2\|_{\mathcal{F}} \leq \frac{16 h^2 N_0(h)}{n-1}.
\end{align*}       
In turn, term $\hat{\Sigma}^{(k)}_3$ may be decomposed as
\begin{align*}
\frac{N^{(k)}(h)}{n-1} ( \bar{X}(h)^{(k)} -m(h))(\bar{X}(h)^{(k)} - m(h))^t &=  \left (\begin{matrix} 
      R_6 & 0 \\
      0 & 0
      \end{matrix} \right ) + R_5,
\end{align*} 
with
\begin{align*}
\|R_6\|_{\mathcal{F}} & \leq \frac{N(h)^{(k)}}{n-1} \|( \bar{X}(h)^{(k)} -m(h))_{\top} \| \|(\bar{X}(h)^{(k)} - m(h))\| \\
                     & \leq \frac{4h}{n-1}\left (\| \sum_{i \geq 2} (X_i -m(h))_{\top} U(X_i,h) V_i \| + \| \sum_{i \geq 2} (X_i -m(h))_{\top} U(X_i,h) (1-V_i) \| \right ) \\
                     & \leq \frac{4C_d \beta q(2h) h^2 f_{max}	}{f_{min} \sqrt{\kappa}} + \frac{16 h^2 N_0(h)}{n-1},  
\end{align*}
   according to Lemma \ref{Concentration}. We may also write
   \begin{align*}
   \|R_5\|_{\mathcal{F}} & \leq \frac{N(h)^{(k)}}{n-1} \|( \bar{X}(h)^{(k)} -m(h))_{\perp} \| \|(\bar{X}(h)^{(k)} - m(h))\| \\
   & \leq \frac{4h}{n-1} \left (\| \sum_{i \geq 2} (X_i -m(h))_{\perp} U(X_i,h) V_i \| + \| \sum_{i \geq 2} (X_i -m(h))_{\perp} U(X_i,h)(1-V_i) \| \right ) \\
  & \leq \frac{16N_1(h) h^3}{(n-1) \rho} + \frac{16 N_0(h) h^2 }{(n-1)},
   \end{align*}
   according to Proposition \ref{normalpart} and Lemma \ref{expectations}. As in the additive noise case (see proof of Proposition \ref{tangent_space_rate_nonoise}), provided that $\kappa$ is large enough (depending on $d$, $f_{min}$, and $f_{max}$), we have
   \[
   \frac{\|R_2 + R_6\|_{\mathcal{F}}}{\mu_{min}(A(h))} \leq 1/4.
   \]
   Since $(n-1)h_0^{d} =\frac{\kappa \log n}{\beta h}$, if we ask $\kappa \geq \rho$, then for $n$ large enough we eventually get
   \[
   \frac{\|\hat{\Sigma}^{(k)}_2 + R_1 + R_5\|_{\mathcal{F}}}{\mu_{min}(A(h))} \leq K_{d,f_{min},f_{max},\beta} \frac{h_0}{\rho},
   \]
   according to Lemma \ref{Concentration}. Then, Proposition \ref{angledeviation} can be applied to obtain 
   \[
    \angle ( \mathtt{TSE}(\X^{(-1)},h_{0})_1, T_{\pi(X_1)} {M} )\leq \sqrt{2} K^{(0)}_{d,f_{min},f_{max},\beta} h_0/\rho.
   \]
   According to Lemma \ref{universalthreshold}, we may choose $\kappa$ large enough (with respect to $K=\sqrt{2} K^{(0)}$, $d$, $f_{min}$ and $\rho$) and then a threshold $t$ so that, if $X_1 \in M$, then $X_1 \in \X^{(0)}$, and if $d(X_1,M) \geq h_0^2/ \rho$, then $X_1 \notin \X^{(0)}$.      
   
   \noindent\textbf{Iteration step}  
   Now we assume that $k \geq 0$, and that $\dd(X_i,M) \geq h_k^2/\rho$ implies $\hat{t}(X_i)=0$, with $h_k = \left ( \kappa \frac{\log n}{\beta(n-1)} \right )^{\gamma_k}$, $\gamma_k$ being between $1/(d+1)$ and $1/d$. Let $h_{\infty} \leq h \leq h_k$, and suppose that $\dd(X_1,M) \leq h_k/\sqrt{2}$. As in the initialization step, $\hat{\Sigma}_1^{(k)}$ may be written as
\begin{align*}
\left (\begin{matrix} 
      A(h) & 0 \\
      0 & 0
      \end{matrix} \right ) + R_1 + R_2,
\end{align*}   
with $\mu_{min}(A(h)) \geq c_d f_{min} \beta h^{d+2}$, $\|R_1\|_{\mathcal{F}} \leq 34 \frac{N_1(h) h^3	}{\rho(n-1)}$, and $\|R_2\|_{\mathcal{F}} \leq C_d \frac{f_{max}}{f_{min} \sqrt{\kappa}} \beta q(2h) h^2$.
  
   We can decompose $\hat{\Sigma}_2$ as 
   \begin{align*}
      &\frac{1}{n-1}\sum_{i\geq 2} (X_i - m(h))(X_i-m(h))^t U(X_i,h)(1-V_i) \hat{t}(X_i) \\
      \hspace{3ex}& = \frac{1}{n-1}\sum_{i\geq 2} (X_i - m(h))_{\top}(X_i-m(h))_{\top}^t U(X_i,h) (1-V_i) \hat{t}(X_i) + R_3 \\
       & = \left (\begin{matrix} 
      R_4 & 0 \\
      0 & 0
      \end{matrix} \right ) + R_3,
\end{align*}       
with $\|R_4\|_{\mathcal{F}} \leq \frac{16N_0(h) h^2}{n-1}$ and $\|R_3\| \leq \frac{128 N_0(h) h h_k^2}{(n-1)\rho}$, according to Proposition \ref{normalnoise}, for $n$ large enough so that $h_0^2/\rho \leq h_{\infty}$. Term $\hat{\Sigma}^{(k)}_3$ may also be written as
\begin{align*}
\frac{N(h)^{(k)}}{n-1} ( \bar{X}(h)^{(k)} -m(h))(\bar{X}(h)^{(k)} - m(h))^t &=  \left (\begin{matrix} 
      R_6 & 0 \\
      0 & 0
      \end{matrix} \right ) + R_5,
\end{align*} 
with
\begin{align*}
\|R_6\|_{\mathcal{F}} & \leq \frac{N(h)^{(k)}}{n-1} \|( \bar{X}(h)^{(k)} -m(h))_{\top}\| \|(\bar{X}(h)^{(k)} - m(h))\| \\
                     & \leq \frac{4h}{n-1}\left (\| \sum_{i \geq 2} (X_i -m(h))_{\top} U(X_i,h) V_i \| + \| \sum_{i \geq 2} (X_i -m(h))_{\top} U(X_i,h) (1-V_i)\hat{t}(X_i) \| \right ) \\
                     & \leq \frac{4C_d \beta q(2h) h^2 f_{max}	}{f_{min} \sqrt{\kappa}} + \frac{16h^2 N_0(h)}{(n-1)},  
\end{align*}
   according to Lemma \ref{Concentration}. We may also write
   \begin{align*}
   \|R_5\|_{\mathcal{F}} & \leq \frac{N(h)^{(k)}}{n-1} \|( \bar{X}(h)^{(k)} -m(h))_{\perp} \|\|(\bar{X}(h)^{(k)} - m(h))\| \\
   & \leq \frac{4h}{n-1} \left (\| \sum_{i \geq 2} (X_i -m(h))_{\perp} U(X_i,h) V_i \| + \| \sum_{i \geq 2} (X_i -m(h))_{\perp} U(X_i,h)(1-V_i)\hat{t}(X_i) \| \right ) \\
  & \leq \frac{16 N_1(h) h^3}{(n-1) \rho} + \frac{32 N_0(h) h h_k^2 }{\rho(n-1)},
   \end{align*}
   according to Proposition \ref{normalpart}, Proposition \ref{normalnoise} and Lemma \ref{expectations}. As done before, we may choose $\kappa$ large enough (depending on $d$, $f_{min}$ and $f_{max}$, but not on $k$) such that
   \[
   \frac{\|R_2 + R_4 + R_6\|_{\mathcal{F}}}{\mu_{min}(A(h))} \leq 1/4.
   \]  
    Now choose $h = h_{k+1} = \left  ( \kappa \frac{\log n}{\beta(n-1)} \right )^{(2\gamma_k +1)/(d+2)}$, with $\kappa \geq 1$. This choice is made to optimize residual terms of the form $h/\rho + h_k^2 N_0(h)/h$ coming from ${\| R_1 + R_3 + R_5 \|_{\mathcal{F}}}/{\mu_{min}(A(h_{k+1}))}$. Then we get, according to Lemma \ref{Concentration},
    \begin{align}
    \frac{\| R_1 + R_3 + R_5 \|_{\mathcal{F}}}{\mu_{min}(A(h_{k+1}))} & \leq C_d \frac{f_{max}h_{k+1}}{\rho f_{min}} + \frac{C'_d}{\beta \rho f_{min}} \left ( \kappa \frac{\log n}{\beta(n-1)} \right )^{ \gamma_{k+1} + 2 \gamma_k - (2\gamma_k +1) + 1} \label{biasvariance}\\
     & \leq K_{d,f_{min},f_{max},\beta} \frac{h_{k+1}}{\rho}, \notag
    \end{align}
    where again, $K_{d,f_{min},f_{max},\beta}$ does not depend on $k$.
    At last, we may apply Proposition \ref{angledeviation} to get
    \begin{align*}
    \angle ( \mathtt{TSE}(\X^{(k)},h_{k+1})_1, T_{\pi(X_1)} {M} ) 
    &\leq 
    \sqrt{2} K_{d,f_{min},f_{max},\beta} h_{k+1}/\rho 
    \\
    &\leq 
    \sqrt{2} \left( K_{d,f_{min},f_{max},\beta} \vee K^{(0)}_{d,f_{min},f_{max},\beta}\right) h_{k+1}/\rho
    \\
    &:= C_{d,\beta,f_{max},f_{min}} h_{k+1}/\rho.
    \end{align*}
  
Then, according to Lemma \ref{universalthreshold}, we may choose $\kappa$ large enough (not depending on $k$) and $t$ (not depending on $k$ either) so that if $X_1 \in M$, then $X_1 \in \X^{(k+1)}$, and if $d(X_1,M) \geq h_{k}^2/\rho$, then $X_1 \notin \X^{(k+1)}$.      
   Proposition \ref{noisefiltering} then follows from a straightforward union bound on the sample $\{X_1, \hdots, X_n\}$.

\subsection{Proof of Proposition \ref{infinite_denoising_estimator}}

In this case, we have $\dd(X_j,M) \leq h_\infty^2/\rho$, for every $X_j$ in $\X^{(\hat{k})}$. The proof of Proposition \ref{infinite_denoising_estimator} follows from the same calculation as in the proof of Proposition \ref{noisefiltering}, replacing $h_k^2/\rho$ by its upper bound $h_{\infty}^2/\rho$ and taking $h_{k+1} = h_{\infty}$ in the iteration step.
        
\section{Proof of the Main Reconstruction Results}
{\color{black}
We now prove main results Theorem \ref{main_result_noob_nonoise} (additive noise model), and Theorems \ref{main_result_noob_noise} and \ref{theoretical_rate} (clutter noise model).
}
\subsection{Additive Noise Model}

\begin{proof}[\proofof Corollary \ref{recap_nonoise}]
Let $ Q \in \mathcal{G}_{D,d,f_{min},f_{max},\rho,\sigma}$. 
Write $\varepsilon = c_{d,f_{min},f_{max}} (h \vee \rho \sigma/h)$ for $c_{d,f_{min},f_{max}}$ large enough, an consider the event $A$ defined by
\[
A = 
\begin{multlined}[t]
\left\{ 
	\max_{X_j \in \X_n} \angle ( T_{\pi_M(X_j)} {M} , \hat{T}_j(h)) 
	\leq 
	C_{d,f_{min},f_{max}} \left( \frac{h}{\rho} + \frac{\sigma}{h} \right)
\right\} 
\cap 
\left\{ 
	\sup_{x \in M} d(x,\X_n) \leq \sigma
\right\}
\\
\cap
\left\{ 
	\sup_{X_j \in \X_n} d(X_j,M) 
	\leq 
	C_{d,f_{min}} \left( \frac{\log n}{n} \right)^{1/d}
\right\}
.
\end{multlined}
\]
Then from Proposition \ref{tangent_space_rate_nonoise} and Lemma \ref{epsilon_rate}, $\p_Q(A) \geq 1 - 5 \left( \frac{1}{n} \right)^{2/d}$, and from the definition of $\varepsilon$ and the construction of $\Y_n$, for $n$ large enough,
\begin{align*}
A 
&\subset 
\begin{multlined}[t]
\left\{ 
	\max_{X_j \in \X_n} \angle ( T_{\pi_M(X_j)} {M} , \hat{T}_j(h)) 
	\leq
	\frac{\varepsilon}{2280 \rho}
\right\}
\cap
\left\{
	\sup_{x \in M} d(x,\X_n) \leq \varepsilon
\right\}
\\
\cap
\left\{ 
	\sup_{X_j \in \X_n} d(X_j,M) \leq \frac{\varepsilon^2}{1140 \rho}
\right\}
\end{multlined}
\\
&\subset
\begin{multlined}[t]
\left\{ 
	\max_{X_j \in \Y_n} \angle ( T_{\pi_M(X_j)} {M} , \hat{T}_j(h)) 
	\leq
	\frac{\varepsilon}{2280 \rho}
\right\}
\cap
\left\{
	\sup_{x \in M} d(x,\Y_n) \leq 2\varepsilon
\right\}
\\
\cap
\left\{
	\Y_n \text{~is~} \varepsilon \text{-sparse}
\right\}
\cap
\left\{ 
	\sup_{X_j \in \Y_n} d(X_j,M) \leq \frac{\varepsilon^2}{1140 \rho}
\right\}
,
\end{multlined}
\end{align*}
which yields the result.
\end{proof}

\begin{proof}[\proofof Theorem \ref{main_result_noob_nonoise}]
Following the above notation, we observe that on the event $A$, Theorem \ref{tdc_stability} holds for $\varepsilon = c_{d,f_{min},f_{max}} (h \vee \rho \sigma/h)$, $\theta = \varepsilon/(1140 \rho)$ (where we used that $\theta \leq 2 \sin \theta$) and $\eta = \varepsilon^2/(1140 \rho)$ with high probability, so that the first part of Theorem \ref{main_result_noob_nonoise} is proved.
Furthermore, for $n$ large enough, 
\begin{align*}
\E_Q\left[ \dd_{H}\left(M,\hat{M}_{\mathtt{TDC}}\right)\right]
	&=  \E_Q\left[ \dd_{H}\left(M,\hat{M}_{\mathtt{TDC}}\right)\mathbbm{1}_A\right]
			+
		\E_Q\left[ \dd_{H}\left(M,\hat{M}_{\mathtt{TDC}}\right)\mathbbm{1}_{A^c}\right] 
	\\
	&\leq 
	C_d \frac{\varepsilon^2}{\rho}
	+
	\left(1 - \p_Q(A) \right)\left( \mathrm{diam}(M) + \sigma \right) 
	\\
	&\leq C'_{d,f_{min},f_{max},\rho} \varepsilon^2,
\end{align*}
where for the last line we used the diameter bound of Lemma \ref{model_properties}.
\end{proof}

\subsection{Clutter Noise Model}

\begin{proof}[\proofof Corollary \ref{recap_noise}]
Let $ P \in \mathcal{O}_{D,d,f_{min},f_{max},\rho,\beta}$. 
For $n$ large enough, write $\varepsilon = c_{d,f_{min}f_{max}} h_{k_\delta}$ for $c_{d,f_{min}f_{max}}$ large enough,
and consider the event
\[
A^\delta 
= 
\begin{multlined}[t]
\left\{ 
	\max_{X_j \in \X^{(k_\delta)}} \angle ( T_{\pi_M(X_j)} {M} , \hat{T}^\delta_j) 
	\leq 
	C_{d,f_{min},f_{max}} \frac{h_{k_\delta}}{\rho}	
	\right\} 
\cap 
\left\{ 
	\sup_{x \in M} d(x,\X^{(k_\delta)}) \leq \frac{h_{k_\delta}^2}{\rho}
\right\}
\\
\cap
\left\{ 
	\sup_{X_j \in \X^{(k_\delta)}} d(X_j,M)
	\leq 
	C_{d,f_{min}} \left( \frac{\log n}{n} \right)^{1/d}
\right\}
.
\end{multlined}
\]
From Proposition \ref{noisefiltering} and Lemma \ref{epsilon_rate}, $\p_P\bigl(A^\delta\bigr) \geq 1- 8 \left( \frac{1}{n} \right)^{2/d}$and from the definition of $\varepsilon$ and the construction of $\Y_n^\delta$, for $n$ large enough,

\begin{align*}
A^\delta
&\subset 
\begin{multlined}[t]
\left\{ 
	\max_{X_j \in \X^{(k_\delta)}} \angle ( T_{\pi_M(X_j)} {M} , \hat{T}_j^\delta) 
	\leq
	\frac{\varepsilon}{2280 \rho}
\right\}
\cap
\left\{
	\sup_{x \in M} d(x,\X^{(k_\delta)}) \leq \varepsilon
\right\}
\\
\cap
\left\{ 
	\sup_{X_j \in \X^{(k_\delta)}} d(X_j,M) \leq \frac{\varepsilon^2}{1140 \rho}
\right\}
\end{multlined}
\\
&\subset
\begin{multlined}[t]
\left\{ 
	\max_{X_j \in \Y_n^\delta} \angle ( T_{\pi_M(X_j)} {M} , \hat{T}_j^\delta) 
	\leq
	\frac{\varepsilon}{2280 \rho}
\right\}
\cap
\left\{
	\sup_{x \in M} d(x,\Y_n^\delta) \leq 2\varepsilon
\right\}
\\
\cap
\left\{
	\Y_n \text{~is~} \varepsilon \text{-sparse}
\right\}
\cap
\left\{ 
	\sup_{X_j \in \Y_n^\delta} d(X_j,M) \leq \frac{\varepsilon^2}{1140 \rho}
\right\}
,
\end{multlined}
\end{align*}
which yields the result.

\end{proof}

\begin{proof}[\proofof Theorem \ref{main_result_noob_noise}]
Following the above notation, we observe that on the event $A^\delta$, Theorem \ref{tdc_stability} holds for $\varepsilon = c_{d,f_{min}f_{max}} h_{k_\delta}$, $\theta = \varepsilon/(1140 \rho)$ and $\eta = \varepsilon^2/(1140 \rho)$,
so that the first part of Theorem \ref{main_result_noob_noise} is proved.
As a consequence, for $n$ large enough, 
\begin{align*}
\E_P\left[ \dd_{H}\left(M,\hat{M}_{\mathtt{TDC \delta}}\right)\right]
	&=
	\E_P\left[ \dd_{H}\left(M,\hat{M}_{\mathtt{TDC \delta}}\right)\mathbbm{1}_{A^\delta}\right]
	+
	\E_P\left[ \dd_{H}\left(M,\hat{M}_{\mathtt{TDC \delta }}\right)\mathbbm{1}_{\left(A^{\delta}\right)^c}\right]
	\\
	&\leq 
	C_d\frac{\varepsilon^2}{\rho}
	+
	\left(1- \p_P\bigl(A^{\delta}\bigr)\right) \times 2K_0
	\\
	&\leq C'_{d,f_{min},f_{max},\rho} \varepsilon^2,
\end{align*}
where for the second line we used the fact that $M \cup \hat{M}_\mathtt{TDC \delta } \subset \mathcal{B}_0$, a ball of radius $K_0 = K_0(d,f_{min},\rho)$.
\end{proof}

Finally, Theorem \ref{theoretical_rate} is obtained similarly using Proposition \ref{infinite_denoising_estimator}.


\end{document}